\documentclass[11pt]{amsart}

\usepackage[T1]{fontenc}
\usepackage[utf8]{inputenc}
\usepackage[english]{babel}
\usepackage{lmodern}

\usepackage{amsmath,amssymb,mathtools}
\usepackage{mathrsfs}
\usepackage{bm}

\usepackage{enumitem}
\usepackage{microtype}

\usepackage[hidelinks]{hyperref}
\usepackage[nameinlink,noabbrev]{cleveref}

\newtheorem{theorem}{Theorem}[section]
\newtheorem{proposition}[theorem]{Proposition}
\newtheorem{lemma}[theorem]{Lemma}
\newtheorem{corollary}[theorem]{Corollary}

\theoremstyle{definition}
\newtheorem{definition}[theorem]{Definition}

\theoremstyle{remark}
\newtheorem{remark}[theorem]{Remark}
\newtheorem{example}[theorem]{Example}

\newtheorem*{theoremA}{Theorem A}
\newtheorem*{theoremB}{Theorem B}
\newtheorem*{theoremC}{Theorem C}
\newtheorem*{factorizationlemma}{Lemma}

\newcommand{\C}{\mathbb C}
\newcommand{\R}{\mathbb R}
\newcommand{\T}{\mathbb T}
\newcommand{\Z}{\mathbb Z}

\newcommand{\one}{\mathbf 1}
\newcommand{\norm}[1]{\left\lVert #1\right\rVert}
\newcommand{\abs}[1]{\left\lvert #1\right\rvert}

\newcommand{\fhat}{\widehat{\!f}}
\newcommand{\supp}{\operatorname{supp}}

\newcommand{\Ind}{\operatorname{Ind}}

\title[Norm-Controlled Inversion in Measure Algebras]{Norm-Controlled Inversion in Measure Algebras}
\author{Przemys{\l}aw Ohrysko}
\address{University of Warsaw\\
Faculty of Mathematics, Informatics and Mechanics\\
Institute of Mathematics\\
Banacha 2, 02-097 Warsaw, Poland}
\email{P.Ohrysko@mimuw.edu.pl}
\date{September 2026}

\subjclass[2020]{Primary 43A10; Secondary 43A25, 46J10, 47B35}
\keywords{Measure algebra, Fourier algebra, Wiener algebra, norm-controlled inversion, Toeplitz operator, Wiener--Pitt phenomenon}

\begin{document}

\begin{abstract}
We solve Nikolski's norm-controlled inversion problem for measure algebras of
locally compact Abelian groups. For every $\delta>1/2$ there is a constant
$C_M(\delta)$, independent of the group $G$, such that
$\|\mu\|_{M(G)}\le1$ and
$\inf_{\gamma\in\widehat G}|\widehat\mu(\gamma)|\ge\delta$ imply that $\mu$ is
invertible in $M(G)$ and
$\|\mu^{-1}\|_{M(G)}\le C_M(\delta)$. Together with Nikolski's negative results,
this shows that $1/2$ is the sharp universal threshold. The main step is a
uniform inversion theorem for Fourier algebras $A(K)$ of compact Abelian groups;
its proof is based on a profile decomposition and a resolvent argument. We also obtain norm-controlled inversion for Toeplitz operators with non-vanishing
Wiener symbols of winding number zero and
give direct endpoint constructions showing failure of norm control at
$\delta=1/2$ in $A(K)$ for every infinite compact Abelian group $K$, and in the
unitization of $L^1(G)$ for every nondiscrete locally compact Abelian group $G$.
\end{abstract}

\maketitle

\section{Introduction}
\label{sec:introduction}

\subsection{From Wiener's theorem to quantitative inversion}

Wiener's $1/f$ theorem is one of the basic inversion principles of harmonic
analysis. If
\[
  f(t)=\sum_{n\in\Z}a_ne^{int},
  \qquad
  \sum_{n\in\Z}|a_n|<\infty,
\]
and $f$ has no zeros on the circle, then $1/f$ again has an absolutely
convergent Fourier series. Equivalently, the Wiener algebra $A(\T)$ is inverse-closed in $C(\T)$. Several
proofs of this qualitative statement illuminate different aspects of the quantitative
problem considered here.

Wiener's original argument \cite{Wiener1932} is local in nature. The reciprocal
can be constructed on pieces of the range on which $f$ stays away from zero,
and the local information is patched by the algebraic and approximation
properties of absolutely convergent Fourier series. After the development of
commutative Banach-algebra theory, the same theorem acquired a much shorter
spectral interpretation: the maximal ideal space of $A(\T)$ is $\T$, so the
Gelfand criterion says directly that pointwise nonvanishing is equivalent to
invertibility in the algebra; see \cite{Gelfand1941,Kaniuth2009,Zelazko1973}.
Thus the classical theorem may be read either as a concrete Fourier-analytic
localization theorem or as a statement identifying the visible points of the
spectrum with the whole maximal ideal space.

There is also a third line of proofs, closer in spirit to effective inversion.
Beurling gave a direct proof based on spectral-radius estimates
\cite{Beurling1939}; Newman later found a particularly short elementary proof
\cite{Newman1975}; and Cohen developed a constructive Banach-algebraic method
that avoids the maximal-ideal argument \cite{Cohen1961}. These approaches are
important conceptually because they show that inversion can be produced without
first identifying every maximal ideal. They do not, however, answer the question
that is central here: if both the algebra norm of $f$ and its distance from zero
are prescribed, can one bound the norm of $f^{-1}$ uniformly? In other words,
qualitative inverse-closedness and norm-controlled inversion are genuinely
different assertions.

The distinction becomes much sharper for measure algebras. Let $G$ be a locally
compact Abelian group, written additively, with dual group $\widehat G$ and Haar
measure $m_G$. We write $M(G)$ for the Banach algebra of complex regular Borel
measures on $G$, with convolution and total variation norm; its identity is
$\delta_0$. For $\mu\in M(G)$,
\[
  \widehat\mu(\gamma)
  :=\int_G\overline{\gamma(x)}\,d\mu(x),
  \qquad \gamma\in\widehat G.
\]
We denote by $M_d(G)$ the closed subalgebra of discrete measures and by $M_c(G)$
the closed subspace of continuous, that is, nonatomic, measures. For a commutative
unital Banach algebra $\mathcal A$, we write $\Delta(\mathcal A)$ for its maximal
ideal space. If $G$ is nondiscrete, then the
Fourier--Stieltjes transform does not see the whole maximal ideal space: there
are measures $\mu\in M(G)$ such that
\[
  \inf_{\gamma\in\widehat G}|\widehat\mu(\gamma)|>0
\]
while $\mu$ is not invertible in $M(G)$. This is the Wiener--Pitt phenomenon,
originating in \cite{WienerPitt1938}; its spectral structure was developed by
\v{S}re\u{\i}der \cite{Shreider1950}, and we refer also to
\cite{Williamson1959,Graham1974,OhryskoSandersWojciechowski2026} for related
developments. In Banach-algebraic language,
$\widehat G$ embeds canonically into $\Delta(M(G))$, but for nondiscrete $G$ this
visible part is too small to detect invertibility.

We shall repeatedly use two standard companion algebras. We identify
$L^1(G)=L^1(G,m_G)$ with the corresponding closed ideal of absolutely continuous
measures in $M(G)$. Its Gelfand space is canonically $\widehat G$ and its Gelfand
transform is the ordinary Fourier transform; see
\cite[Theorem~1.2.2]{Rudin1990}. We also use the Riemann--Lebesgue property
$\widehat f\in C_0(\widehat G)$ for $f\in L^1(G)$
\cite[Theorem~1.2.4(a)]{Rudin1990}. If $K$ is compact, Haar measure is
normalized by $m_K(K)=1$, and
\[
  A(K)
  :=\left\{f\in C(K):
  \sum_{\gamma\in\widehat K}|\widehat f(\gamma)|<\infty\right\},
  \qquad
  \|f\|_{A(K)}:=\sum_{\gamma\in\widehat K}|\widehat f(\gamma)|.
\]
The Fourier transform is an isometric Banach-algebra isomorphism
$A(K)\cong\ell^1(\widehat K)$, where the latter algebra is equipped with
convolution. For nondiscrete $G$ we use the standard unitization
\[
  L^1(G)^\#=\C\one\oplus_1L^1(G),
  \qquad
  \|\lambda\one+f\|_{L^1(G)^\#}=|\lambda|+\|f\|_1,
\]
realized in $M(G)$ as $\C\delta_0\oplus_1L^1(G)$. If $G$ is discrete, then
$L^1(G)=\ell^1(G)$ is already unital. For an Abelian group $\Gamma$, the notation
$\Gamma_d$ means the same group endowed with the discrete topology.

\subsection{Nikolski's quantitative framework and the critical gap}

The systematic quantitative formulation used in this paper goes back to
Nikolski's 1999 work \emph{In search of the invisible spectrum}
\cite{Nikolski1999}. Nikolski distinguished the visible spectrum from the full Gelfand spectrum and
introduced optimal inversion majorants and critical constants to quantify the resulting
obstruction. This point of view turned the Wiener--Pitt phenomenon into a quantitative
inversion problem.

Following \cite[Subsection~0.2]{Nikolski1999}, let $\mathcal A$ be a commutative
unital Banach algebra and let $X$ be a specified visible part of its maximal ideal
space. Assume that restriction of the Gelfand transform defines a continuous embedding
of $\mathcal A$ into $C_b(X)$, equipped with the supremum norm. For
$0<\delta\le1$, define
\[
  c_1(\delta;\mathcal A,X)
  :=\sup\left\{
    \|a^{-1}\|_{\mathcal A}:
    \|a\|_{\mathcal A}\le1,
    \ \inf_{x\in X}|a(x)|\ge\delta
  \right\},
\]
We set $c_1(\delta;\mathcal A,X)=+\infty$ if any admissible element is not
invertible. The first critical constant is
\[
  \delta_1(\mathcal A,X)
  :=\inf\{0<\delta\le1:c_1(\delta;\mathcal A,X)<\infty\}.
\]
Thus the problem has two logically distinct parts: determine when the visible
lower bound forces invertibility, and determine when it forces a uniform bound
on the inverse norm.

Norm-controlled inversion has also been studied in other inverse-closed Banach
algebra settings, including smooth Banach subalgebras and weighted convolution
algebras; see, for example, \cite{GrochenigKlotz2013,SameiShepelska2019}. A
recent result \cite{Ohrysko2026} establishes norm-controlled inversion for two
classes of algebras of integrable functions with summable Fourier transforms.
That setting is structurally different from the present measure-algebra problem:
here the visible Fourier--Stieltjes transform may fail to detect the full spectrum,
and the main issue is to obtain a bound uniform in the underlying locally compact
Abelian group.

Nikolski placed this formulation in a longer historical line. Stafney had shown
that inverse norms in the Wiener algebra can be unbounded even under simultaneous
uniform control of the original function and its range, and Shapiro obtained a
constructive counterexample of the same general type; see
\cite{Stafney1967,Shapiro1979} and Nikolski's historical discussion in
\cite[Subsection~0.5]{Nikolski1999}. Those results showed that a qualitative
Wiener theorem does not automatically admit a quantitative counterpart, but they
did not identify the sharp lower spectral threshold. Nikolski's framework made
that threshold itself an invariant to be estimated.

For $M(G)$ the visible set is $\widehat G$. Nikolski proved the universal upper
estimate
\begin{equation}
  c_1(\delta;M(G),\widehat G)
  \le \frac{1}{2\delta^2-1},
  \qquad
  \delta>\frac1{\sqrt2},
  \label{eq:intro-nikolski-upper}
\end{equation}
and showed that no universal norm control is possible at or below $1/2$; see
\cite[Theorem~2.1.1, Corollary~3.2.5 and Subsection~2.4.3]{Nikolski1999}.
The lower obstruction is already present for the Wiener algebra of every
infinite compact Abelian group. Nikolski's proof of the lower estimate uses a passage to the Bohr
compactification; this mechanism
will reappear in a much simpler role below when we transfer the positive result
from Fourier algebras to measure algebras.

It is instructive to isolate the mechanism behind the positive estimate
\eqref{eq:intro-nikolski-upper}, because it explains both its power and its
limitation. In the measure-algebra setting, with the standard involution
$\widetilde\mu(E):=\overline{\mu(-E)}$, one first passes to an element such as
$\mu*\widetilde\mu$, whose Fourier--Stieltjes transform is nonnegative. The LCA-group version of Wiener's mean-square formula for Fourier--Stieltjes
transforms, in the form used in \cite[proof of Theorem~2.1.1]{Nikolski1999}, gives
\[
  (\mu*\widetilde\mu)(\{0\})
  =\sum_{x\in G}|\mu(\{x\})|^2
  \ge \delta^2.
\]
Write $\eta=\mu*\widetilde\mu=a\delta_0+r$, where
$a=\eta(\{0\})\ge\delta^2$. Since $\norm{\eta}\le1$, one has
$\norm r\le1-a<a$ whenever $\delta>1/\sqrt2$. Nikolski's elementary
inversion lemma \cite[Lemma~1.4.3]{Nikolski1999}, or directly the corresponding
geometric series, yields
\[
  \norm{\eta^{-1}}\le\frac1{2a-1}
  \le\frac1{2\delta^2-1}.
\]
Finally, $\mu^{-1}=\widetilde\mu*\eta^{-1}$, so the same bound holds for
$\mu^{-1}$. Thus the threshold $1/\sqrt2$ is exactly the point at which the
available lower bound $\delta^2$ forces a single coefficient to carry more than
half of the total norm.

The work \cite{OhryskoWasilewski2020} pushed this program in two different
directions. First, it solved the corresponding \emph{qualitative} inversion
problem above the conjectured threshold: if $\|\mu\|\le1$ and
$\inf_{\widehat G}|\widehat\mu|>1/2$, then $\mu$ is invertible. The key ingredients
are the Glicksberg--Wik theorem for the discrete part and the Bohr compactification
of the dual. This showed that $1/2$ is the correct threshold for invertibility,
but it did not yet give a group-independent bound for $\|\mu^{-1}\|$ throughout
that range.

Second, the same paper improved the quantitative estimates in important classes.
Writing the atomic part as
\[
  \mu_d=\sum_{n\ge1}a_n\delta_{\tau_n},
  \qquad |a_1|\ge|a_2|\ge\cdots,
\]
the argument combines Parseval-type information with the geometry of the first
two atoms to force the largest atom $|a_1|$ above $1/2$. Under the relevant
infinite-order hypothesis this yields norm-controlled inversion for
\[
  \delta>\frac{\sqrt{33}-1}{8}
\]
and an explicit majorant; see \cite[Theorem~2.8]{OhryskoWasilewski2020}. For a
compact connected Abelian group the dual is torsion-free, so the hypothesis is
automatic. In the Fourier-algebra formulation one obtains
\[
  c_1(\delta;A(K),K)
  \le
  \frac{2}{\sqrt{17\delta^2+6\delta-7}-(1+\delta)}.
\]
The improvement is substantial, but the final inversion step is still the same:
once one distinguished atom or Fourier coefficient is larger than $1/2$, the
remainder has norm less than that coefficient and an elementary Neumann-series
argument applies.

The quantitative arguments discussed above reduce inversion to the existence of one
sufficiently large distinguished coefficient, possibly after symmetrization or another
auxiliary transformation. Section~\ref{sec:positive}
starts by pushing this strategy further in the connected case. Theorem~\ref{thm:improved-largest-coefficient} proves that under
$\|f\|_{A(K)}\le1$ and $|f|\ge\delta>1/2$,
\begin{equation}
  \|\widehat f\|_\infty
  \ge
  \frac{\delta+\sqrt{2\delta-1}}{2}.
  \label{eq:intro-largest-coefficient}
\end{equation}
Consequently one obtains an explicit inverse estimate for
$\delta>2-\sqrt2$. However, Example~\ref{ex:no-large-fourier-coefficient} shows
that the assumptions $\|f\|_{A(\T)}\le1$ and $\inf_{\T}|f|>1/2$ do not force
\emph{any} Fourier coefficient of the original function to have modulus greater
than $1/2$. Thus a direct argument whose decisive step is to extract a single
dominant coefficient from $f$ itself cannot reach the sharp threshold. The
example does not rule out every possible auxiliary transformation leading to a
different large-coefficient argument.

The main new idea of the present paper is to replace that one-coefficient
mechanism by a \emph{profile method}. From a hypothetical bad sequence we extract all
coherent concentration profiles and write
\[
  f_j=h_j+w_j,
\]
where $h_j$ is the \emph{profile core} and the largest Fourier coefficient of the
remainder $w_j$ tends to zero. If $P$ is the total Wiener mass of the profiles and
$\beta$ is the limiting Wiener mass of the remainder, then the limiting-core
construction gives
\[
  P+\beta\le1,
  \qquad
  P\ge\delta,
  \qquad
  \beta\le1-\delta<\delta.
\]
This \emph{mass gap} is the point at which the threshold $1/2$ enters. Choosing
$\beta<\rho<\delta$ and introducing an auxiliary circle variable $z$, we invert a
limiting resolvent whose inverse has \emph{one-sided spectrum}, meaning only
non-negative powers of $z$. That information controls all inverse powers of $h_j$
simultaneously and allows the diffuse remainder to be absorbed. The full roadmap of
this argument is given immediately after Theorem~\ref{thm:uniform-fourier-inversion}.

\subsection{Main results and organization}

Our first main theorem is the resulting group-independent inversion theorem for
Fourier algebras of compact Abelian groups.

\begin{theoremA}
For every $\delta>1/2$ there is a constant $C_A(\delta)<\infty$ such that, for
every compact Abelian group $K$ and every $f\in A(K)$ with
\[
  \|f\|_{A(K)}\le1,
  \qquad
  \inf_{x\in K}|f(x)|\ge\delta,
\]
one has $f^{-1}\in A(K)$ and
\[
  \|f^{-1}\|_{A(K)}\le C_A(\delta).
\]
\end{theoremA}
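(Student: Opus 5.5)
We argue by contradiction and compactness, following the profile strategy outlined in the introduction. Qualitative invertibility is already known: by \cite{OhryskoWasilewski2020}, or directly by Gelfand theory since $\Delta(A(K))=K$, every $f$ with $\inf_K|f|\ge\delta$ is invertible. So only the uniform bound is at stake. Suppose it fails for some $\delta>1/2$. Then there are compact Abelian groups $K_j$ and $f_j\in A(K_j)$ with $\|f_j\|\le1$, $|f_j|\ge\delta$ and $\|f_j^{-1}\|\to\infty$. We may pass to the countable subgroup of $\widehat{K_j}$ generated by $\supp\widehat f_j$; this does not increase the inverse norm, since inversion in $\ell^1$ of a subgroup agrees with inversion in the whole group. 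We may also truncate $f_j$ to finitely supported trigonometric polynomials at a small cost in $\delta$; the inverse norm is then $\ell^1$-continuous along the truncation.

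\textbf{Profile decomposition.} Extract concentration profiles: choose characters $\gamma_j^{(1)},\gamma_j^{(2)},\dots$ carrying the largest coefficients, and group together characters whose mutual differences stay within a fixed finite word-configuration along the sequence. Pass to subsequences and a diagonal argument so that each profile converges to a limiting finitely supported (or $\ell^1$) configuration on a limiting group, encoded as an element of $\ell^1(\Gamma_d)$ for a suitable countable group $\Gamma$. The result is $f_j=h_j+w_j$ with $\|\widehat w_j\|_\infty\to0$ and $\|w_j\|_{A}\to\beta$. The $h_j$ converge in the profile sense to a limiting core $h$ of total mass $P$, with $P+\beta\le1$.

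\textbf{Mass gap.} Next show $P\ge\delta$. The diffuse part $w_j$, having small maximal coefficient, can be made nearly orthogonal to any finite set of characters after a random translation or character shift on a Bohr-compactified limit. Hence $|h|\ge\delta$ holds on the limiting group. To get this, test $f_j$ against a Bohr-limit point where the $w_j$ contribution averages out, using that $\|\widehat w_j\|_\infty\to0$ forces $\sum|\widehat w_j|^2\to0$ relative to $\|w_j\|_A$. Then $P\ge\|h\|\ge\inf|h|\ge\delta$, and therefore $\beta\le1-\delta<\delta$.

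\textbf{Resolvent argument.} Fix $\beta<\rho<\delta$. Consider $h-\rho z$ with $z$ an auxiliary circle variable, i.e.\ work in $A(K\times\T)$. Since $|h|\ge\delta>\rho$, the function $h-\rho z$ is nonvanishing, and $(h-\rho z)^{-1}=h^{-1}\sum_{n\ge0}(\rho z/h)^n$ has only nonnegative powers of $z$. Inverse-closedness gives $(h-\rho z)^{-1}\in A$ with some finite norm $R$. Transfer this back to $h_j$ for large $j$ via profile convergence: finite configurations approximate uniformly, so $\|(h_j-\rho z)^{-1}\|\le 2R$. Reading off Taylor coefficients, $\|h_j^{-n-1}\|\le 2R\rho^{-n}$. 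Then write
\[
f_j^{-1}=\sum_{n\ge0}(-1)^n w_j^n h_j^{-n-1}.
\]
Since $\|w_j\|\le\beta'<\rho$ eventually, this series converges with norm at most $2R/(1-\beta'/\rho)$, contradicting unboundedness.

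\textbf{Main obstacle.} The hardest step is the mass-gap inequality $\inf|h|\ge\delta$ for the limiting core. It requires showing that a remainder with vanishing sup-coefficient can be neutralized at some point, uniformly over groups. I would attempt this via a randomized character shift on the diffuse frequencies, independent of the profile frequencies (a Kronecker/independence-type argument on the Bohr compactification). The second difficulty is controlling the profile-to-limit transfer of the resolvent norm uniformly in $j$.
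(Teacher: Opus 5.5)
Your outline follows the paper's architecture: profiles $f_j=h_j+w_j$, the mass gap $\beta\le1-\delta<\delta$, the one-sided resolvent $H-\rho z$ in an auxiliary circle variable, transfer to $h_j$, and a Neumann series in $w_j$. However, the two steps that carry the weight are only gestured at. The route you propose for the one you call hardest rests on an independence that is not available. The hypothesis $|f_j|\ge\delta$ can only be used at points of the group, where profile and diffuse frequencies are evaluated coherently (on $\T$ they are all integers). Near-orthogonality of $w_j$ to finitely many characters is automatic from $\|\widehat{w_j}\|_\infty\to0$ and carries no pointwise information.

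The step is routine once there is a concrete limit object, which requires the following construction.
\begin{enumerate}[label=\textup{(\alph*)}]
\item First put everything in one group $G$: take the product of the groups carrying your $f_j$, then a quotient with countable dual. Both pullbacks are isometric and preserve inverse norms.
\item Write $h_j=H\circ\phi_j$, with $H(x,(z_r)_r)=\sum_rz_rG_r(x)$ on $X=G\times\T^I$ and $\phi_j(x)=(x,(\gamma_j^{(r)}(x))_r)$.
\item Pass to a subsequence on which every relation $\lambda|_{\phi_j(G)}=1$ ($\lambda\in\widehat X$) is eventually constant. Let $K$ be the annihilator of the eventually trivial characters.
\end{enumerate}
The bound you mention only in passing, $\|w_j\|_2^2\le\|\widehat{w_j}\|_\infty\|w_j\|_{A}\to0$, gives $\int_G(\delta-|h_j|)_+^2\,dm_G\le\|w_j\|_2^2\to0$. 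Now $\phi_j$ pushes $m_G$ forward to $m_{\phi_j(G)}$, and testing on characters shows $m_{\phi_j(G)}\to m_K$ weak-$*$. Hence $\int_K(\delta-|H|)_+^2\,dm_K=0$, so $|H|\ge\delta$ on $K$ by continuity. The inequality $P\ge\delta$ by itself is even easier, since $\delta^2\le\lim_j\|f_j\|_2^2=\sum_r\|G_r\|_2^2\le P^2$. The pointwise bound is what you actually need in order to invert $H-\rho z$.

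For the transfer, approximating configurations \emph{uniformly} does not help: sup-norm closeness gives no control of Wiener norms of inverses, so the argument has to run in the Wiener norm. The paper's route is as follows.
\begin{enumerate}[label=\textup{(\alph*)}]
\item Lift $U=(H|_K-\rho z)^{-1}\in A_+(K\times\T)$ to $Q\in A_+(X\times\T)$ with $\|Q\|=\|U\|$. This is possible because restriction to $K\times\T$ identifies $\widehat X$-frequencies modulo $K^\perp$ but never $z$-degrees.
\item Note that $Q(H-\rho z)-1$ vanishes on $K\times\T$.
\item Use that for every $F\in A(X\times\T)$ the restriction norms on $\phi_j(G)\times\T$ converge to the restriction norm on $K\times\T$. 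For a trigonometric polynomial only finitely many relations are involved and they stabilize; density and contractivity do the rest.
\item A Neumann series, pulled back by $\phi_j\times\mathrm{id}$, then gives $\|(h_j-\rho z)^{-1}\|\le2\|U\|$ with non-negative $z$-spectrum.
\end{enumerate}
You need this one-sidedness, or equivalently $\inf_G|h_j|>\rho$, before reading off Taylor coefficients. At a point where $|h_j|<\rho$, the $z$-expansion of $(h_j-\rho z)^{-1}$ has only negative degrees. The non-negative coefficients are then $\rho^nh_j^{-n-1}\mathbf 1_{\{|h_j|>\rho\}}$ and give no bound on $\|h_j^{-n-1}\|$. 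Alternatively, every cluster point of a sequence $y_k\in\phi_{j_k}(G)$ with $j_k\to\infty$ lies in $K$, so $|H|\ge\delta$ on $K$ yields $\inf_G|h_j|>\rho$ for large $j$ by compactness. With these two pieces in place, your final estimate $\|f_j^{-1}\|\le2\|U\|/(1-\beta'/\rho)$ goes through.
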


This is proved as Theorem~\ref{thm:uniform-fourier-inversion}. The constant is
uniform in the group, but the proof is non-effective. The profile decomposition
and the limiting resolvent described above are the essential new ingredients.
The theorem is sharp for the class of infinite compact Abelian groups; its
sharpness will be discussed in Section~\ref{sec:sharpness}.

The Fourier-algebra theorem is then transferred to measure algebras.

\begin{theoremB}
For every $\delta>1/2$ there is a constant $C_M(\delta)<\infty$ such that, for
every locally compact Abelian group $G$ and every $\mu\in M(G)$ with
\[
  \|\mu\|_{M(G)}\le1,
  \qquad
  \inf_{\gamma\in\widehat G}|\widehat\mu(\gamma)|\ge\delta,
\]
the measure $\mu$ is invertible in $M(G)$ and
\[
  \|\mu^{-1}\|_{M(G)}\le C_M(\delta).
\]
\end{theoremB}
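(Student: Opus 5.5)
Theorem~B will be deduced from Theorem~A in two stages: first for the discrete part of $\mu$, where Theorem~A applies directly through the Bohr compactification, and then for the continuous part, which is absorbed by a Neumann series. The key input is that the discrete part inherits the lower bound.

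\emph{Step 1: the discrete part.} Write $\mu=\mu_d+\mu_c$ with $\mu_d\in M_d(G)$ and $\mu_c\in M_c(G)$, so that $\|\mu_d\|+\|\mu_c\|=\|\mu\|\le1$. We need
\[
  \inf_{\gamma\in\widehat G}|\widehat{\mu_d}(\gamma)|\ge\delta .
\]
This is the standard fact used in \cite{OhryskoWasilewski2020}, and we would prove it via the Bohr compactification $b\widehat G$, whose dual is $G_d$. The function $\widehat{\mu_d}$ is almost periodic and extends to $b\widehat G$ as the element of $A(b\widehat G)$ with Fourier coefficients $\mu(\{x\})$. By Wiener's lemma in the Bohr mean form, for each fixed $\gamma_0$,
\[
  M_\gamma\bigl(|\widehat{\mu_c}(\gamma_0+\gamma)|^2\bigr)=\sum_x|\mu_c(\{x\})|^2=0 .
\]
Hence there is a net $\gamma_\alpha\to0$ in $b\widehat G$ along which $\widehat{\mu_c}(\gamma_0+\gamma_\alpha)\to0$ while $\widehat{\mu_d}(\gamma_0+\gamma_\alpha)\to\widehat{\mu_d}(\gamma_0)$. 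The second limit uses almost periodicity: $\widehat{\mu_d}$ is uniformly continuous for the Bohr topology. Constructing this net is the delicate point. One works with finite sets of characters of $G_d$ and small Bohr neighbourhoods $V$, and uses that the mean of $|\widehat{\mu_c}|^2$ over translates in $V$ is zero, so it is small on a set of positive density inside any Bohr set. Passing to the limit gives $|\widehat{\mu_d}(\gamma_0)|\ge\delta$.

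\emph{Step 2: inverting $\mu_d$ by Theorem~A.} Regard $\widehat{\mu_d}$ as a function $f$ on the compact group $K=b\widehat G$. Then $\widehat K=G_d$ and $\widehat f(x)=\mu(\{x\})$, so $\|f\|_{A(K)}=\|\mu_d\|\le1$. Moreover $\widehat G$ is dense in $K$, so $|f|\ge\delta$ on all of $K$. Theorem~A yields $f^{-1}\in A(K)$ with norm at most $C_A(\delta)$. This inverse corresponds to $\nu\in\ell^1(G_d)=M_d(G)$ with $\nu*\mu_d=\delta_0$ and $\|\nu\|\le C_A(\delta)$.

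\emph{Step 3: absorbing the continuous part.} Write $\mu=\mu_d*(\delta_0+\nu*\mu_c)$. A plain Neumann series would need $\|\nu*\mu_c\|<1$, which fails in general. Instead we use that $\nu*\mu_c$ lies in the ideal $M_c(G)$ and that its Fourier transform vanishes in Bohr mean; the spectral radius of such an element should be controllable. I would rather avoid this and reuse the profile argument directly, or, more simply, apply Theorem~A again in a different way.

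A cleaner route, which I would try first, reduces to the lower bound for the $\|\cdot\|$-small part. Since $\|\mu_c\|\le1-\|\mu_d\|\le1-\delta<\delta$, and the visible spectrum of $\mu_d$ is bounded below by $\delta$, Step 1 applied to $\mu_d+\lambda\mu_c$ for all $|\lambda|\le1$ (these still satisfy the hypotheses after rescaling) suggests invertibility. For a uniform bound, however, the needed statement is the \emph{resolvent version} of Theorem~A: $\sup_{|z|\le1}\|(\mu_d-z\rho)^{-1}\|$ is controlled for suitable $\rho$. We have $\|\mu_c^n\|\le(1-\delta)^n$. A Gelfand-theoretic bound then gives that the spectral radius of $\nu*\mu_c$ is less than $1$: every complex homomorphism $\phi$ of $M(G)$ satisfies $|\phi(\mu_d)|\ge\delta$ and $|\phi(\mu_c)|\le1-\delta$, because $\phi$ restricted to the unital algebra generated by $\mu_d$ behaves like its $A(K)$-spectrum.

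Turning this into a norm bound uniform in $G$ is the main obstacle. I expect to need the mass-gap resolvent estimate from the proof of Theorem~A, applied with the extra variable $z$ multiplying $\mu_c$, since $\mu_c$ behaves like the diffuse remainder $w_j$, whose largest coefficient is zero.
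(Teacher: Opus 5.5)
Your Steps~1 and~2 are sound and match the paper's set-up. The lower bound $\inf|\widehat{\mu_d}|\ge\delta$ is the Glicksberg--Wik theorem, which the paper simply cites. Your mean-square sketch is a standard way to prove it: test $|\widehat{\mu_c}(\gamma_0+\cdot)|^2$, whose mean vanishes, against a nonnegative continuous function on $b\widehat G$ supported in a small Bohr neighbourhood. Then $\mu_d$ becomes $a\in A(K)$, $K=b\widehat G$, with $|a|\ge\delta$ on $K$.

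There is a genuine gap in Step~3, which is the entire difficulty and which you do not carry out. What the Neumann series $\sum_{n\ge0}(-1)^n\nu^{*(n+1)}*\mu_c^{*n}$ needs is the weighted bound
\[
  \sum_{n\ge0}s^n\|a^{-n-1}\|_{A(K)}
  =\|(a-sz)^{-1}\|_{A(K\times\T)}\le R(\delta),
  \qquad s:=\|\mu_c\|,
\]
uniformly over all compact $K$ and all $a$ with $\|a\|_{A(K)}+s\le1$ and $|a|\ge\delta$.

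A bound on $\|\nu\|$ alone is useless here, as you note. Your spectral-radius observation $r(\nu*\mu_c)\le(1-\delta)/\delta$ is correct, but it gives only qualitative invertibility. Nor does the weighted bound follow from Theorem~A used as a black box:
\begin{itemize}
\item $a-sz$ has norm at most $1$ on $K\times\T$, but its modulus is only bounded below by $\delta-s\ge2\delta-1$. Moreover, $\delta-s\le1/2$ is compatible with $s\le1-\delta$ whenever $\delta\le3/4$.
\item Applying Theorem~A to $a-\rho z$ for each fixed $z$ and then using Cauchy estimates needs both $\rho>s$ and $(\delta-\rho)/(1-s+\rho)>1/2$. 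This can be arranged only when $s<\delta-1/2$.
\end{itemize}
For the same reason, your claim that $\mu_d+\lambda\mu_c$ ``still satisfy the hypotheses after rescaling'' is unfounded: one only knows $|\widehat{\mu_d}+\lambda\widehat{\mu_c}|\ge2\delta-1$. Your ``resolvent version'' is also not stated in usable form. You never say how $\rho$ relates to $\|\mu_c\|$, nor where the budget $\|\mu_d\|+\|\mu_c\|\le1$ enters.

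The paper fills exactly this hole with the budgeted resolvent estimate, Proposition~\ref{prop:budgeted-resolvent}. It reruns the compactness argument of Theorem~A with the extra mass $s$ built in:
\begin{itemize}
\item Take a hypothetical bad sequence $(a_j,s_j)$ with $s_j\to s$, reduced to one group with countable dual, and profile-decompose $a_j=h_j+w_j$.
\item The mass balance plus the budget give $P+\beta+s\le1$, and the energy balance gives $P\ge\delta$. Hence $\beta+s\le1-\delta<\delta$.
\item For $\rho\in(\beta+s,\delta)$, the inverse of the limiting core resolvent $H-\rho z$ on the limiting subgroup times $\T$ has one-sided $z$-spectrum. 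The analytic pullback then bounds $(h_j-\rho z)^{-1}$ uniformly, that is, it bounds $\sum_k\rho^k\|h_j^{-k-1}\|$.
\item The perturbation $q_j=w_j-s_jz$ has norm tending to $\beta+s<\rho$, so it is absorbed by $\sum_k(-1)^kh_j^{-k-1}q_j^k$.
\end{itemize}
Your closing intuition points the right way. The decisive points are that $\mu_c$ enters only through its norm, as the term $sz$ in the extra variable, and that the mass gap must be proved for $\beta+s$, not for $\beta$ alone. Without stating and proving this estimate, your argument gives invertibility of $\mu$ but no bound on $\|\mu^{-1}\|$ that is uniform in $G$.
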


Theorem~\ref{thm:uniform-measure-inversion} proves this statement. The
Glicksberg--Wik theorem transfers the visible lower bound to the discrete part;
the latter is naturally a Fourier-algebra element on the Bohr compactification of
$\widehat G$. A budgeted version of the profile-resolvent argument controls all
inverse powers simultaneously, after which the continuous part is summed by an
absolutely convergent resolvent series.

The new point of Theorems~A and B is that the inverse bounds are uniform in the
underlying group throughout the full range $\delta>1/2$; together with the
endpoint and negative results discussed below, this identifies $1/2$ as the sharp
universal threshold.

The appearance of the constant $1/2$ in both Theorems~A and B is not a coincidence.
In the measure-algebra setting, $1/2$ already appears as the limiting level in the
qualitative inversion theorem of Ohrysko and Wasilewski: if $\|\mu\|\le1$ and
the visible Fourier--Stieltjes transform stays strictly above $1/2$, then $\mu$ is
invertible \cite[Theorem~2.4]{OhryskoWasilewski2020}. In the present paper, the same
number turns out to be the critical constant for norm-controlled inversion in
Theorem~A. The bridge between the two phenomena is provided by Nikolski's
``walk to the Bohr compact'' \cite[Subsections~3.1--3.2]{Nikolski1999}, which is
briefly revisited in Section~\ref{sec:sharpness}.

The same profile-resolvent mechanism also applies to Toeplitz operators on the
Wiener algebra, a problem considered by Nikolskaia \cite{Nikolskaia2001}. For the
statement below, let $W:=A(\T)$, let $W^+$ be the subalgebra of functions with no
negative Fourier frequencies, let $P_+:W\to W^+$ be the Riesz projection, and set
$T_fg:=P_+(fg)$. For a non-vanishing $f\in W$, let $\Ind(f)$ denote its winding
number around the origin.

\begin{theoremC}
For every $\delta>1/2$ there is a constant $C_T(\delta)<\infty$ such that,
whenever $f\in W$ satisfies $\|f\|_W\le1$, $\inf_{\T}|f|\ge\delta$, and
$\Ind(f)=0$, the operator $T_f:W^+\to W^+$ is invertible and
\[
  \|T_f^{-1}\|_{\mathcal L(W^+)}\le C_T(\delta).
\]
\end{theoremC}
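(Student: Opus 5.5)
The plan is to reduce Theorem~C to uniform control of a Wiener--Hopf factorization, and to obtain that control from Theorem~A together with the profile-resolvent mechanism. Recall the classical fact that a non-vanishing $f\in W$ with $\Ind(f)=0$ admits a factorization $f=f_-f_+$, where $f_+^{\pm1}\in W^+$ and $f_-^{\pm1}\in W^-:=\overline{W^+}$. The identity $T_{a b c}=T_aT_bT_c$ for $a\in W^-$, $c\in W^+$ then gives
\[
  T_f^{-1}=T_{f_+^{-1}}T_{f_-^{-1}},
  \qquad
  \|T_f^{-1}\|_{\mathcal L(W^+)}\le\|f_+^{-1}\|_W\,\|f_-^{-1}\|_W .
\]
Invertibility is therefore qualitative and classical. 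The whole content is a bound on $\|f_\pm^{-1}\|_W$ that depends only on $\delta$. For the normalization we fix $f_-(\infty)=1$, so that $\widehat{f_-^{-1}}(0)=1$.

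I would argue by contradiction. Take $f_j\in W$ with $\|f_j\|_W\le1$, $|f_j|\ge\delta$, $\Ind(f_j)=0$ and $\|T_{f_j}^{-1}\|\to\infty$. By Theorem~A, $\|f_j^{-1}\|_W\le C_A(\delta)$, so the reciprocals are uniformly controlled. Following the proof of Theorem~\ref{thm:uniform-fourier-inversion}, I would pass to a subsequence and perform the profile decomposition $f_j=h_j+w_j$. Here $h_j$ is a sum of finitely many (asymptotically, countably many with summable mass) coherent profiles of total mass $P\ge\delta$, the remainder satisfies $\|\widehat{w_j}\|_\infty\to0$, and $\limsup\|w_j\|_W=\beta\le1-\delta<\delta$.

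The Toeplitz-specific input is that the frequencies of each profile sit in a block $n_j^{(k)}+F_k$ with $F_k$ finite. Distinct blocks separate, and their mutual position relative to $0$ (left, right, or a bounded distance from $0$) stabilizes along the subsequence. Fix $\rho$ with $\beta<\rho<\delta$. I would first show that the compressions $T_{h_j}$ are invertible with $\sup_j\|T_{h_j}^{-1}\|<\infty$, and more strongly that the limiting resolvent $(h-\rho z)^{-1}$, which is one-sided in $z$, admits a Wiener--Hopf factorization in the limiting profile algebra. Since that factorization is uniform in the auxiliary circle variable $z$, it controls $\|T_{h_j^{-1}}\|$ and, for each $m$, all the powers $T_{h_j}^{-m}$ together. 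The winding condition $\Ind(f_j)=0$ has to transfer to the core. Because $|f_j|\ge\delta>\beta$ and $\|w_j\|_\infty\le\beta$, the homotopy $h_j+tw_j$, $t\in[0,1]$, avoids zero, so $\Ind(h_j)=0$ as well.

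Next I would absorb the remainder. Write $T_{f_j}=T_{h_j}(I+T_{h_j}^{-1}T_{w_j})$. A crude Neumann series would need $\|T_{h_j}^{-1}\|\,\beta<1$, which is false in general. Instead I would expand $T_{f_j}^{-1}$ in the same resolvent series used for Theorem~A, namely a series in the powers $T_{h_j}^{-m-1}(T_{w_j})^m$. The one-sided spectrum of the limiting resolvent at radius $\rho>\beta$ gives geometric control $\lesssim\rho^{-m}$ of the relevant compressions. The error terms involve the semicommutators $T_{ab}-T_aT_b=P_+aP_-b$. These are the cross terms between the diffuse remainder (whose largest coefficient tends to zero) and the finitely many coherent profiles, and they should vanish asymptotically, just as in the proof of Theorem~A. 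This would give $\sup_j\|T_{f_j}^{-1}\|<\infty$, which is the desired contradiction.

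The main obstacle I expect is this step: transplanting the mass-gap resolvent argument from the commutative algebra $W$ to the non-commutative Toeplitz setting. There the Riesz projection interacts with the profiles, and a profile whose frequency block drifts across $0$ could break the uniform factorization. My first attempt would be to show that the semicommutators $P_+ h_j P_- w_j$ tend to zero in operator norm on $W^+$, using $\|\widehat{w_j}\|_\infty\to0$ and the finite support of each profile block. If that fails, the fallback is to work directly with $f_\pm$ defined by $\log f_+=P_+\log f$, and to bound $\log f$ on the core through the resolvent integral $\log h=\frac1{2\pi i}\oint\log\lambda\,(\lambda-h)^{-1}d\lambda$ over a contour inside the annulus $\beta<|\lambda|<\delta$. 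The remainder would then be treated perturbatively.
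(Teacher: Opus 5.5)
Your reduction is correct and is essentially the paper's last step. Since $P_+$ is contractive on $W$, any $\ell\in W$ with $e^{\ell}=f$ yields $f_\pm=e^{\ell_\pm}$ and $\|T_f^{-1}\|\le e^{\|\ell\|_W}$ (Lemma~\ref{lem:toeplitz-product-rules}). The bound on $\|f_j^{-1}\|_W$ from Theorem~A plays no role. Everything therefore hinges on a \emph{uniformly bounded logarithm} of $f_j$, and this is where the proposal has a genuine gap.

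Your main route fails. Turning the Neumann series $\sum_m(-1)^m(T_{h_j}^{-1}T_{w_j})^mT_{h_j}^{-1}$ into a series in $T_{h_j}^{-m-1}T_{w_j}^m$ produces errors built from $[T_{h_j},T_{w_j}]$. These, like the semicommutators, need not tend to zero once a profile center escapes to infinity, and that is unavoidable as soon as there are two profiles. For example, take $h_j=0.8+0.1z^{n_j}$ and $w_j=(10N_j)^{-1}\sum_{k=1}^{N_j}z^{-k}$ with $N_j\to\infty$ and $N_j<n_j$. Then $f_j=h_j+w_j$ is admissible ($\|f_j\|_W=1$, $|f_j|\ge0.6$, $\Ind(f_j)=0$) and is its own profile decomposition with $\|\widehat{w_j}\|_\infty\to0$. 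Yet $(T_{h_jw_j}-T_{h_j}T_{w_j})1=-[T_{h_j},T_{w_j}]1=0.1\,z^{n_j}w_j$ has norm $0.01$ for every $j$. Finite support of the profile blocks does not help, because the relevant window of $w_j$-frequencies has length comparable to the distance of the block from $0$. Your first step is also only asserted: a Wiener--Hopf factorization of the limiting resolvent in the ``limiting profile algebra'' controlling all $T_{h_j}^{-m}$. That is the heart of the problem, and no mechanism is offered for it.

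Your fallback targets the right object, and its perturbative part is sound. Given a uniformly bounded $\log h_j\in W$, the term $\log(1+h_j^{-1}w_j)=\sum_{m\ge1}\frac{(-1)^{m+1}}{m}h_j^{-m}w_j^m$ is uniformly controlled by $\sum_k\rho^k\|h_j^{-k-1}\|_W$ from the proof of Theorem~A. But the contour integral does not produce $\log h_j$. Uniform resolvent bounds are available only on $|\lambda|\le\rho$ (from the one-sided resolvent) and on $|\lambda|>1$, while $h_j(\T)\subset\{\rho<|\lambda|\le1\}$. These circles bound an annulus on which $\log$ has no single-valued branch. A contour that encloses $h_j(\T)$ while avoiding a branch cut must pass through $\{\rho<|\lambda|\le1\}$, where you have no uniform control of $(\lambda-h_j)^{-1}$. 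Nor does $\Ind(h_j)=0$ by itself supply a cut missing $h_j(\T)$.

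The missing idea is to turn the index condition into a logarithm of the \emph{limiting core}. The paper does this in five steps:
\begin{enumerate}
\item The relation group $\mathcal L$ is pure in $\widehat X=\Z\oplus\Z^{(I)}$, so $\widehat K$ is torsion-free and $K$ is connected.
\item Bohr--van Kampen plus Arens--Royden give $H|_K=\chi e^G$ with $G\in A(K)$.
\item Lifting to $X$, pulling back by $\phi_j$ and comparing winding numbers with $\Ind(h_j)=0$ forces $\chi=1$.
\item Then $G-\sum_{n\ge1}\frac1n(\rho z/H)^n$ is a logarithm of $H-\rho z$ in $A_+(K\times\T)$. It is lifted, restricted to $K_j\times\T$ and corrected by a small logarithm, giving uniformly bounded $\mathcal U_j\in A_+(\T\times\T)$ with $e^{\mathcal U_j}=h_j-\rho z$.
\item The contractive substitution homomorphism $z\mapsto-w_j/\rho$ yields $\ell_j$ with $e^{\ell_j}=f_j$ and $\sup_j\|\ell_j\|_W<\infty$.
\end{enumerate}
The remainder is thus absorbed inside the commutative algebra, and Toeplitz non-commutativity never has to be confronted.
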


This is Theorem~\ref{thm:toeplitz-half-threshold}. The passage is not formal,
because the inverse of an invertible Toeplitz operator need not be Toeplitz; an
additional logarithmic step is required in the limiting profile core.

A complementary explicit estimate survives from the largest-coefficient method.
For a compact connected Abelian group, Theorem~\ref{thm:improved-largest-coefficient}
yields \eqref{eq:intro-largest-coefficient}. Consequently, if
$\delta>2-\sqrt2$, then
\[
  \|f^{-1}\|_{A(K)}
  \le
  \frac{1}{\delta+\sqrt{2\delta-1}-1};
\]
see Theorem~\ref{thm:inverse-bound-connected}. This part of the paper is effective
and complements the non-effective full-range theorem.

Section~\ref{sec:positive} develops the positive results. Subsection~\ref{subsec:largest-coefficient}
first records an improved explicit estimate obtained from the dominant-coefficient
strategy. Subsection~\ref{subsec:fourier-algebra-positive} introduces the profile
decomposition and proves Theorem~A. Subsection~\ref{subsec:measure-algebra-positive}
transfers the argument to measure algebras and proves Theorem~B, while
Subsection~\ref{subsec:toeplitz-positive} gives the Toeplitz application.
Section~\ref{sec:sharpness} gives direct constructions showing that $1/2$ is the
sharp universal threshold: norm control fails at the endpoint in $A(K)$ for every
infinite compact Abelian group $K$ and in $L^1(G)^\#$ for every nondiscrete
locally compact Abelian group $G$. These endpoint constructions are independent of
the new positive profile argument. We conclude in
Section~\ref{sec:further-questions} with consequences and open problems.

\section{Uniform inversion above the critical threshold}
\label{sec:positive}

\subsection{Large Fourier coefficients in the connected case}
\label{subsec:largest-coefficient}

Throughout this subsection, $G$ denotes a compact connected Abelian group, $\widehat G$
its discrete dual group, and $m_G$ the normalized Haar measure on $G$. We use the
standard realization
\[
  A(G)=\left\{f=\sum_{\gamma\in\widehat G}\fhat(\gamma)\gamma:
  \sum_{\gamma\in\widehat G}\abs{\fhat(\gamma)}<\infty\right\},
  \qquad
  \norm{f}_{A(G)}=\sum_{\gamma\in\widehat G}\abs{\fhat(\gamma)}.
\]
Since an element of $\ell^1(\widehat G)$ has at most countable support, the
moduli of the non-zero Fourier coefficients of $f\in A(G)$ can be arranged in
non-increasing order
\[
  x_1\ge x_2\ge x_3\ge\cdots\ge0,
\]
with repetitions according to multiplicity; if $f$ is a trigonometric polynomial,
we pad the finite list by zeros. In particular,
\[
  x_1=\norm{\fhat}_{\infty}.
\]

We begin by recording the estimate from \cite{OhryskoWasilewski2020} that will be
used below. It is convenient to state it directly in the Fourier-algebra language.
Recall that, since $G$ is compact, $\widehat G$ is discrete
\cite[Theorem~1.2.5]{Rudin1990}; with the standard Fourier-algebra norm, the Fourier
transform therefore identifies $A(G)$ isometrically with
$M(\widehat G)=\ell^1(\widehat G)$.

\begin{proposition}[{\cite[Proposition~2.6]{OhryskoWasilewski2020}}]
\label{prop:OW-two-largest}
Let $G$ be a compact Abelian group and let $f\in A(G)$ satisfy
\[
  \norm{f}_{A(G)}\le1,
  \qquad
  \abs{f(x)}\ge\delta>\frac12
  \quad (x\in G).
\]
If $x_1\ge x_2\ge\cdots$ is the decreasing rearrangement of the moduli of the
non-zero Fourier coefficients of $f$, then
\[
  x_1\ge\delta^2,
  \qquad
  x_1+x_2\ge\delta,
\]
where the second inequality is understood as trivial when $f$ has only one non-zero
Fourier coefficient.
\end{proposition}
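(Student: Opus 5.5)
The plan is to derive both inequalities from Parseval's identity on $K$ (normalized Haar measure $m_G$), combined with the elementary bound $\sum_k x_k^2\le(\max_k x_k)\sum_k x_k$. No structural information about $G$ (connectedness, torsion) will be needed. Throughout, $\norm{f}_{A(G)}=\sum_k x_k\le1$, and the pointwise hypothesis gives
\[
  \sum_{k\ge1}x_k^2=\sum_{\gamma\in\widehat G}\abs{\fhat(\gamma)}^2=\int_G\abs f^2\,dm_G\ge\delta^2 .
\]

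\emph{First inequality.} I would simply estimate $\sum_k x_k^2\le x_1\sum_k x_k\le x_1$. Combined with the display this gives $x_1\ge\delta^2$. In particular $x_1>\tfrac14$ and $x_1\le1$.

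\emph{Second inequality.} I would refine the same estimate by splitting off the top coefficient:
\[
  \delta^2\le x_1^2+\sum_{k\ge2}x_k^2\le x_1^2+x_2\sum_{k\ge2}x_k\le x_1^2+x_2(1-x_1).
\]
Then argue by contradiction. Suppose $x_1+x_2<\delta$. Then $x_1<\delta$ and $x_2<\delta-x_1$. Since $1-x_1\ge0$, this gives
\[
  \delta^2< x_1^2+(\delta-x_1)(1-x_1).
\]
A short factorization gives
\[
  \delta^2-x_1^2-(\delta-x_1)(1-x_1)=(\delta-x_1)(\delta+2x_1-1),
\]
so we would need $(\delta-x_1)(\delta+2x_1-1)<0$. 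But $\delta-x_1>0$, and by the first inequality
\[
  \delta+2x_1-1\ge\delta+2\delta^2-1=(2\delta-1)(\delta+1)>0,
\]
because $\delta>\tfrac12$. This is a contradiction, so $x_1+x_2\ge\delta$. When $f$ has a single nonzero coefficient, $x_2=0$ and $\abs f\equiv x_1$, so $x_1\ge\delta$ directly; the argument above covers this case as well.

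The only step requiring care is the case $x_1=1$: the inequality $x_2<\delta-x_1$ may then be strict while the factor $1-x_1$ vanishes. Here the assumption $x_1+x_2<\delta$ already contradicts $x_1=1>\delta$, so nothing is lost. I expect no real obstacle beyond choosing the right quadratic estimate. The point of the split is that bounding the tail by $x_2$, rather than $x_1$, is exactly what makes the threshold $\delta>\tfrac12$ appear, through the sign of $\delta+2x_1-1$.
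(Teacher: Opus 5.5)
Your proof is correct and complete. The first bound is $\delta^2\le\sum_k x_k^2\le x_1\sum_k x_k\le x_1$. For the second, the estimate $\delta^2\le x_1^2+x_2(1-x_1)$, combined with the factorization $(\delta-x_1)(\delta+2x_1-1)$ and with $\delta+2x_1-1\ge(2\delta-1)(\delta+1)>0$, gives the contradiction. Your separate worry about $x_1=1$ is vacuous, since $x_1+x_2<\delta\le1$ already forces $x_1<1$.

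The paper does not reprove this proposition; it imports it from \cite{OhryskoWasilewski2020}, so there is no in-paper proof to compare against. Your route uses the same Parseval mechanism the paper relies on elsewhere, namely $\sum_j x_j^2\ge\delta^2$ from $\abs{f}\ge\delta$, as in the proof of Theorem~\ref{thm:improved-largest-coefficient}. Because it uses no structural information about $G$, it also shows why the statement holds for arbitrary compact Abelian groups.
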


For comparison, we also record the stronger estimate obtained in
\cite[Theorem~2.8]{OhryskoWasilewski2020}. The formulation below is its specialization
to the present setting. Indeed, by \cite[Theorem~2.5.6(c)]{Rudin1990}, connectedness of $G$ is
equivalent to torsion-freeness of $\widehat G$. Hence the quotient of any two
distinct characters has infinite order. The cited theorem is stated with a strict lower
bound. Under the present assumption $|f|\ge\delta$, apply it with every
$\delta'\in(1/2,\delta)$. Letting $\delta'\uparrow\delta$ and using the continuity of
the displayed bounds gives the formulation below; for either inverse estimate, choose
$\delta'$ above the corresponding threshold before passing to the limit.

\begin{theorem}[{\cite[Theorem~2.8]{OhryskoWasilewski2020}}]
\label{thm:OW-largest}
Let $G$ be a compact connected Abelian group and let $f\in A(G)$ satisfy
\[
  \norm{f}_{A(G)}\le 1,
  \qquad
  \abs{f(x)}\ge\delta>\frac12
  \quad (x\in G).
\]
Then
\[
  \norm{\fhat}_{\infty}
  \ge
  \Psi(\delta)
  :=
  \frac{1-\delta+\sqrt{17\delta^2+6\delta-7}}{4}
  \ge
  \frac{3\delta-1}{2}.
\]
Moreover, the same theorem gives the following estimates for the norm of the inverse:
\[
  \norm{f^{-1}}_{A(G)}
  \le
  \frac{1}{3\delta-2}
  \qquad
  \text{if }\delta>\frac23,
\]
and
\[
  \norm{f^{-1}}_{A(G)}
  \le
  \frac{2}{\sqrt{17\delta^2+6\delta-7}-(1+\delta)}
  \qquad
  \text{if }\delta>\frac{\sqrt{33}-1}{8}.
\]
\end{theorem}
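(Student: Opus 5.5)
This statement specializes \cite[Theorem~2.8]{OhryskoWasilewski2020}, so I would not reprove it from scratch. The plan is to reproduce its mechanism in the Fourier-algebra language, starting from Proposition~\ref{prop:OW-two-largest}.

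Let $\gamma_1,\gamma_2$ carry the two largest coefficients, $x_1=|\fhat(\gamma_1)|$ and $x_2=|\fhat(\gamma_2)|$, and put $r:=1-x_1-x_2$, which bounds the Wiener mass of the rest. Write
\[
  g:=\overline{\gamma_1}f=a+b\chi+R,
  \qquad \chi:=\gamma_2\overline{\gamma_1},
\]
with $|a|=x_1$, $|b|=x_2$ and $\|R\|_{A(G)}\le r$. Because $G$ is connected, $\widehat G$ is torsion-free, so $\chi$ has infinite order and $\chi(G)$ is dense in $\T$. This is where connectedness enters.

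\begin{enumerate}[label=(\roman*)]
\item \emph{Geometric inequality.} From $|g|\ge\delta$ on $G$ we get
\[
  \bigl|a+b\chi(x)\bigr|\ge\delta-r
  \quad\text{for all }x .
\]
By density of $\chi(G)$ in $\T$ this holds for every unimodular value of $\chi$. Taking the value $-a\bar b/|ab|$ gives $x_1-x_2\ge\delta-r=\delta-1+x_1+x_2$, that is,
\[
  x_2\le\tfrac{1-\delta}{2}.
\]
\item \emph{Combining with a lower bound.} Combine (i) with $x_1+x_2\ge\delta$ from Proposition~\ref{prop:OW-two-largest}, together with an $\ell^2$-type bound: Parseval gives $\sum x_k^2=\int|f|^2\ge\delta^2$, while $\sum x_k^2\le x_1\sum x_k\le x_1$. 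The crude combination already gives $x_1\ge\delta-\tfrac{1-\delta}{2}=\tfrac{3\delta-1}{2}$, which is the displayed lower bound for $\Psi$.
\item \emph{Sharpening to $\Psi$.} To reach $\Psi$ I would refine (ii) by bounding $\sum x_k^2\le x_1^2+x_2(1-x_1)$. This uses that all coefficients after the first are at most $x_2$. We then have
\[
  x_1^2+x_2(1-x_1)\ge\delta^2,
  \qquad
  x_2\le\min\Bigl(x_1,\tfrac{1-\delta}{2}\Bigr),
\]
and could optimize over $x_2$ to get a quadratic inequality in $x_1$. I expect the right combination to be the two constraints $x_2\le x_1-\delta+r$ and Parseval, giving a quadratic whose root is $\Psi(\delta)$. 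Identifying the exact extremal configuration that produces $\sqrt{17\delta^2+6\delta-7}$ is the main obstacle. I would settle it by solving the two-parameter optimization in $(x_1,x_2)$ explicitly.
\item \emph{Inverse estimates.} Write $f=\fhat(\gamma_1)\gamma_1(1+u)$ with $\|u\|\le(1-x_1)/x_1$. The Neumann series then gives
\[
  \|f^{-1}\|\le\frac{1}{x_1-(1-x_1)}=\frac{1}{2x_1-1}.
\]
\begin{itemize}
\item Inserting $x_1\ge\tfrac{3\delta-1}{2}$ gives $1/(3\delta-2)$, valid for $\delta>2/3$.
\item Inserting $x_1\ge\Psi$ gives
\[
  \frac{1}{2\Psi-1}=\frac{2}{\sqrt{17\delta^2+6\delta-7}-(1+\delta)},
\]
valid when $\Psi>1/2$, that is, $\delta>(\sqrt{33}-1)/8$.
\end{itemize}
\item \emph{Non-strict inequality.} The passage from the strict lower bound in the cited theorem to $|f|\ge\delta$ is by the limiting argument $\delta'\uparrow\delta$ described before the statement.
\end{enumerate}
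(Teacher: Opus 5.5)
Your steps (i), (ii) and (iv) are correct, and the mechanism is the right one. The paper itself does not reprove this statement. It cites \cite[Theorem~2.8]{OhryskoWasilewski2020} and supplies only the specialization: connectedness makes $\widehat G$ torsion-free, so every quotient of two distinct characters has infinite order, plus the limit $\delta'\uparrow\delta$.

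The gap is step (iii). You never actually obtain $\Psi$, and what you flag as ``the main obstacle'' is not one. No two-parameter optimization or extremal configuration is needed, because the constraints you already wrote down finish the job. Since $1-x_1\ge0$, the bound $x_1^2+x_2(1-x_1)$ is increasing in $x_2$. Inserting $x_2\le b:=\frac{1-\delta}{2}$ from (i) into Parseval therefore gives
\[
  x_1^2-b\,x_1+b-\delta^2\ge0 .
\]
Its discriminant is $b^2-4b+4\delta^2=\frac14\bigl(17\delta^2+6\delta-7\bigr)>0$ for $\delta>\frac12$. Its roots are exactly $\frac{1-\delta\pm\sqrt{17\delta^2+6\delta-7}}{4}$. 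Since $x_1\ge\frac{3\delta-1}{2}>\frac{1-\delta}{4}$ (this needs only $\delta>\frac37$), $x_1$ lies strictly above the smaller root, hence $x_1\ge\Psi(\delta)$.

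Two small points also deserve a line.
\begin{itemize}
\item The inequality $\Psi(\delta)\ge\frac{3\delta-1}{2}$ in the statement is a claim about $\delta$ alone, not a consequence of (ii). After squaring (legitimate, since $7\delta-3>0$) it reduces to $16(2\delta-1)(1-\delta)\ge0$, using $\delta\le\norm f_\infty\le1$.
\item If $f$ has a single non-zero coefficient, your choice of direction in (i) is unavailable. In that case $x_1=\abs f\ge\delta\ge\Psi(\delta)$, and the last inequality reduces to $8(1-\delta)^2\ge0$.
\end{itemize}
Step (v) is superfluous, since your argument works with $\abs f\ge\delta$ directly.

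Once (iii) is completed, your argument is the coarse form of the paper's proof of Theorem~\ref{thm:improved-largest-coefficient}. It uses the same geometric bound $x_2\le b$, the same linear bound $x_1\ge\frac{3\delta-1}{2}$, and the same Parseval step. The only difference is the estimate of the tail energy. You use $\sum_{k\ge2}x_k^2\le b(1-x_1)$, whereas the paper invokes Lemma~\ref{lem:packing} to get $2b^2+(\delta-x_1)^2$. In the relevant range $x_1<\delta$, the paper's bound is smaller by $(\delta-x_1)\bigl(3b-(1-x_1)\bigr)\ge0$. That improvement is exactly what turns $\Psi$ into $\Phi$.
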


We now improve this lower bound. The additional gain comes from using the full
$\ell^1$ constraint on the Fourier tail after the second largest coefficient has been
controlled.

\begin{lemma}
\label{lem:packing}
Let $b>0$ and let $(y_j)_{j\ge1}$ be a sequence satisfying
\[
  0\le y_j\le b,
  \qquad
  \sum_{j\ge1}y_j\le S,
  \qquad
  2b<S\le3b.
\]
Then
\[
  \sum_{j\ge1}y_j^2\le 2b^2+(S-2b)^2.
\]
\end{lemma}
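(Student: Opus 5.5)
The bound $\sum_j y_j^2\le 2b^2+(S-2b)^2$ is a convexity statement. The function $y\mapsto\sum_j y_j^2$ is convex, so on the convex set
\[
  \Big\{y:\ 0\le y_j\le b,\ \sum_j y_j\le S\Big\}
\]
its supremum is approached at ``extreme'' configurations. In these, all coordinates but at most one equal either $0$ or $b$. Under the constraint $S\le 3b$, such a configuration has at most two coordinates equal to $b$ and one leftover coordinate of size $S-2b\in(0,b]$, which produces exactly the right-hand side. To avoid any infinite-dimensional extreme-point machinery, I will instead give a direct finite rearrangement argument.

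First I reduce to finitely many terms and to the non-increasing case. Since $\sum_j y_j^2\le b\sum_j y_j<\infty$, it suffices to prove the bound for each finite truncation $y_1,\dots,y_N$. That truncation still satisfies the hypotheses, so I may take $N$ finite. I then reorder so that $y_1\ge y_2\ge\dots\ge y_N$.

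The key step is a smoothing operation. Whenever two indices $i<k$ satisfy $0<y_k\le y_i<b$, I replace
\[
  (y_i,\,y_k)\ \longmapsto\ (y_i+t,\ y_k-t),
  \qquad t=\min(b-y_i,\,y_k).
\]
This keeps both entries in $[0,b]$ and preserves the sum. It does not decrease the sum of squares, because
\[
  (y_i+t)^2+(y_k-t)^2-y_i^2-y_k^2=2t(y_i-y_k)+2t^2\ge0.
\]
Each step either raises some coordinate to $b$ or lowers one to $0$. So after at most $N$ steps, at most one coordinate lies strictly inside $(0,b)$.

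Next I read off the resulting configuration. Let $m$ be the number of coordinates equal to $b$, and let $r\in[0,b)$ be the single remaining value. Then $mb+r=\sum_j y_j\le S\le 3b$, which forces $m\le 3$, with $m=3$ only if $r=0$ and $S=3b$. I then check each case separately.

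\begin{itemize}
  \item If $m=3$, the value is $3b^2=2b^2+(S-2b)^2$, since $S=3b$.
  \item If $m=2$, then $r\le S-2b$, so the value is $2b^2+r^2\le 2b^2+(S-2b)^2$.
  \item If $m\le1$, the value is at most $b^2+r^2\le 2b^2$, which is bounded by the right-hand side.
\end{itemize}

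In every case the truncated sum of squares is at most $2b^2+(S-2b)^2$. Letting $N\to\infty$ gives the lemma.

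The only delicate point is the case bookkeeping, in particular that $S\le 3b$ caps $m$ at $3$. The infinite-sequence issue is handled by the truncation step.
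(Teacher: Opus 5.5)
Your proof is correct and is essentially the paper's argument: you truncate to finitely many terms, apply the same sum-preserving transfer of mass between two coordinates lying strictly inside $(0,b)$ until at most one interior coordinate remains, and read off the bound from the resulting configuration. The only organizational difference is that the paper first disposes of the case $T_N=\sum_{j\le N}y_j\le 2b$ via $\sum_{j\le N}y_j^2\le bT_N\le 2b^2$ and then notes that the terminal vector must be $(b,b,T_N-2b,0,\ldots,0)$, whereas you absorb that case into your count $m$ of coordinates equal to $b$. Your initial sorting and the index condition $i<k$ are unnecessary, since the order can break after one step; simply call the larger of any two interior coordinates $y_i$.
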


\begin{proof}
For $N\ge1$ put
\[
  T_N:=\sum_{j=1}^{N}y_j.
\]
If $T_N\le2b$, then
\[
  \sum_{j=1}^{N}y_j^2\le bT_N\le2b^2.
\]
Suppose now that $T_N>2b$. We redistribute the mass of the finite vector
$(y_1,\ldots,y_N)$ without changing its sum or leaving the interval $[0,b]$.
If two coordinates lie strictly between $0$ and $b$, denote them by $u,v$ with
$0<u\le v<b$ and set
\[
  \varepsilon:=\min\{u,b-v\}>0.
\]
Replace the pair $(u,v)$ by $(u-\varepsilon,v+\varepsilon)$. The sum is unchanged,
both new coordinates belong to $[0,b]$, and
\[
  (u-\varepsilon)^2+(v+\varepsilon)^2-u^2-v^2
  =2\varepsilon(v-u)+2\varepsilon^2\ge0.
\]
Thus the sum of squares does not decrease, while at least one of these two
coordinates reaches $0$ or $b$. Repeating the operation terminates after finitely
many steps, with at most one coordinate strictly between $0$ and $b$.
Since $2b<T_N\le S\le3b$, the resulting vector is, up to a permutation,
\[
  (b,b,T_N-2b,0,\ldots,0),
\]
where the third coordinate equals $b$ if $T_N=3b$. Consequently,
\[
  \sum_{j=1}^{N}y_j^2
  \le2b^2+(T_N-2b)^2
  \le2b^2+(S-2b)^2.
\]
Thus the same upper bound holds for every partial sum. Letting $N\to\infty$
proves the assertion.
\end{proof}

\begin{theorem}
\label{thm:improved-largest-coefficient}
Let $G$ be a compact connected Abelian group, let $1/2<\delta\le1$, and let
$f\in A(G)$ satisfy
\[
  \norm{f}_{A(G)}\le1,
  \qquad
  \abs{f(x)}\ge\delta
  \quad (x\in G).
\]
Then
\[
  \norm{\fhat}_{\infty}
  \ge
  \Phi(\delta)
  :=
  \frac{\delta+\sqrt{2\delta-1}}{2}.
\]
\end{theorem}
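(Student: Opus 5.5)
The plan is to argue by contradiction, combining three constraints on the ordered coefficients $x_1\ge x_2\ge\cdots$ of $f$. Write $a:=x_1=\norm{\fhat}_\infty$ and $b:=x_2$, and assume $a<\Phi(\delta)$.

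The three constraints are these.
\begin{enumerate}[label=(\roman*)]
\item \emph{$\ell^1$ bound.} We have $\sum_j x_j\le1$.
\item \emph{Parseval bound.} Since $\abs{f}\ge\delta$ and $m_G(G)=1$,
\[
  \delta^2\le\int_G\abs{f}^2\,dm_G=\sum_j x_j^2 .
\]
\item \emph{Two-atom bounds.} Proposition~\ref{prop:OW-two-largest} gives $a\ge\delta^2$ and $a+b\ge\delta$. Connectedness gives an upper bound on $b$, as follows. Let $\gamma_1,\gamma_2$ carry the two largest coefficients $c_1,c_2$. Because $\widehat G$ is torsion-free, $\gamma_2\overline{\gamma_1}$ has infinite order, so its range is dense in $\T$. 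Hence there are points $x$ at which $c_1\gamma_1(x)$ and $c_2\gamma_2(x)$ are almost antiparallel. Bounding all other terms by their moduli gives
\[
  \delta\le (a-b)+(1-a-b)=1-2b,
  \qquad\text{so}\qquad b\le\frac{1-\delta}{2}.
\]
\end{enumerate}

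If one simply uses $\sum_{j\ge2}x_j^2\le b(1-a)$ together with $b\le(1-\delta)/2$, the Parseval bound becomes $\delta^2\le a^2+(1-\delta)(1-a)/2$. Solving this quadratic reproduces exactly $\Psi(\delta)$ of Theorem~\ref{thm:OW-largest}. So the improvement must come from the sharper tail bound of Lemma~\ref{lem:packing}.

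Concretely, apply Lemma~\ref{lem:packing} to the sequence $(x_j)_{j\ge2}$ with cap $b$ and total mass $S\le 1-a$, or else to $(x_j)_{j\ge3}$ with cap $b$ and mass $S\le 1-a-b$. This gives
\[
  \delta^2\le a^2+b^2+2b^2+(1-a-b-2b)^2
\]
in the regime $2b<S\le3b$. The other regimes need separate treatment. If $S\le2b$, the trivial bound $\sum y_j^2\le bS$ already suffices. If $S>3b$, we expect $b$ to be so small relative to $1-a$ that $\delta^2\le a^2+b(1-a)$ forces $a\ge\Phi(\delta)$ directly. This is where the constraints $b\ge\delta-a$ and $a\ge\delta^2$ should enter.

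The remaining task is a finite-dimensional optimization. We maximize the right-hand side over $b$ in the admissible interval $[\max(\delta-a,0),\min(a,(1-\delta)/2)]$ and require the maximum to be at least $\delta^2$. This should give an inequality in $a$ and $\delta$ whose boundary is
\[
  (2a-\delta)^2=2\delta-1,
  \qquad\text{i.e.}\qquad a=\Phi(\delta).
\]
As a sanity check, the endpoints $\Phi(1/2)=1/4$ and $\Phi(1)=1$ are consistent with the crude bound.

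The main obstacle is this optimization, namely identifying which configuration is extremal. We expect the extremal configuration to have the two-atom inequality $\delta\le1-2b$ (or its refinement, where the antiparallel pair cancels partially against the tail) active, with the tail packed as $(b,b,\text{remainder})$. We will first try to assume the extremal configuration has $x_2=x_3=b$ and compute. If the numbers do not close up to $\Phi$, we will refine the two-atom bound by choosing $x$ so that the third coefficient is also aligned against the first, which again uses torsion-freeness via Kronecker-type density for the pair $(\gamma_2\overline{\gamma_1},\gamma_3\overline{\gamma_1})$. Only the weaker one-character density is guaranteed, though, so that refinement may fail and is a fallback rather than the main line.
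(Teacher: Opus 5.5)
There is a genuine gap. Your ingredients are the right ones: Parseval, the $\ell^1$ budget, $x_1+x_2\ge\delta$ from Proposition~\ref{prop:OW-two-largest}, the connectedness bound $x_2\le(1-\delta)/2$, and Lemma~\ref{lem:packing}. But the decisive step is left as an unsolved optimization, and of the two packing options you list, the one you actually computed is the wrong one.

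\textbf{Why your computed step fails.} Your displayed inequality $\delta^2\le a^2+3b^2+(1-a-3b)^2$ packs $(x_j)_{j\ge3}$ with cap $b=x_2$. It is valid only when $3b<1-a\le4b$. At the relevant cap $b=(1-\delta)/2$ this regime is empty, because $a\ge\delta-b=(3\delta-1)/2$ forces $1-a\le3b$.

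The extremal configuration therefore falls into your ``$S\le2b$'' case for $(x_j)_{j\ge3}$. There the trivial bound $bS$ does \emph{not} suffice. It gives $\sum_jx_j^2\le a^2+b(1-a)\le a^2+\tfrac{1-\delta}{2}(1-a)$, which, as you observed yourself, yields only $a\ge\Psi(\delta)<\Phi(\delta)$. For example, $\delta=0.6$, $a=0.515$, $x_2=0.2$ satisfies all your constraints together with this crude estimate, yet $a<\Phi(0.6)\approx0.524$. The Kronecker-type refinement is not needed, and it can fail, e.g.\ when $\gamma_3\overline{\gamma_1}=(\gamma_2\overline{\gamma_1})^2$.

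\textbf{The missing idea.} Fix the cap at the number $\beta:=(1-\delta)/2$ rather than at the variable $x_2$, and pack the whole tail $(x_j)_{j\ge2}$ with mass $S=1-x_1$. This also solves the optimization you set up: the maximal $\sum y_j^2$ under cap $b$ and mass $S$ is nondecreasing in $b$, and $\min\{a,\beta\}=\beta$ because $a\ge\delta^2>1/4>\beta$.

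With this choice no case analysis is needed.
\begin{itemize}
\item If $x_1\ge\delta$, you are done, since $\delta\ge\Phi(\delta)$.
\item Otherwise $S>1-\delta=2\beta$. Also $x_1\ge\delta-x_2\ge(3\delta-1)/2$ gives $S\le3\beta$. So the hypotheses of Lemma~\ref{lem:packing} hold automatically, and the case $S>3\beta$, for which you only had an expectation, never occurs.
\end{itemize}
The lemma then gives $\delta^2\le x_1^2+2\beta^2+(\delta-x_1)^2$, which is equivalent to $(2x_1-\delta)^2\ge2\delta-1$. The bound $x_1\ge(3\delta-1)/2>(\delta-\sqrt{2\delta-1})/2$ rules out the lower root, so $x_1\ge\Phi(\delta)$.

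Treat $\delta=1$ separately, since then $\beta=0$ and the lemma does not apply. In that case $x_2=0$, so $x_1=\norm{f}_{A(G)}\ge\norm{f}_\infty\ge1$.
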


\begin{proof}
If $\delta=1$, then $\norm{f}_\infty\le\norm{f}_{A(G)}\le1$ and $\abs f\ge1$, so
$\abs f=1$ on $G$. By Parseval, if $(x_j)$ denotes the decreasing rearrangement
of the moduli of the non-zero Fourier coefficients, then
\[
  1=\sum_j x_j^2\le\left(\sum_j x_j\right)^2\le1.
\]
Equality between the $\ell^2$- and $\ell^1$-norms forces exactly one $x_j$ to be
non-zero. Hence $\norm{\fhat}_\infty=1=\Phi(1)$, and the conclusion follows.
We may therefore assume $\delta<1$.

Let $x_1\ge x_2\ge\cdots$ be the decreasing rearrangement of the moduli of the
non-zero Fourier coefficients of $f$. By Parseval's identity (see \cite[Theorem~1.6.1]{Rudin1990}),
\[
  \sum_{j\ge1}x_j^2
  =\int_G\abs{f(x)}^2\,dm_G(x)
  \ge\delta^2.
\]
Moreover, Proposition~\ref{prop:OW-two-largest} gives
\begin{equation}
\label{eq:two-largest}
  x_1+x_2\ge\delta.
\end{equation}

If $f$ has only one non-zero Fourier coefficient, then
$\norm{\fhat}_{\infty}=x_1\ge\delta$, and $\delta\ge\Phi(\delta)$. We may therefore assume that at least two Fourier
coefficients are non-zero. Choose distinct $\gamma_1,\gamma_2\in\widehat G$ such that
\[
  \abs{\fhat(\gamma_1)}=x_1,
  \qquad
  \abs{\fhat(\gamma_2)}=x_2.
\]
Multiplying $f$ by a unimodular scalar and by the character
$\overline{\gamma_1}$ does not change either $\norm{f}_{A(G)}$ or $\abs{f}$, so we
may assume that the largest coefficient is the positive constant $x_1$. Put
$\chi=\gamma_2\overline{\gamma_1}$. Since $\chi$ is a non-trivial character and $G$
is connected, $\chi(G)$ is a non-trivial connected compact subgroup of $\mathbb T$;
hence $\chi(G)=\mathbb T$. Consequently, there exists $x_0\in G$ for which the
contribution of the second largest coefficient points exactly in the direction opposite
to the first one. Evaluating at $x_0$ and estimating the remaining Fourier series by its
$\ell^1$ norm, we obtain
\[
  \delta
  \le\abs{f(x_0)}
  \le x_1-x_2+\sum_{j\ge3}x_j
  \le x_1-x_2+(1-x_1-x_2)
  =1-2x_2.
\]
Thus
\begin{equation}
\label{eq:x2-bound}
  x_2\le b:=\frac{1-\delta}{2}.
\end{equation}
Combining \eqref{eq:two-largest} and \eqref{eq:x2-bound}, we also get
\begin{equation}
\label{eq:x1-linear}
  x_1\ge\delta-b=\frac{3\delta-1}{2}.
\end{equation}

If $x_1\ge\delta$, then $x_1\ge\Phi(\delta)$, because
$\sqrt{2\delta-1}\le\delta$. Hence assume that $x_1<\delta$. In this case
\[
  2b=1-\delta<1-x_1,
\]
while \eqref{eq:x1-linear} gives
\[
  1-x_1\le\frac{3(1-\delta)}{2}=3b.
\]
Every term of the Fourier tail $(x_j)_{j\ge2}$ is bounded by $b$, by
\eqref{eq:x2-bound}, and its total mass is at most $1-x_1$. Lemma~\ref{lem:packing}
therefore yields
\[
  \sum_{j\ge2}x_j^2
  \le 2b^2+(1-x_1-2b)^2
  =2b^2+(\delta-x_1)^2.
\]
Using Parseval once more,
\[
  \delta^2
  \le x_1^2+2b^2+(\delta-x_1)^2.
\]
After substituting $b=(1-\delta)/2$ and simplifying, this becomes
\[
  0\le x_1^2-\delta x_1+\frac{(1-\delta)^2}{4}.
\]
The roots of the quadratic polynomial on the right are
\[
  \frac{\delta-\sqrt{2\delta-1}}{2}
  \quad\text{and}\quad
  \frac{\delta+\sqrt{2\delta-1}}{2}.
\]
On the other hand, \eqref{eq:x1-linear} implies
\[
  x_1\ge\frac{3\delta-1}{2}
  >\frac{\delta-\sqrt{2\delta-1}}{2}.
\]
Since the quadratic is non-negative at $x_1$, we must therefore have
\[
  x_1\ge\frac{\delta+\sqrt{2\delta-1}}{2}=\Phi(\delta),
\]
as required.
\end{proof}

\begin{remark}
\label{rem:comparison-largest}
For every $1/2<\delta<1$ one has
\[
  \Phi(\delta)>\Psi(\delta).
\]
Indeed, after two elementary squarings the desired inequality reduces to
\[
  (3\delta-1)^2-(\delta+1)^2(2\delta-1)
  =2(1-\delta)^3>0.
\]
Thus Theorem~\ref{thm:improved-largest-coefficient} strictly improves the estimate
from Theorem~\ref{thm:OW-largest} throughout the non-trivial range. Notice also that
\[
  \Phi(\delta)>\frac12
  \quad\Longleftrightarrow\quad
  \delta>2-\sqrt2.
\]
\end{remark}

We shall also use the following elementary inversion lemma of Nikolski. Recall that a
unital Banach algebra $\mathcal A$ is said to split at the unit if
\[
  \mathcal A=\C e\oplus\mathcal A_0
  \qquad\text{and}\qquad
  \norm{\lambda e+a}_{\mathcal A}
  =\abs{\lambda}+\norm{a}_{\mathcal A}
  \quad
  (\lambda\in\C,\ a\in\mathcal A_0).
\]

\begin{lemma}[{\cite[Lemma~1.4.3]{Nikolski1999}}]
\label{lem:nikolski-splitting}
Let $\mathcal A$ be a unital Banach algebra splitting at the unit, and let
$f=\lambda e+f_0$ with $f_0\in\mathcal A_0$. If
\[
  \frac12<\eta\le\abs{\lambda}\le\norm{f}_{\mathcal A}\le1,
\]
then $f$ is invertible in $\mathcal A$ and
\[
  \norm{f^{-1}}_{\mathcal A}\le\frac{1}{2\eta-1}.
\]
In particular, taking $\eta=\abs{\lambda}$ gives
\[
  \norm{f^{-1}}_{\mathcal A}\le\frac{1}{2\abs{\lambda}-1}.
\]
\end{lemma}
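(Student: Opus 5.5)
The plan is the standard Neumann-series argument. Since $\abs{\lambda}\ge\eta>0$, we have $\lambda\neq0$, so we can factor
\[
  f=\lambda\,(e+g),
  \qquad
  g:=\lambda^{-1}f_0\in\mathcal A_0 .
\]
The splitting identity $\norm{f}_{\mathcal A}=\abs{\lambda}+\norm{f_0}_{\mathcal A}\le1$ gives $\norm{f_0}_{\mathcal A}\le1-\abs{\lambda}$. Hence
\[
  \norm{g}_{\mathcal A}\le\frac{1-\abs{\lambda}}{\abs{\lambda}}<1,
\]
and the strict inequality holds precisely because $\abs{\lambda}>1/2$.

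The Neumann series $\sum_{n\ge0}(-g)^n$ then converges absolutely in $\mathcal A$ and inverts $e+g$. By submultiplicativity, together with $\norm{e}=1$ (which follows from the splitting norm with $\lambda=1$, $a=0$),
\[
  \norm{(e+g)^{-1}}_{\mathcal A}\le\frac{1}{1-\norm{g}}\le\frac{\abs{\lambda}}{2\abs{\lambda}-1}.
\]
Consequently
\[
  \norm{f^{-1}}_{\mathcal A}=\abs{\lambda}^{-1}\norm{(e+g)^{-1}}_{\mathcal A}\le\frac{1}{2\abs{\lambda}-1}.
\]

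The final step compares with $\eta$. The function $t\mapsto1/(2t-1)$ is decreasing on $(1/2,\infty)$, and $\abs{\lambda}\ge\eta$, so $1/(2\abs{\lambda}-1)\le1/(2\eta-1)$. The "in particular" clause is the case $\eta=\abs{\lambda}$, which is admissible because $\abs{\lambda}\le\norm{f}_{\mathcal A}\le1$ and $\abs{\lambda}>1/2$.

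I do not expect a real obstacle. The only delicate point is to use the splitting norm to get $\norm{f_0}\le1-\abs{\lambda}$, rather than the weaker triangle inequality, and to note that $\norm{e}=1$ follows from the splitting so that the Neumann bound applies.
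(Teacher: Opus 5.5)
Your proof is correct and is essentially the paper's route. The paper takes this lemma from Nikolski, and both in the introduction and in the proof of Proposition~\ref{prop:toeplitz-explicit-range} it justifies it by the geometric (Neumann) series, after using the split norm to obtain $\norm{f_0}_{\mathcal A}\le 1-\abs{\lambda}<\abs{\lambda}$, exactly as you do, with your monotonicity step in $\eta$ finishing the argument.
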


\begin{theorem}
\label{thm:inverse-bound-connected}
Let $G$ be a compact connected Abelian group, let
\[
  \delta>2-\sqrt2,
\]
and let $f\in A(G)$ satisfy
\[
  \norm{f}_{A(G)}\le1,
  \qquad
  \abs{f(x)}\ge\delta
  \quad (x\in G).
\]
Then $f$ is invertible in $A(G)$ and
\[
  \norm{f^{-1}}_{A(G)}
  \le
  \frac{1}{\delta+\sqrt{2\delta-1}-1}.
\]
\end{theorem}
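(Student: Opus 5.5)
The plan is to combine Theorem~\ref{thm:improved-largest-coefficient} with Nikolski's splitting Lemma~\ref{lem:nikolski-splitting}, after moving the dominant Fourier coefficient to the trivial character.

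First, I would note that $\delta>2-\sqrt2>1/2$, so Theorem~\ref{thm:improved-largest-coefficient} applies. It gives a character $\gamma_1\in\widehat G$ with
\[
  \abs{\fhat(\gamma_1)}=\norm{\fhat}_\infty\ge\Phi(\delta)=\frac{\delta+\sqrt{2\delta-1}}{2}.
\]
The supremum is attained because $\fhat\in\ell^1(\widehat G)$ vanishes at infinity on the discrete group $\widehat G$. Next, put $g:=\overline{\gamma_1}f$. Multiplication by a character shifts the Fourier coefficients, so $\norm{g}_{A(G)}=\norm{f}_{A(G)}\le1$, and moreover $\widehat g(0)=\fhat(\gamma_1)$.

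Second, I would use the identification $A(G)\cong\ell^1(\widehat G)$. This algebra splits at the unit: take $e=\mathbf 1$, the trivial character, and let $\mathcal A_0$ be the functions whose Fourier transform vanishes at $0$. The norm is additive across this splitting because the $\ell^1$ norm is. Write $g=\lambda\mathbf 1+g_0$ with $\lambda=\widehat g(0)$, and set $\eta:=\Phi(\delta)$. Remark~\ref{rem:comparison-largest} shows that $\eta>1/2$ exactly when $\delta>2-\sqrt2$. We then have
\[
  \tfrac12<\eta\le\abs{\lambda}\le\norm{g}_{A(G)}\le1,
\]
so Lemma~\ref{lem:nikolski-splitting} yields that $g$ is invertible with
\[
  \norm{g^{-1}}_{A(G)}\le\frac{1}{2\eta-1}=\frac{1}{\delta+\sqrt{2\delta-1}-1}.
\]

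Finally, $f^{-1}=\overline{\gamma_1}\,g^{-1}$, and multiplying by the unimodular character $\overline{\gamma_1}$ is an isometry of $A(G)$. So the same bound holds for $f^{-1}$.

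There is no real obstacle here. The only points to check are that the threshold condition $\eta>1/2$ coincides with the hypothesis $\delta>2-\sqrt2$, which is the equivalence already recorded in the remark, and that the character shift preserves both the norm and the splitting structure.
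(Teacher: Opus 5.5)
Your proof is correct and is essentially the paper's argument: you shift the dominant coefficient to the trivial character and apply Lemma~\ref{lem:nikolski-splitting} to the norm splitting $A(G)=\C\one\oplus A_0$. The paper takes $\eta=\abs{\fhat(\gamma_0)}$ and then uses $1/(2a-1)\le 1/(2\Phi(\delta)-1)$, whereas you take $\eta=\Phi(\delta)$ directly, which is equivalent; the only detail you skip is the hypothesis $\delta\le1$ of Theorem~\ref{thm:improved-largest-coefficient}, which follows at once from $\delta\le\norm{f}_\infty\le\norm{f}_{A(G)}\le1$, as the paper notes.
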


\begin{proof}
The assumptions imply $\delta\le\norm f_\infty\le\norm f_{A(G)}\le1$. Hence
Theorem~\ref{thm:improved-largest-coefficient} applies, and we may choose
$\gamma_0\in\widehat G$
such that
\[
  a:=\abs{\fhat(\gamma_0)}
  =\norm{\fhat}_{\infty}
  \ge\Phi(\delta)>\frac12.
\]
Replacing $f$ by $c\,\overline{\gamma_0}f$ for a suitable $c\in\mathbb T$,
we may assume that $\fhat(1)=a$. Then
\[
  A(G)=\C\one\oplus A_0,
  \qquad
  A_0:=\bigl\{h\in A(G):\widehat h(1)=0\bigr\},
\]
and this decomposition splits the $A(G)$-norm. Hence
Lemma~\ref{lem:nikolski-splitting}, applied with $\eta=a$, gives
\[
  \norm{f^{-1}}_{A(G)}
  \le\frac{1}{2a-1}
  \le\frac{1}{2\Phi(\delta)-1}
  =\frac{1}{\delta+\sqrt{2\delta-1}-1}.
\]
Since multiplication by a character and by a unimodular scalar preserves the
$A(G)$-norm, it also preserves the norm of the inverse. This proves the claim.
\end{proof}

\begin{example}
\label{ex:no-large-fourier-coefficient}
The preceding argument cannot cover the whole range $\delta>1/2$ by forcing a
single Fourier coefficient to have modulus greater than $1/2$. Indeed, for
$t\in\R$ consider the Blaschke factor evaluated on the unit circle,
\[
  B(e^{it}):=\frac{e^{it}-\frac25}{1-\frac25 e^{it}},
\]
and set
\[
  f(t):=\frac59 B(e^{it}).
\]
Since $\abs{B(e^{it})}=1$, we have
\[
  \abs{f(t)}=\frac59>\frac12
  \qquad (t\in\R).
\]
Moreover,
\[
  B(e^{it})
  =-\frac25
   +\frac{21}{25}\sum_{n=1}^{\infty}
      \left(\frac25\right)^{n-1}e^{int},
\]
so that
\[
  f(t)
  =-\frac29
   +\frac{7}{15}\sum_{n=1}^{\infty}
      \left(\frac25\right)^{n-1}e^{int}.
\]
Consequently,
\[
  \norm{f}_{A(\T)}
  =\frac29
   +\frac{7}{15}\sum_{n=1}^{\infty}
      \left(\frac25\right)^{n-1}
  =\frac29+\frac79
  =1,
\]
whereas
\[
  \norm{\fhat}_{\infty}
  =\frac{7}{15}
  <\frac12.
\]
Thus the standard assumptions $\norm{f}_{A(\T)}\le1$ and
$\inf_{t\in\R}\abs{f(t)}>1/2$ do not imply that any Fourier coefficient has modulus
larger than $1/2$. In particular, a proof of norm-controlled inversion throughout the
full range $\delta>1/2$ must use information beyond the size of a single Fourier
coefficient.
\end{example}

\subsection{Fourier algebras}
\label{subsec:fourier-algebra-positive}

We now pass from the connected case to arbitrary compact Abelian groups. The
bound obtained below is non-effective, but it is uniform both in the function and in
the underlying group.

\begin{theorem}[Uniform norm-controlled inversion above $1/2$]
\label{thm:uniform-fourier-inversion}
For every $\delta>1/2$ there exists a constant $C_A(\delta)<\infty$ with the
following property. For every compact Abelian group $G$ and every $f\in A(G)$
satisfying
\begin{equation}
  \norm{f}_{A(G)}\le1,
  \qquad
  \inf_{x\in G}\abs{f(x)}\ge\delta,
  \label{eq:uniform-fourier-assumptions}
\end{equation}
the function $f$ is invertible in $A(G)$ and
\begin{equation}
  \norm{f^{-1}}_{A(G)}\le C_A(\delta).
  \label{eq:uniform-fourier-conclusion}
\end{equation}
The constant $C_A(\delta)$ is independent of $G$.
\end{theorem}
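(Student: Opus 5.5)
We argue by contradiction together with a compactness (profile) argument, following the roadmap sketched in the introduction. Qualitative invertibility for a single $f$ is already known: $A(G)\cong\ell^1(\widehat G)$ has maximal ideal space $G$, so $\inf_G|f|\ge\delta>0$ gives $f^{-1}\in A(G)$ by Gelfand theory. The only issue is uniformity.

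Suppose the conclusion fails for some $\delta>1/2$. Then there are compact Abelian groups $G_j$ and functions $f_j\in A(G_j)$ satisfying \eqref{eq:uniform-fourier-assumptions} with $\|f_j^{-1}\|_{A(G_j)}\to\infty$. Since $\|f_j^{-1}\|\le\sum_n\|f_j^{-n}\|_{A}^{\cdots}$ is controlled by the growth of powers, it suffices to bound $\sup_n\|f_j^{-n}\|^{1/n}$ at a uniform exponential rate below $1/\rho$, or to bound a resolvent directly.

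\textbf{Profile decomposition.} Regard $\widehat f_j\in\ell^1(\widehat G_j)$ as a finite measure on the discrete group $\widehat G_j$.

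1. Extract the profiles greedily. Pick frequencies carrying coefficients of non-negligible size. Translate by the top frequency, and group the other large frequencies according to the subgroup they generate.

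2. After passing to a subsequence and a diagonal argument, obtain finitely or countably many profiles. Each profile is a finitely generated configuration whose coefficients converge. This yields a limiting object $h$: an element of $\ell^1(\Gamma)$ for a limiting discrete Abelian group $\Gamma$, namely an ultraproduct-type limit of the groups generated by the profile frequencies, with relations stabilizing along the subsequence.

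3. Write $f_j=h_j+w_j$ with $\|\widehat w_j\|_\infty\to0$. Let $P=\lim\|h_j\|$ and $\beta=\limsup\|w_j\|$, so that $P+\beta\le1$.

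The key claim is $P\ge\delta$. The plan is to show that $|\widehat h|\ge\delta$ on the dual compact group $\widehat\Gamma$. A remainder with small sup of coefficients has $L^2$ (and higher even moment) control. Evaluating $f_j$ at points whose characters realise a prescribed value of $h$ while $w_j$ averages out, a random/Haar-average argument gives $|h|\ge\delta$ in an averaged sense. Concretely, use $\int|h_j+w_j|^{2k}$ and the fact that $w_j$ behaves asymptotically independently of $h_j$, since its frequencies escape every profile. Then $\beta\le1-\delta<\delta$.

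\textbf{Resolvent step.} With $P\ge\delta>\beta$, fix $\rho\in(\beta,\delta)$.

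1. The limit core $h$ satisfies $|h|\ge\delta$ on its group. It is invertible in $\ell^1(\Gamma)$ by Wiener/Gelfand, and in fact $\|h^{-n}\|\le C\rho^{-n}$-type bounds hold. To get these, consider $F(z)=1-z\,h^{-1}\cdot(\text{direction})$, or more simply the resolvent $(h-\zeta)^{-1}$ for $|\zeta|\le\rho$. This resolvent is invertible in the limiting algebra because $|h-\zeta|\ge\delta-\rho>0$ pointwise.

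2. Introduce the circle variable $z$ and invert $h-\rho z$ in $A(\widehat\Gamma\times\T)$. The inverse has only non-negative powers of $z$, and its $z^n$ coefficient is $\rho^nh^{-n-1}$. This gives $\sum_n\rho^n\|h^{-n-1}\|<\infty$ uniformly.

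3. Transfer this bound to $h_j$ for large $j$. The profile structure makes $h_j$ asymptotically isometric to a copy of $h$ on finitely many coordinates, and $\ell^1$ tails are uniformly small.

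4. Sum $f_j^{-1}=h_j^{-1}\sum_{n}(-w_jh_j^{-1})^n$. Since $w_j$ and $h_j$ commute and $\|w_j\|\le\rho'<\rho$ eventually,
\[
\|f_j^{-1}\|\le\sum_n\|w_j\|^n\|h_j^{-n-1}\|\le\sum_n(\rho'/\rho)^n\rho^n\|h_j^{-n-1}\|\cdot\rho^{-n}\rho^{n},
\]
which is bounded. This contradicts $\|f_j^{-1}\|\to\infty$.

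\textbf{Main obstacle.} The hardest step is the mass gap $P\ge\delta$. We must show that the diffuse remainder cannot help keep $|f_j|$ large, so that the lower bound $\delta$ is inherited by the profile core. I would first try a moment argument. Compare $\int|f_j|^{2k}\,dm$ with $\int|h_j|^{2k}$ plus cross terms, and use $\|\widehat w_j\|_\infty\to0$ to kill the correlations between $w_j$ and the finitely generated profile subgroup. If that fails, I would instead use Kronecker-type equidistribution: choose points where the characters of $w_j$ are nearly independent of those of $h_j$, so that $|f_j|\ge\delta$ forces $|h_j|\ge\delta-o(1)$ at a dense set of values.

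A second delicate point is making the limiting group $\Gamma$ and the transfer of the resolvent bound back to $h_j$ rigorous, uniformly in the changing groups $G_j$.
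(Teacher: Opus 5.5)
Your outline follows the same architecture as the paper (profile decomposition, limiting core, mass gap, one-sided resolvent $H-\rho z$, Neumann series in $w_j$). However, the steps that carry the argument are missing, and the step you single out as the main obstacle is not where the difficulty lies. The inequality $P\ge\delta$ is immediate. From $\norm{w_j}_2^2\le\norm{\widehat{w_j}}_\infty\norm{w_j}_{A}\to0$ one gets $\norm{h_j}_{A}\ge\norm{h_j}_2\ge\norm{f_j}_2-\norm{w_j}_2\ge\delta-o(1)$. No moments, independence heuristics or equidistribution are needed: $w_j$ does not have to ``average out'', it simply tends to $0$ in $L^2$.

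What actually requires work is twofold. First, the balance $P+\beta\le1$, which you assert, is not automatic, because $h_j$ and $w_j$ can cancel. The paper obtains it by extracting each profile as a coefficientwise limit of a demodulated sequence, so that the $\ell^1$ splitting $\norm{u_j}-\norm{u_j-u}\to\norm{u}$ (Lemma~\ref{lem:wiener-norm-splitting}) applies. Grouping large frequencies by the subgroup they generate does not by itself yield such a balance. Moreover, coefficientwise limits are not even meaningful while $G_j$ varies; the paper pulls everything back isometrically to $\prod_jG_j$ and passes to a quotient with countable dual. Second, the resolvent step needs the \emph{pointwise} bound $\abs{H}\ge\delta$ on the limit group. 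Averaged information does not give this without a concrete realization of the limit. In the paper, $h_j=H\circ\phi_j$ for the single function $H(x,(z_r))=\sum_rz_rG_r(x)$ on $X=G\times\T^I$, where $\phi_j(x)=(x,(\gamma_j^{(r)}(x))_r)$, $K_j=\phi_j(G)$, and $K$ is the annihilator of the eventually stable relations. Then $\int_{K_j}(\delta-\abs{H})_+^2\,dm_{K_j}\le\norm{w_j}_2^2\to0$, and weak-$*$ convergence $m_{K_j}\to m_K$, together with continuity and full support, yields $\abs{H}\ge\delta$ on $K$.

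Your transfer step (step~3 of the resolvent part) is not yet an argument. Restriction norms converge for a \emph{fixed} element of $A(X\times\T)$, whereas $(h_j-\rho z)^{-1}$ changes with $j$. Also, $\sum_n\rho^n\norm{h_j^{-n-1}}$ must be bounded uniformly in $n$, so it cannot be transferred term by term from the limit. The missing device runs as follows. Lift $U=\bigl((H-\rho z)|_{K\times\T}\bigr)^{-1}$ to some $Q\in A_+(X\times\T)$ of the same norm; this is possible because restriction is a metric quotient that does not alter $z$-degrees. The fixed element $R=Q(H-\rho z)-1$ vanishes on $K\times\T$, so $\norm{R|_{K_j\times\T}}\to0$. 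Now invert $1+R|_{K_j\times\T}$ by a Neumann series; this gives a uniform bound and an inverse that is still in $A_+$.

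The one-sidedness is indispensable for your step~4. A priori you only know $\abs{h_j}\ge\delta-\norm{w_j}_\infty$, which can be smaller than $\rho$. So on a disconnected group nothing in your sketch prevents $\abs{h_j}<\rho$ on a clopen set. There $(h_j-\rho z)^{-1}$ expands in negative powers of $z$, its $z^n$-coefficients are not $\rho^nh_j^{-n-1}$, and the series $\sum_n\rho^n\norm{h_j^{-n-1}}$ in your step~4 diverges. Preservation of non-negative $z$-spectrum is what forces $\abs{h_j}>\rho$ everywhere (Lemma~\ref{lem:h-outside-rho}). That in turn gives the identity $\norm{(h_j-\rho z)^{-1}}=\sum_n\rho^n\norm{h_j^{-n-1}}$, on which your final Neumann series rests.
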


\paragraph{Roadmap of the proof.}
We argue by contradiction. The proof proceeds in the following steps.
\begin{enumerate}[label=\textup{(\arabic*)}]
\item A failure of uniformity, even with the compact group allowed to vary, is first
reduced to a bad sequence $(f_j)$ in one compact Abelian group $G$ with countable dual.
This turns the problem into a sequential compactness question for Fourier coefficients.
\item We extract all non-vanishing concentration profiles of $(f_j)$ up to modulation.
This gives
\[
  f_j=h_j+w_j,
\]
where $h_j$ is the sum of pairwise asymptotically separated profiles, while the largest
Fourier coefficient of $w_j$ tends to zero. The decomposition has exact asymptotic
balances for both the Wiener norm and the $L^2$ energy.
\item The profiles are assembled into a single limiting core $H$ on a compact product
group $X$. The frequency relations of the approximating cores define compact subgroups
$K_j\subset X$ and a limiting subgroup $K$. The original lower bound $|f_j|\ge\delta$
passes to the limit and yields $|H|\ge\delta$ on $K$.
\item If $P$ is the total Wiener mass of the profiles and $\beta$ is the limiting Wiener
mass of the remainder, the mass and energy balances imply
\[
  P+\beta\le1,
  \qquad
  P\ge\delta.
\]
Hence $\beta\le1-\delta<\delta$. This is the decisive mass gap and the only place where
the strict threshold $\delta>1/2$ is used.
\item Choose $\rho$ with $\beta<\rho<\delta$. On $K\times\T$ the limiting resolvent
$H(y)-\rho z$ is invertible. Its inverse has only non-negative $z$-frequencies. A
stability-under-restriction argument transports this one-sided inverse to $K_j\times\T$
and then back to $G\times\T$, giving a uniform bound for $(h_j-\rho z)^{-1}$.
\item The non-negative $z$-spectrum converts that resolvent bound into simultaneous
control of all inverse powers of $h_j$. Since $\|w_j\|_{A(G)}\to\beta<\rho$, one
may choose $r$ with $\beta<r<\rho$ and arrange $\|w_j\|_{A(G)}\le r$ for all
remaining $j$. The series
\[
  \sum_{k\ge0}(-1)^k h_j^{-k-1}w_j^k
\]
then has tails bounded uniformly by a geometric factor $(r/\rho)^N$. It equals
$f_j^{-1}$, contradicting the assumed blow-up of the inverse norms.
\end{enumerate}
This organization is the point at which the proof departs from the earlier
largest-coefficient strategy: the entire family of profiles, rather than one dominant
Fourier coefficient, carries the information needed for inversion. Throughout the
contradiction arguments below, after passing to a subsequence we freely discard finitely
many initial indices and relabel the remaining sequence; all asymptotic statements are
understood in this sense.

The qualitative invertibility in the statement is the classical Wiener lemma for compact
Abelian groups. More precisely, it follows from the description of the maximal ideal space
of $\ell^1(\widehat G)$ together with Pontryagin duality and the Gelfand invertibility
criterion; see \cite[Theorems~1.2.2 and~1.7.2, Appendix~D4(c)]{Rudin1990}. The content of
Theorem~\ref{thm:uniform-fourier-inversion} is the uniform control of the norm of the
inverse. Notice also that the number $1/2$ is a universal threshold for the class of all
compact Abelian groups.

\subsubsection{Reduction to a fixed group and to a countable dual}

Suppose that Theorem~\ref{thm:uniform-fourier-inversion} fails for some fixed
$\delta>1/2$. Then for every $j\ge1$ there are a compact Abelian group $G_j$ and a
function $f_j\in A(G_j)$ such that
\begin{equation}
  \norm{f_j}_{A(G_j)}\le1,
  \qquad
  \abs{f_j}\ge\delta,
  \qquad
  \norm{f_j^{-1}}_{A(G_j)}\ge j.
  \label{eq:varying-bad-sequence}
\end{equation}
Let
\[
  G:=\prod_{j=1}^{\infty}G_j
\]
and let $\pi_j:G\to G_j$ be the $j$th coordinate projection. The pullback
$f\mapsto f\circ\pi_j$ is an isometric embedding $A(G_j)\hookrightarrow A(G)$,
because
\[
  \widehat G\cong\bigoplus_{j=1}^{\infty}\widehat{G_j}.
\]
Thus, for $F_j:=f_j\circ\pi_j$,
\[
  \norm{F_j}_{A(G)}=\norm{f_j}_{A(G_j)},
  \qquad
  \inf_G\abs{F_j}=\inf_{G_j}\abs{f_j},
\]
and, since $F_j^{-1}=f_j^{-1}\circ\pi_j$,
\[
  \norm{F_j^{-1}}_{A(G)}=\norm{f_j^{-1}}_{A(G_j)}.
\]
Consequently it is enough to exclude a bad sequence in one fixed compact Abelian
group.

\begin{proposition}
\label{prop:fixed-group-no-bad-sequence}
Let $G$ be a fixed compact Abelian group and let $\delta>1/2$. There is no sequence
$(f_j)\subset A(G)$ such that
\begin{equation}
  \norm{f_j}_{A(G)}\le1,
  \qquad
  \abs{f_j(x)}\ge\delta\quad(x\in G),
  \qquad
  \norm{f_j^{-1}}_{A(G)}\longrightarrow\infty.
  \label{eq:bad-fixed-group}
\end{equation}
\end{proposition}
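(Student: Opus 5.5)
We argue by contradiction and follow the roadmap given after Theorem~\ref{thm:uniform-fourier-inversion}. Suppose $(f_j)\subset A(G)$ satisfies \eqref{eq:bad-fixed-group}.

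\textbf{Reduction to a countable dual.} Since each $\widehat{f_j}$ has countable support, the subgroup $\Gamma\subset\widehat G$ generated by all these supports is countable. Each $f_j$ factors through the compact quotient $G/\Gamma^\perp$, whose dual is $\Gamma$, and this factorization is isometric for the Wiener norms and for the inverses. Hence we may assume that $\widehat G$ is countable. We may also assume $\norm{f_j^{-1}}_{A(G)}\to\infty$ monotonically.

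\textbf{Profile extraction.} Inductively choose frequencies $\gamma_j^{(1)},\gamma_j^{(2)},\dots$ and pass to diagonal subsequences so that each modulated sequence $\overline{\gamma_j^{(k)}}f_j$ converges coefficientwise in $\ell^1(\widehat G)$-weak$^*$ to a nonzero profile $\phi_k\in\ell^1(\widehat G)$. The profiles are required to be pairwise asymptotically separated, meaning $\gamma_j^{(k)}-\gamma_j^{(l)}\to\infty$ for $k\ne l$. We extract profiles greedily, at each step taking a modulation that captures a near-maximal remaining largest coefficient. Then $f_j=h_j+w_j$, where $h_j$ is a truncation of $\sum_k\gamma_j^{(k)}\phi_k$ to a growing number of profiles and growing finite supports, and $\norm{\widehat{w_j}}_\infty\to0$. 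Weak$^*$ lower semicontinuity of the $\ell^1$ norm gives $P:=\sum_k\norm{\phi_k}_{\ell^1}$ and $\beta:=\lim\norm{w_j}_{A(G)}$ with $P+\beta\le 1$. For the energy, $\norm{w_j}_2^2\le\norm{\widehat{w_j}}_\infty\norm{w_j}_{A}\to0$, so $\norm{h_j}_2\to\norm{f_j}_2$ asymptotically. It remains to show $P\ge\delta$. Here is the plan.
\begin{itemize}
\item Build the limiting core $H=\sum_k\chi_k\phi_k$ on $X=\prod_k G$, with $\chi_k$ acting through the $k$th coordinate.
\item Let $K\subset X$ be the limiting subgroup encoding the asymptotic relations between the frequencies $\gamma_j^{(k)}$; it is the limit of the annihilators $K_j$ of these relations, by a Kronecker-type argument.
\item Show $\abs{H}\ge\delta$ on $K$. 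For $y\in K$, approximate $y$ by points $x_j\in G$ at which $h_j(x_j)$ approximates $H(y)$. At such points $\abs{f_j(x_j)}\ge\delta$, while the diffuse part $w_j$ is small after averaging over a suitable neighbourhood, since $\norm{\widehat{w_j}}_\infty\to0$ forces $w_j\to0$ in $L^2$ locally.
\end{itemize}
Then $P\ge\norm{H}_{A}\ge\norm H_\infty\ge\delta$, so $\beta\le1-\delta<\delta$.

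\textbf{Resolvent and conclusion.} Fix $\beta<r<\rho<\delta$. On $K\times\T$ the function $H(y)-\rho z$ has modulus at least $\delta-\rho>0$, so it is invertible in $A(K\times\T)$ by Wiener's lemma. Its inverse has only nonnegative $z$-frequencies: formally $(H-\rho z)^{-1}=\sum_{n\ge0}\rho^nz^nH^{-n-1}$, and on $\abs z\le1$ the function $H-\rho z$ has no zeros. The key transfer step is to show that $(h_j-\rho z)^{-1}$ exists in $A(G\times\T)$ with norm bounded uniformly in $j$. For this, approximate the inverse on $K$ by a finitely supported element $R$ with $\norm{(H-\rho z)R-1}<1/2$. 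Since $K_j\to K$, the same relation holds for the lift of $R$ with $h_j$ in place of $H$ for large $j$, and it can be pulled back to $G$ along the frequency map $k\mapsto\gamma_j^{(k)}$, which is isometric on the relevant finite sets for large $j$ by separation. A Neumann series then bounds the inverse.

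Reading off the $z^n$ coefficients, which is legitimate because the inverse is one-sided, gives $\norm{h_j^{-n-1}}_A\le C\rho^{-n}$ for all $n$. Since $\norm{w_j}_A\le r$ eventually, the series $\sum_k(-1)^kh_j^{-k-1}w_j^k$ converges with norm at most $C\sum_k(r/\rho)^k$ and equals $f_j^{-1}$. This contradicts $\norm{f_j^{-1}}_A\to\infty$.

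\textbf{Main obstacles.} I expect two steps to be the hardest: the lower bound $\abs H\ge\delta$ on the limiting subgroup $K$, which requires showing that the remainder $w_j$ does not interfere pointwise, and the transfer of the one-sided resolvent from $K$ back to $G$ uniformly in $j$. For the first, I would try to use the lower bound $\abs{f_j}\ge\delta$ together with a Fejér-type averaging that kills $w_j$ and preserves the finitely many profile frequencies.
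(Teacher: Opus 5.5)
Your proposal is essentially the paper's own proof: the same countable-dual reduction, profile decomposition $f_j=h_j+w_j$ with mass gap $\beta\le1-\delta<\delta$, limiting core with limiting subgroup $K$ and $\abs{H}\ge\delta$ on $K$, one-sided resolvent $(H-\rho z)^{-1}$ pulled back to $G\times\T$ through a lift and stabilization of relations, and Neumann series $\sum_k(-1)^kh_j^{-k-1}w_j^k$ controlled by the inverse powers of $h_j$. When writing it out, follow the paper on four points:
\begin{itemize}
\item The core must live on $X=G\times\T^I$, with $H(x,(z_r))=\sum_r z_rG_r(x)$ and $\phi_j(x)=(x,(\gamma_j^{(r)}(x))_r)$. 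The modulations are circle-valued, so $\prod_kG$ is the wrong ambient group.
\item The inequality $P+\beta\le1$ needs the exact splitting of Lemma~\ref{lem:wiener-norm-splitting}; lower semicontinuity alone does not give it.
\item The bound $\abs{H}\ge\delta$ on $K$ comes from $\int_{K_j}(\delta-\abs{H})_+^2\,dm_{K_j}=\int_G(\delta-\abs{h_j})_+^2\,dm_G\le\norm{w_j}_2^2$ together with $m_{K_j}\to m_K$ weak$^*$, along a subsequence on which every relation stabilizes. Equivalently, your local averaging must be over the Bohr-type sets $\phi_j^{-1}(U)$, whose measures $m_{K_j}(U)$ stay bounded below. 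A Fourier-multiplier (Fej\'er) smoothing of $f_j$ would not work, because it destroys $\abs{f_j}\ge\delta$.
\item The approximate inverse $R$ and its lift must have only non-negative $z$-degrees. Then the inverse on $G\times\T$ lies in $A_+$ and its $z^n$-coefficients are exactly $\rho^nh_j^{-n-1}$ (Lemma~\ref{lem:analytic-pullback}).
\end{itemize}
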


For the proof of Proposition~\ref{prop:fixed-group-no-bad-sequence} we may further
reduce to a group with countable dual.

\begin{lemma}[Reduction to a countable dual]
\label{lem:countable-dual-reduction}
If a compact Abelian group $G$ admits a sequence satisfying
\eqref{eq:bad-fixed-group}, then such a sequence exists on a compact Abelian group
$G_0$ whose dual $\widehat{G_0}$ is countable.
\end{lemma}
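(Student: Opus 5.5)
The plan is to take the countable subgroup of $\widehat G$ generated by all Fourier supports of the $f_j$ and of their inverses, and let $G_0$ be its Pontryagin dual; then restriction of coefficients to this subgroup is isometric and compatible with products and inverses. Concretely, let $(f_j)\subset A(G)$ satisfy \eqref{eq:bad-fixed-group}. By the qualitative Wiener lemma recalled after Theorem~\ref{thm:uniform-fourier-inversion}, each $f_j$ is invertible in $A(G)$. Each of $\widehat{f_j}$ and $\widehat{f_j^{-1}}$ is in $\ell^1(\widehat G)$, so it has countable support. Let
\[
  \Lambda\subset\widehat G
\]
be the subgroup generated by $\bigcup_j\bigl(\supp\widehat{f_j}\cup\supp\widehat{f_j^{-1}}\bigr)$. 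It is countable, being generated by a countable set.

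Next I regard $\Lambda$ as a discrete group and set $G_0:=\widehat{\Lambda}$, a compact Abelian group with $\widehat{G_0}\cong\Lambda$ countable by Pontryagin duality. The Fourier transform identifies $A(G_0)\cong\ell^1(\Lambda)$ and $A(G)\cong\ell^1(\widehat G)$ isometrically. The inclusion $\ell^1(\Lambda)\hookrightarrow\ell^1(\widehat G)$, given by extension by zero, is an isometric unital algebra homomorphism, because convolution of sequences supported in the subgroup $\Lambda$ stays supported in $\Lambda$. Dually, this corresponds to pulling back along the surjection $q:G\to G_0$ obtained by restricting characters of $\widehat G$ to $\Lambda$. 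For each $j$, I define $g_j\in A(G_0)$ as the function whose coefficient sequence is $\widehat{f_j}|_\Lambda$, so that $f_j=g_j\circ q$. Likewise I let $u_j\in A(G_0)$ have coefficient sequence $\widehat{f_j^{-1}}|_\Lambda$. Since the embedding is an injective homomorphism and $f_jf_j^{-1}=1$, we get $g_ju_j=1$, hence $g_j^{-1}=u_j$.

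It remains to transfer the three conditions in \eqref{eq:bad-fixed-group}. The norms are preserved: $\norm{g_j}_{A(G_0)}=\norm{f_j}_{A(G)}\le1$ and $\norm{g_j^{-1}}_{A(G_0)}=\norm{f_j^{-1}}_{A(G)}\to\infty$. For the pointwise bound, $q$ is surjective, since restriction of characters from a discrete group to a subgroup is onto by divisibility of $\T$ (the standard extension of characters). Hence $g_j(G_0)=f_j(G)$, and so $\abs{g_j}\ge\delta$ on $G_0$. Thus $(g_j)$ is a bad sequence on $G_0$, which proves the lemma.

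The only step needing care is this surjectivity of $q$, equivalently the extension of characters of $\Lambda$ to $\widehat G$. It can be cited from \cite{Rudin1990}. Alternatively, the lower bound can be transferred without it: $g_j$ is invertible in $A(G_0)$ because $u_j$ is its inverse. I would still use surjectivity, since it gives $\inf\abs{g_j}\ge\delta$ directly, whereas invertibility alone gives only that $g_j$ does not vanish.
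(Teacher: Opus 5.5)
Your proof is correct and is essentially the paper's argument. Your $G_0=\widehat\Lambda$ with the restriction map $q$ is the paper's quotient $G/\Lambda^{\perp}$, and the surjectivity of $q$ you need is the same duality fact the paper cites, $\widehat{G/N}\cong N^{\perp}$. The only difference is in how the inverses are handled. You put the Fourier supports of the $f_j^{-1}$ into the generating set, so they pass to $G_0$ algebraically through the isometric embedding $\ell^1(\Lambda)\hookrightarrow\ell^1(\widehat G)$. The paper uses only the supports of the $f_j$, and then gets $f_j^{-1}=F_j^{-1}\circ q$ from the Wiener lemma on $G/N$ together with uniqueness of inverses.
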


\begin{proof}
Put $\Gamma=\widehat G$. For every $j$, the Fourier support
$\supp\fhat_j$ is at most countable. Hence the subgroup
\[
  \Gamma_0:=
  \left\langle\bigcup_{j=1}^{\infty}\supp\fhat_j\right\rangle
  \subset\Gamma
\]
is countable. Let
\[
  N:=\Gamma_0^{\perp}
  =\{x\in G:\gamma(x)=1\text{ for all }\gamma\in\Gamma_0\}.
\]
Since $\Gamma$ is discrete, $\Gamma_0$ is closed, and the duality between closed
subgroups and quotient groups \cite[Theorem~2.1.2]{Rudin1990} yields
\[
  \widehat{G/N}\cong N^{\perp}=\Gamma_0.
\]
Each $f_j$ is $N$-invariant, so there is $F_j\in A(G/N)$ such that
$f_j=F_j\circ q$, where $q:G\to G/N$ is the quotient map. The identification of
the dual groups gives
\[
  \norm{F_j}_{A(G/N)}=\norm{f_j}_{A(G)},
  \qquad
  \inf_{G/N}\abs{F_j}=\inf_G\abs{f_j}.
\]
By the Wiener lemma, $F_j^{-1}\in A(G/N)$, and uniqueness of the inverse gives
$f_j^{-1}=F_j^{-1}\circ q$. Hence
\[
  \norm{F_j^{-1}}_{A(G/N)}=\norm{f_j^{-1}}_{A(G)}.
\]
Taking $G_0=G/N$ proves the assertion.
\end{proof}

If $\widehat{G_0}$ is finite, then $A(G_0)$ is finite-dimensional. The set
\[
  \left\{f\in A(G_0):
  \norm f_{A(G_0)}\le1,
  \ \inf_{G_0}\abs f\ge\delta\right\}
\]
is compact and inversion is continuous on it. Hence the inverse norms are uniformly
bounded. Therefore, from now on, in the proof of
Proposition~\ref{prop:fixed-group-no-bad-sequence} we may assume that
\begin{equation}
  \Gamma:=\widehat G
  \quad\text{is a countably infinite discrete Abelian group.}
  \label{eq:countably-infinite-dual}
\end{equation}

We use additive notation in the discrete dual group $\Gamma$; thus
$(\gamma-\eta)(x)=\gamma(x)\overline{\eta(x)}$. For $\sigma\in\Gamma$ define the modulation
\[
  M_{\sigma}f(x):=\sigma(x)f(x).
\]
Then
\begin{equation}
  \widehat{M_{\sigma}f}(\gamma)=\fhat(\gamma-\sigma),
  \qquad
  \norm{M_{\sigma}f}_{A(G)}=\norm f_{A(G)},
  \qquad
  \norm{M_{\sigma}f}_2=\norm f_2.
  \label{eq:modulation}
\end{equation}
Parseval's identity gives
\begin{equation}
  \norm f_2^2
  =\sum_{\gamma\in\Gamma}\abs{\fhat(\gamma)}^2
  \le \norm{\fhat}_{\infty}\norm f_{A(G)}.
  \label{eq:L2-by-fourier-sup}
\end{equation}
For a sequence $(\sigma_j)\subset\Gamma$ we write $\sigma_j\to\infty$ if it
eventually leaves every finite subset of $\Gamma$. We shall repeatedly use the Riemann--Lebesgue property
\cite[Theorem~1.2.4(a)]{Rudin1990}:
\begin{equation}
  \widehat u(\gamma)\longrightarrow0
  \qquad(\gamma\to\infty)
  \label{eq:Riemann-Lebesgue-discrete}
\end{equation}
for $u\in L^1(G)$; in particular it holds for $u\in A(G)$.

\subsubsection{Profile decomposition}

We say that $u_j\to u$ coefficientwise if
\[
  \widehat u_j(\gamma)\longrightarrow\widehat u(\gamma)
  \qquad(\gamma\in\Gamma).
\]
The following elementary splitting property of the Wiener norm is the basic
compactness device.

\begin{lemma}[Splitting of the Wiener norm]
\label{lem:wiener-norm-splitting}
Let $(u_j)$ be bounded in $A(G)$ and suppose that $u_j\to u$ coefficientwise.
Then $u\in A(G)$ and
\begin{equation}
  \norm{u_j}_{A(G)}-\norm{u_j-u}_{A(G)}
  \longrightarrow\norm u_{A(G)}.
  \label{eq:wiener-norm-splitting}
\end{equation}
\end{lemma}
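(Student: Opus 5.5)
The proof is the standard Brezis--Lieb type splitting argument for $\ell^1$, carried out on the Fourier side. Put $M:=\sup_j\norm{u_j}_{A(G)}<\infty$. First we show that $u\in A(G)$, or more precisely that the coefficientwise limit $\widehat u$ lies in $\ell^1(\Gamma)$ and therefore defines an element of $A(G)$ with $\norm u_{A(G)}\le M$. For every finite set $F\subset\Gamma$,
\[
  \sum_{\gamma\in F}\abs{\widehat u(\gamma)}
  =\lim_{j\to\infty}\sum_{\gamma\in F}\abs{\widehat u_j(\gamma)}\le M .
\]
Taking the supremum over $F$ gives $\sum_{\gamma}\abs{\widehat u(\gamma)}\le M$. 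We therefore read the hypothesis as saying that the coefficientwise limit exists; then $u:=\sum_\gamma\widehat u(\gamma)\gamma$ is a well-defined element of $A(G)$.

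For \eqref{eq:wiener-norm-splitting}, fix $\varepsilon>0$ and choose a finite set $F\subset\Gamma$ with $\sum_{\gamma\notin F}\abs{\widehat u(\gamma)}<\varepsilon$. Split
\[
  \norm{u_j}_{A(G)}-\norm{u_j-u}_{A(G)}
  =\sum_{\gamma\in F}\Bigl(\abs{\widehat u_j(\gamma)}-\abs{\widehat u_j(\gamma)-\widehat u(\gamma)}\Bigr)
  +\sum_{\gamma\notin F}\Bigl(\abs{\widehat u_j(\gamma)}-\abs{\widehat u_j(\gamma)-\widehat u(\gamma)}\Bigr).
\]
The first sum has finitely many terms. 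Coefficientwise convergence sends it to $\sum_{\gamma\in F}\abs{\widehat u(\gamma)}$, which lies within $\varepsilon$ of $\norm u_{A(G)}$. For each term of the second sum, the reverse triangle inequality gives
\[
  \Bigl|\,\abs{\widehat u_j(\gamma)}-\abs{\widehat u_j(\gamma)-\widehat u(\gamma)}\,\Bigr|\le\abs{\widehat u(\gamma)} ,
\]
so the whole second sum is bounded by $\varepsilon$ in absolute value, uniformly in $j$. Hence
\[
  \limsup_{j\to\infty}\Bigl|\norm{u_j}_{A(G)}-\norm{u_j-u}_{A(G)}-\norm u_{A(G)}\Bigr|\le2\varepsilon .
\]
Since $\varepsilon>0$ was arbitrary, this proves \eqref{eq:wiener-norm-splitting}.

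The only subtle point is the interchange of limit and infinite sum. The tail estimate above avoids it: it uses only $\widehat u\in\ell^1(\Gamma)$, and does not require any uniform tail control of the sequence $(u_j)$. Boundedness of $(u_j)$ is used only to obtain $\widehat u\in\ell^1(\Gamma)$, so that $u\in A(G)$.
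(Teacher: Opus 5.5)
Your proof is correct and is the paper's argument. The paper gets $u\in A(G)$ from Fatou's lemma, then applies dominated convergence to the termwise differences, dominated via the reverse triangle inequality by $\lvert\widehat u(\gamma)\rvert$ as in your proposal; your finite-set/tail splitting is that dominated-convergence step written out by hand.
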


\begin{proof}
Fatou's lemma gives $u\in A(G)$. Moreover, for every $\gamma\in\Gamma$,
\[
  \abs{\widehat u_j(\gamma)}
  -\abs{\widehat u_j(\gamma)-\widehat u(\gamma)}
  \longrightarrow\abs{\widehat u(\gamma)},
\]
while the reverse triangle inequality yields the domination
\[
  \left|
  \abs{\widehat u_j(\gamma)}
  -\abs{\widehat u_j(\gamma)-\widehat u(\gamma)}
  \right|
  \le\abs{\widehat u(\gamma)}.
\]
Summing and applying dominated convergence proves
\eqref{eq:wiener-norm-splitting}.
\end{proof}

\begin{theorem}[Profile decomposition]
\label{thm:profile-decomposition}
Let $(f_j)$ be bounded in $A(G)$ and assume that $\Gamma=\widehat G$ is
countable. After passing to a subsequence, the limits
\[
  L:=\lim_j\norm{f_j}_{A(G)},
  \qquad
  E:=\lim_j\norm{f_j}_2^2
\]
exist, and there exist a finite or countable index set $I$, functions $G_r\in A(G)$,
frequencies $\gamma_j^{(r)}\in\Gamma$, and a decomposition
\begin{equation}
  f_j=h_j+w_j,
  \qquad
  h_j:=\sum_{r\in I}M_{\gamma_j^{(r)}}G_r,
  \label{eq:profile-decomposition}
\end{equation}
with the following properties:
\begin{enumerate}[label=\textup{(\roman*)}]
\item if $r\ne s$, then
\[
  \gamma_j^{(r)}-\gamma_j^{(s)}\longrightarrow\infty;
\]
\item the series defining $h_j$ converges absolutely in $A(G)$ for every $j$;
\item for some $\beta\ge0$,
\begin{equation}
  \norm{\widehat{w_j}}_{\infty}\longrightarrow0,
  \qquad
  \norm{w_j}_{A(G)}\longrightarrow\beta;
  \label{eq:remainder-properties}
\end{equation}
\item one has
\begin{equation}
  L=\sum_{r\in I}\norm{G_r}_{A(G)}+\beta;
  \label{eq:profile-mass-balance}
\end{equation}
\item one has
\begin{equation}
  E=\sum_{r\in I}\norm{G_r}_2^2.
  \label{eq:profile-energy-balance}
\end{equation}
\end{enumerate}
In particular,
\begin{equation}
  \norm{w_j}_2\longrightarrow0.
  \label{eq:remainder-L2}
\end{equation}
\end{theorem}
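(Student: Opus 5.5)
We will construct the profiles by an iterative extraction combined with a diagonal argument, using Lemma~\ref{lem:wiener-norm-splitting} as the bookkeeping device.

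\emph{Setup.} Since $\Gamma$ is countable and the coefficient sequences $(\widehat f_j)$ are bounded in $\ell^1(\Gamma)$, any sequence of modulated functions $M_{-\sigma_j}f_j$ has a coefficientwise convergent subsequence by a diagonal argument. After a first subsequence we may assume that $L$ and $E$ exist.

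\emph{Extraction.} Enumerate $\Gamma=\{\eta_1,\eta_2,\dots\}$. We proceed recursively. Suppose the remainders
\[
  w_j^{(n)}:=f_j-\sum_{r<n}M_{\gamma_j^{(r)}}G_r
\]
have been defined. Set
\[
  \varepsilon_n:=\limsup_j\norm{\widehat{w^{(n)}_j}}_\infty .
\]
If $\varepsilon_n=0$, stop. Otherwise, choose $\gamma_j^{(n)}$ with
\[
  \abs{\widehat{w_j^{(n)}}(\gamma_j^{(n)})}\ge\tfrac12\varepsilon_n
\]
along a subsequence. Pass to a further subsequence so that $M_{-\gamma_j^{(n)}}w_j^{(n)}\to G_n$ coefficientwise. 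Then $\abs{\widehat G_n(0)}\ge\varepsilon_n/2$, so $G_n\neq0$.

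\emph{Separation and splitting.} Separation (i) follows because, for $r<n$, the coefficients of $M_{-\gamma_j^{(r)}}w_j^{(n)}$ tend to $0$ coefficientwise. If $\gamma_j^{(n)}-\gamma_j^{(r)}$ had a bounded subsequence, then along a further subsequence it would be a constant $\tau$. This would give
\[
  \widehat{w_j^{(n)}}(\gamma_j^{(n)})=\widehat{M_{-\gamma_j^{(r)}}w_j^{(n)}}(\tau)\to0,
\]
a contradiction. Applying Lemma~\ref{lem:wiener-norm-splitting} to $M_{-\gamma_j^{(n)}}w_j^{(n)}$ gives
\[
  \lim_j\norm{w_j^{(n)}}_{A}=\norm{G_n}_A+\lim_j\norm{w_j^{(n+1)}}_A .
\]
The analogous identity holds for $\norm\cdot_2^2$ by the Hilbert-space version: weak convergence in $\ell^2$ plus Pythagoras. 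Iterating,
\[
  \sum_{r\le n}\norm{G_r}_A\le L ,
\]
so $\sum_r\norm{G_r}_A<\infty$ and hence $\varepsilon_n\le2\norm{G_n}_A\to0$. This is the point where choosing a near-maximal coefficient at each stage matters.

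\emph{Diagonalization and remainder.} Take the diagonal subsequence and set $I=\{1,2,\dots\}$, or $I$ finite if the process stops. Then (ii) holds since $\norm{M_\gamma G_r}_A=\norm{G_r}_A$ is summable.

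The main obstacle is (iii)--(v) for the full remainder $w_j=f_j-h_j$: we must exchange the limit in $j$ with the infinite sum over profiles. For this we use a tail estimate: for each $n$,
\[
  \norm{w_j-w_j^{(n+1)}}_A\le\sum_{r>n}\norm{G_r}_A=:t_n\to0
\]
uniformly in $j$. Consequently
\[
  \limsup_j\norm{\widehat w_j}_\infty\le\varepsilon_{n+1}+t_n\to0 .
\]
The recursion gives
\[
  \lim_j\norm{w_j^{(n+1)}}_A=L-\sum_{r\le n}\norm{G_r}_A ,
\]
hence $\norm{w_j}_A$ converges to
\[
  \beta:=L-\sum_r\norm{G_r}_A .
\]
This proves (iii) and (iv). Here one should be careful that the diagonal subsequence inherits all stage limits, which holds since each stage is a subsequence of the previous ones.

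For the $L^2$ statements, \eqref{eq:L2-by-fourier-sup} gives
\[
  \norm{w_j}_2^2\le\norm{\widehat w_j}_\infty\norm{w_j}_A\to0 ,
\]
which is \eqref{eq:remainder-L2}. Finally, (v) follows from the energy recursion
\[
  E=\sum_{r\le n}\norm{G_r}_2^2+\lim_j\norm{w_j^{(n+1)}}_2^2 ,
\]
together with the bounds
\[
  \norm{w_j^{(n+1)}}_2\le\norm{w_j}_2+t_n ,
\]
letting $n\to\infty$.
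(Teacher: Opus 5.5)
Your proposal is correct and follows essentially the paper's proof: near-maximal extraction, the same separation argument, Lemma~\ref{lem:wiener-norm-splitting}, diagonalization, and the uniform tail bound $t_n$. The only variations are that you get (v) from a stage-by-stage Pythagorean splitting instead of the paper's asymptotic orthogonality of distinct profiles, and you pass to the full remainder by fixing $n$, taking $\limsup_j$, and letting $n\to\infty$ rather than through the paper's explicit index $R(j)$.

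One step you assert but should justify is that $M_{-\gamma_j^{(r)}}w_j^{(n)}\to0$ coefficientwise for every $r<n$, not only for $r=n-1$. This is a short induction using the already established separation $\gamma_j^{(n-1)}-\gamma_j^{(r)}\to\infty$ and the decay of $\widehat{G_{n-1}}$ at infinity, as the paper notes.
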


\begin{proof}
Passing first to a subsequence, assume that both
$L=\lim_j\norm{f_j}_{A(G)}$ and $E=\lim_j\norm{f_j}_2^2$ exist. Put
$R_j^{(0)}=f_j$ and, after a further subsequence, assume that
\[
  \lambda_0:=\lim_j\norm{\widehat{R_j^{(0)}}}_{\infty}
\]
exists. If $\lambda_0=0$, there are no non-zero profiles: take $h_j=0$ and
$w_j=f_j$. Suppose therefore that $\lambda_0>0$.

Choose $\gamma_j^{(1)}\in\Gamma$ so that
\[
  \abs{\widehat{R_j^{(0)}}(\gamma_j^{(1)})}
  \ge\frac12\norm{\widehat{R_j^{(0)}}}_{\infty},
\]
and put $U_j^{(1)}=M_{-\gamma_j^{(1)}}R_j^{(0)}$. Since $\Gamma$ is
countable, a diagonal argument yields a subsequence on which every Fourier
coefficient of $U_j^{(1)}$ converges. Fatou's lemma shows that the limiting
coefficients define a function $G_1\in A(G)$, with
\[
  \abs{\widehat G_1(0)}\ge\lambda_0/2>0.
\]
Set
\[
  R_j^{(1)}:=R_j^{(0)}-M_{\gamma_j^{(1)}}G_1.
\]
Then $M_{-\gamma_j^{(1)}}R_j^{(1)}\to0$ coefficientwise and
Lemma~\ref{lem:wiener-norm-splitting} gives
\[
  \norm{R_j^{(0)}}_{A(G)}-\norm{R_j^{(1)}}_{A(G)}
  \longrightarrow\norm{G_1}_{A(G)}.
\]

Continue inductively. After extracting $R$ profiles we have
\[
  R_j^{(R)}
  =f_j-\sum_{r=1}^{R}M_{\gamma_j^{(r)}}G_r,
\]
the centers are pairwise divergent, every already extracted frame sees the remainder
coefficientwise tending to zero, and
\begin{equation}
  \norm{f_j}_{A(G)}-\norm{R_j^{(R)}}_{A(G)}
  \longrightarrow\sum_{r=1}^{R}\norm{G_r}_{A(G)}.
  \label{eq:finite-profile-mass}
\end{equation}
After passing to a subsequence, let
$\lambda_R=\lim_j\norm{\widehat{R_j^{(R)}}}_{\infty}$. If $\lambda_R=0$, the procedure stops.
Otherwise choose $\gamma_j^{(R+1)}$ so that
\[
  \abs{\widehat{R_j^{(R)}}(\gamma_j^{(R+1)})}
  \ge\frac12\norm{\widehat{R_j^{(R)}}}_{\infty},
\]
and extract a coefficientwise limit $G_{R+1}$ after demodulation. Then
\begin{equation}
  \abs{\widehat G_{R+1}(0)}\ge\lambda_R/2.
  \label{eq:new-profile-detection}
\end{equation}
The new center must diverge from every preceding one. Indeed, if
$\gamma_j^{(R+1)}-\gamma_j^{(s)}$ did not escape every finite subset of the
discrete group $\Gamma$, then some finite set would contain infinitely many of these
differences. One value $d\in\Gamma$ would therefore occur along an infinite
subsequence. Along that subsequence,
\[
  \widehat{R_j^{(R)}}(\gamma_j^{(R+1)})
  =\widehat{M_{-\gamma_j^{(s)}}R_j^{(R)}}(d)\longrightarrow0,
\]
contradicting $\lambda_R>0$. Subtracting the new profile preserves the local
coefficientwise vanishing around the old centers, because
\eqref{eq:Riemann-Lebesgue-discrete} kills the translated coefficients of
$G_{R+1}$. Lemma~\ref{lem:wiener-norm-splitting} gives the next instance of
\eqref{eq:finite-profile-mass}.

The subsequences chosen during the induction are nested. We take a diagonal
subsequence whose tail, for each fixed $R$, lies in the subsequence selected at the
$R$-th stage, and then relabel it by $j$.
Extend each sequence of centers $(\gamma_j^{(r)})_j$ arbitrarily over the finitely many
indices preceding the extraction of the $r$-th profile. Thus a single subsequence is
obtained for which all finite levels of the construction are valid. For every fixed $R$,
\begin{equation}
  \lim_j\norm{R_j^{(R)}}_{A(G)}
  =L-\sum_{r=1}^{R}\norm{G_r}_{A(G)},
  \label{eq:fixed-R-remainder-mass}
\end{equation}
so
\begin{equation}
  \sum_r\norm{G_r}_{A(G)}\le L.
  \label{eq:profile-summability}
\end{equation}
Moreover, \eqref{eq:new-profile-detection} implies
$\lambda_R\le2\norm{G_{R+1}}_{A(G)}$ whenever the construction continues; hence
$\lambda_R\to0$ if infinitely many profiles occur.

In the finite case, the terminal remainder is $w_j$. In the infinite case, choose
integers $N_R$ strictly increasing, with $N_R\ge R$, so that for $j\ge N_R$ both
\[
  \abs{\norm{\widehat{R_j^{(R)}}}_{\infty}-\lambda_R}\le R^{-1}
\]
and
\[
  \left|
  \norm{R_j^{(R)}}_{A(G)}-
  \left(L-\sum_{r=1}^{R}\norm{G_r}_{A(G)}\right)
  \right|\le R^{-1}.
\]
For $j<N_1$ put $R(j)=0$, and for $j\ge N_1$ let
$R(j)=\max\{R:N_R\le j\}$. These finitely many initial indices do not affect any
asymptotic conclusion. Define
\[
  h_j:=\sum_{r=1}^{\infty}M_{\gamma_j^{(r)}}G_r,
  \qquad
  w_j:=f_j-h_j.
\]
The series converges absolutely by \eqref{eq:profile-summability}, and
\[
  w_j=R_j^{(R(j))}
  -\sum_{r>R(j)}M_{\gamma_j^{(r)}}G_r.
\]
The two diagonal-tail estimates are explicit. First,
\begin{align*}
  \norm{\widehat{w_j}}_{\infty}
  &\le
  \norm{\widehat{R_j^{(R(j))}}}_{\infty}
  +
  \sum_{r>R(j)}\norm{G_r}_{A(G)}
  \\
  &\le
  \lambda_{R(j)}+\frac1{R(j)}
  +
  \sum_{r>R(j)}\norm{G_r}_{A(G)}
  \longrightarrow0.
\end{align*}
Second,
\[
  \left|
  \norm{w_j}_{A(G)}
  -
  \norm{R_j^{(R(j))}}_{A(G)}
  \right|
  \le
  \sum_{r>R(j)}\norm{G_r}_{A(G)}
  \longrightarrow0.
\]
By the choice of $N_R$,
\[
  \norm{R_j^{(R(j))}}_{A(G)}
  -
  \left(
    L-\sum_{r=1}^{R(j)}\norm{G_r}_{A(G)}
  \right)
  \longrightarrow0,
\]
and therefore
\[
  \norm{w_j}_{A(G)}\longrightarrow
  \beta:=L-\sum_r\norm{G_r}_{A(G)}.
\]
Then \eqref{eq:L2-by-fourier-sup} gives $\norm{w_j}_2\to0$.

Finally, if $r\ne s$, then
\[
  \left\langle
  M_{\gamma_j^{(r)}}G_r,
  M_{\gamma_j^{(s)}}G_s
  \right\rangle
  =\int_G
  (\gamma_j^{(r)}-\gamma_j^{(s)})(x)
  G_r(x)\overline{G_s(x)}\,dm_G(x),
\]
which tends to zero by \eqref{eq:Riemann-Lebesgue-discrete}. Thus every finite
collection of profiles is asymptotically orthogonal in $L^2$. The profile tail is
uniformly small because
\[
  \left\|
  \sum_{r>R}M_{\gamma_j^{(r)}}G_r
  \right\|_2
  \le\sum_{r>R}\norm{G_r}_2
  \le\sum_{r>R}\norm{G_r}_{A(G)}\longrightarrow0.
\]
Hence
\[
  \norm{h_j}_2^2\longrightarrow\sum_r\norm{G_r}_2^2,
\]
and, since $f_j=h_j+w_j$ with $\norm{w_j}_2\to0$, the energy identity
\eqref{eq:profile-energy-balance} follows.
\end{proof}

\subsubsection{The limiting profile core}

Apply Theorem~\ref{thm:profile-decomposition} to a hypothetical sequence satisfying
\eqref{eq:bad-fixed-group}. Passing to a subsequence, assume that
\[
  L:=\lim_j\norm{f_j}_{A(G)}\le1,
  \qquad
  E:=\lim_j\norm{f_j}_2^2
\]
exist. Since $\abs{f_j}\ge\delta$,
\begin{equation}
  E\ge\delta^2.
  \label{eq:energy-lower-bound}
\end{equation}
Let $I$ be the profile index set. Define
\[
  X:=G\times\T^I
\]
and
\begin{equation}
  H\bigl(x,(z_r)_{r\in I}\bigr)
  :=\sum_{r\in I}z_rG_r(x).
  \label{eq:limiting-core-H}
\end{equation}
The series converges absolutely in $A(X)$ by
\eqref{eq:profile-summability}. For every $j$ define
\[
  \phi_j:G\to X,
  \qquad
  \phi_j(x)
  :=\bigl(x,(\gamma_j^{(r)}(x))_{r\in I}\bigr),
\]
and let $K_j:=\phi_j(G)$. Then $K_j$ is a compact subgroup of $X$,
$\phi_j$ is an isomorphism of $G$ onto $K_j$, and
\begin{equation}
  H\circ\phi_j=h_j.
  \label{eq:H-pullback}
\end{equation}

The dual group
\[
  \widehat X=\Gamma\oplus\Z^{(I)}
\]
is countable. For $\lambda=(\xi,(a_r))\in\widehat X$,
\[
  \lambda\circ\phi_j
  =\xi+\sum_{r\in I}a_r\gamma_j^{(r)}\in\Gamma,
\]
where only finitely many $a_r$ are non-zero. Passing to one more subsequence, we may
assume that, for every $\lambda\in\widehat X$, either
$\lambda|_{K_j}=1$ for all sufficiently large $j$, or
$\lambda|_{K_j}\ne1$ for all sufficiently large $j$. Put
\[
  \mathcal L
  :=\{\lambda\in\widehat X:
  \lambda|_{K_j}=1\text{ for all sufficiently large }j\}
\]
and
\begin{equation}
  K:=\mathcal L^{\perp}\subset X.
  \label{eq:limiting-subgroup-K}
\end{equation}
Then $\mathcal L$ is a subgroup of $\widehat X$ and, by the double annihilator
theorem \cite[Lemma~2.1.3]{Rudin1990},
\[
  K^{\perp}=\mathcal L.
\]

\begin{lemma}[Convergence of Haar measures]
\label{lem:Haar-measure-convergence}
Let $m_{K_j}$ and $m_K$ denote normalized Haar measures. Then
\[
  m_{K_j}\stackrel{*}{\longrightarrow}m_K.
\]
\end{lemma}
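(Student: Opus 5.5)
Since $X$ is a compact Abelian group, the trigonometric polynomials on $X$, that is, the finite linear combinations of characters $\lambda\in\widehat X$, are uniformly dense in $C(X)$ by Stone--Weierstrass. All the measures $m_{K_j}$ and $m_K$ are probability measures, so they are uniformly bounded in total variation. Hence it suffices to test weak$^*$ convergence on single characters $\lambda\in\widehat X$: a uniform approximation of $g\in C(X)$ by a polynomial $p$ changes both $\int g\,dm_{K_j}$ and $\int g\,dm_K$ by at most $\norm{g-p}_\infty$.

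For a compact group $L$ with normalized Haar measure and a character $\lambda$ of $X$, we have the standard orthogonality fact
\[
  \int_X\lambda\,dm_L=\int_L\lambda|_L\,dm_L=
  \begin{cases}1,&\lambda|_L\equiv1,\\ 0,&\text{otherwise}.\end{cases}
\]
This holds because $\lambda|_L$ is a character of $L$, and a nontrivial character integrates to zero by translation invariance of Haar measure. Applying it to $L=K_j$ gives $\int\lambda\,dm_{K_j}=\one[\lambda|_{K_j}=1]$. By the subsequence arranged before the lemma, this indicator is eventually constant in $j$, and it is eventually $1$ exactly when $\lambda\in\mathcal L$. Applying the same fact to $L=K$ gives $\int\lambda\,dm_K=\one[\lambda|_K=1]=\one[\lambda\in K^\perp]$. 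By the double annihilator identity $K^\perp=\mathcal L$ recorded above, this equals $\one[\lambda\in\mathcal L]$. The two limits therefore agree for every character.

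Combining the character-by-character convergence with density and the uniform mass bound yields $\int g\,dm_{K_j}\to\int g\,dm_K$ for all $g\in C(X)$.

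The only point needing care is the identification $K^\perp=\mathcal L$. This in turn relies on $\mathcal L$ being a subgroup of the discrete group $\widehat X$, hence automatically closed. The subgroup property holds because if $\lambda,\lambda'$ are trivial on $K_j$ for all large $j$, then so is $\lambda-\lambda'$, taking the larger of the two thresholds. The paper has already stated this identification, so the proof is essentially routine once the eventual dichotomy from the subsequence extraction is in place.
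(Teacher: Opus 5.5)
Your proof is correct and follows the paper's argument. You identify $\int\lambda\,dm_{K_j}$ as the indicator of $\lambda|_{K_j}=1$, use the eventual dichotomy together with $K^\perp=\mathcal L$ to match it with $\int\lambda\,dm_K$, and then extend from characters to all of $C(X)$ by density of trigonometric polynomials; your explicit appeal to the uniform mass bound simply spells out the step the paper compresses into ``by Stone--Weierstrass''.
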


\begin{proof}
For $\lambda\in\widehat X$,
\[
  \int_{K_j}\lambda\,dm_{K_j}
  =
  \begin{cases}
    1,&\lambda|_{K_j}=1,\\
    0,&\lambda|_{K_j}\ne1.
  \end{cases}
\]
By construction this converges to $1$ exactly for
$\lambda\in\mathcal L=K^{\perp}$, which is precisely
$\int_K\lambda\,dm_K$. The convergence therefore holds for trigonometric
polynomials and, by Stone--Weierstrass, for every continuous function on $X$.
\end{proof}

\begin{lemma}[Lower bound on the limiting core]
\label{lem:H-lower-bound}
For every $y\in K$,
\[
  \abs{H(y)}\ge\delta.
\]
\end{lemma}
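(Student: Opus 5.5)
We argue by contradiction and compare $H$ on $K$ with $h_j=H\circ\phi_j$ on $K_j$, using that $\abs{h_j}=\abs{f_j-w_j}$ is almost $\ge\delta$ in an averaged sense. The pointwise difference $w_j$ is not small uniformly; only $\norm{w_j}_2\to0$ by \eqref{eq:remainder-L2}. So the bound must be transferred through integrals against the Haar measures $m_{K_j}$, which converge weak-$*$ to $m_K$ by Lemma~\ref{lem:Haar-measure-convergence}.

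Suppose $\abs{H(y_0)}<\delta$ for some $y_0\in K$. Since $H$ is continuous on $X$, there are $\varepsilon>0$ and a neighbourhood $V$ of $y_0$ in $X$ with $\abs{H}\le\delta-2\varepsilon$ on $V$. Choose a continuous $\psi:X\to[0,1]$ supported in $V$ with $\psi(y_0)=1$. Because $m_K$ has full support on $K$ and $y_0\in K$, we have $c:=\int_K\psi\,dm_K>0$. By Lemma~\ref{lem:Haar-measure-convergence},
\[
  \int_{K_j}\psi\,dm_{K_j}\longrightarrow c,
\]
so for large $j$ this integral exceeds $c/2$.

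Pulling back by the isomorphism $\phi_j:G\to K_j$, which carries $m_G$ to $m_{K_j}$, and using \eqref{eq:H-pullback}, we set $E_j:=\{x\in G:\phi_j(x)\in V\}$ and obtain
\[
  m_G(E_j)\ge\int_G\psi\circ\phi_j\,dm_G>c/2 .
\]
On $E_j$ we have $\abs{h_j}\le\delta-2\varepsilon$, while $\abs{f_j}\ge\delta$. Hence $\abs{w_j}=\abs{f_j-h_j}\ge2\varepsilon$ on $E_j$. This gives
\[
  \norm{w_j}_2^2\ge4\varepsilon^2 m_G(E_j)\ge2\varepsilon^2c
\]
for all large $j$, which contradicts \eqref{eq:remainder-L2}.

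The main point to check carefully is the transfer step. First, $(\phi_j)_*m_G=m_{K_j}$, since $\phi_j$ is a topological isomorphism onto the compact group $K_j$. Second, $\psi$ must be a genuine continuous function on $X$, so that weak-$*$ convergence applies; this is where continuity of $H$ on all of $X$, not just on $K$, is used to pick $V$. The remaining ingredient is that Haar measure on a compact group charges every nonempty open subset, which makes $c>0$.
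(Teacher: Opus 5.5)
Your proof is correct and follows essentially the same route as the paper: pull back by $\phi_j$ (which carries $m_G$ to $m_{K_j}$), use $\norm{w_j}_2\to0$, and pass to the limit through the weak-$*$ convergence $m_{K_j}\to m_K$ together with the full support of $m_K$. The paper's version is slightly more streamlined. It tests directly against the continuous function $(\delta-\abs{H})_+^2$, using the pointwise bound $(\delta-\abs{h_j})_+\le\abs{w_j}$, which removes the need for your bump function and the argument by contradiction.
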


\begin{proof}
Since $f_j=h_j+w_j$ and $\abs{f_j}\ge\delta$,
\[
  (\delta-\abs{h_j})_+\le\abs{w_j}.
\]
The pushforward of $m_G$ under $\phi_j$ is $m_{K_j}$, so
\[
  \int_{K_j}(\delta-\abs H)_+^2\,dm_{K_j}
  =\int_G(\delta-\abs{h_j})_+^2\,dm_G
  \le\norm{w_j}_2^2\longrightarrow0.
\]
Lemma~\ref{lem:Haar-measure-convergence} gives
\[
  \int_K(\delta-\abs H)_+^2\,dm_K=0.
\]
Haar measure has full support on $K$, while the integrand is continuous and
non-negative. Hence it vanishes identically.
\end{proof}

\subsubsection{Stability under restriction}

We next need a device that transfers invertibility from the limiting subgroup $K$ back
to the approximating subgroups $K_j$. The key point is that the transfer must preserve
one-sided Fourier spectrum in an additional circle variable.

\begin{lemma}[Stability of restriction norms]
\label{lem:restriction-norm-stability}
Let $Y$ be a compact Abelian group and let $M_j,M\subset Y$ be compact subgroups
such that, for every $\lambda\in\widehat Y$, the relation $\lambda|_{M_j}=1$
eventually stabilizes to $\lambda|_M=1$. Then, for every $F\in A(Y)$,
\[
  \norm{F|_{M_j}}_{A(M_j)}\longrightarrow\norm{F|_M}_{A(M)}.
\]
The same conclusion holds for restriction from $Y\times\T$ to $M_j\times\T$ and
$M\times\T$. In particular, restricting an element with only non-negative Fourier
frequencies in the $\T$-variable preserves that one-sided spectrum.
\end{lemma}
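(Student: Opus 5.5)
The plan is to compute restriction norms explicitly through the quotient description of the dual of a closed subgroup, then to prove the convergence first for trigonometric polynomials and finally for general $F$ by density.

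\textbf{Restriction formula.} Let $M\subset Y$ be a compact subgroup. Every character of $M$ extends to a character of $Y$, and $\widehat M\cong\widehat Y/M^{\perp}$ by \cite[Theorem~2.1.2]{Rudin1990}. Write $F=\sum_{\lambda\in\widehat Y}\widehat F(\lambda)\lambda$, which converges absolutely. For $\chi\in\widehat Y$, integrate term by term and use
\[
  \int_M\lambda\overline{\chi}\,dm_M=\one_{\{\lambda-\chi\in M^\perp\}} .
\]
This gives
\[
  \widehat{F|_M}([\chi])=\sum_{\lambda\in\chi+M^{\perp}}\widehat F(\lambda),
  \qquad
  \norm{F|_M}_{A(M)}=\sum_{c\in\widehat Y/M^\perp}\Bigl|\sum_{\lambda\in c}\widehat F(\lambda)\Bigr|.
\]
By the triangle inequality, restriction $A(Y)\to A(M)$ is linear and has norm at most $1$. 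The same formula holds with $M_j$ in place of $M$.

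\textbf{Trigonometric polynomials.} Let $F$ have finite Fourier support $S$. The difference set $S-S$ is finite. By hypothesis, for each $d\in S-S$ the relation $d\in M_j^\perp$ eventually agrees with $d\in M^\perp$. Hence, for all sufficiently large $j$, two elements $\lambda,\lambda'\in S$ lie in the same $M_j^\perp$-coset exactly when they lie in the same $M^\perp$-coset. The partitions of $S$ into cosets therefore coincide, and the formula above gives $\norm{F|_{M_j}}_{A(M_j)}=\norm{F|_M}_{A(M)}$ for all large $j$.

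\textbf{General $F$.} Given $\varepsilon>0$, choose a trigonometric polynomial $P$ with $\norm{F-P}_{A(Y)}<\varepsilon$. The contractivity of restriction gives
\[
  \bigl|\norm{F|_{M_j}}-\norm{F|_M}\bigr|
  \le 2\varepsilon+\bigl|\norm{P|_{M_j}}-\norm{P|_M}\bigr|,
\]
and the last term vanishes for large $j$. Since $\varepsilon$ is arbitrary, the convergence follows.

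\textbf{The product case.} Apply the result to the compact group $Y\times\T$ and its subgroups $M_j\times\T$ and $M\times\T$. Their annihilators are $M_j^\perp\times\{0\}$ and $M^\perp\times\{0\}$. For $(\lambda,n)\in\widehat Y\times\Z$, the condition of lying in the annihilator is $\lambda\in M_j^\perp$ (respectively $\lambda\in M^\perp$) together with $n=0$. So the stabilization hypothesis transfers directly, and the convergence of restriction norms holds on $Y\times\T$ as well.

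\textbf{One-sided spectrum.} Every coset of $M^\perp\times\{0\}$ has a fixed $\Z$-coordinate. Therefore the coefficient of $F|_{M\times\T}$ at the class of $(\chi,n)$ involves only the coefficients $\widehat F(\lambda,n)$ with that same $n$. If $\widehat F$ vanishes for $n<0$, so does the transform of the restriction; the same holds for $M_j\times\T$.

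I do not expect a genuine obstacle. The only points needing care are:
\begin{itemize}
  \item justifying the term-by-term integration in the restriction formula, which follows from absolute convergence;
  \item the finiteness of $S-S$, which is exactly what lets the pointwise stabilization hypothesis on characters control whole coset partitions.
\end{itemize}
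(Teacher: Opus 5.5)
Your proposal is correct and follows essentially the same route as the paper: restriction norms of trigonometric polynomials agree exactly once the finitely many relations $(\lambda-\mu)|_{M_j}=1$ have stabilized, the general case follows by density and contractivity of restriction, and the $Y\times\T$ case holds because the annihilators are $M_j^\perp\times\{0\}$ and $M^\perp\times\{0\}$. Your explicit coset-sum formula for $\widehat{F|_M}$ spells out the grouping of frequencies and the contractivity that the paper uses without derivation.
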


\begin{proof}
Let $P=\sum_{\lambda\in E}a_\lambda\lambda$ be a trigonometric polynomial. Its
restriction norm is obtained by grouping together those frequencies that have the
same restriction. For $\lambda,\mu\in E$, this is determined by the finitely many
relations $(\lambda-\mu)|_{M_j}=1$, all of which eventually agree with the
corresponding relations for $M$. Hence, for all sufficiently large $j$,
\[
  \norm{P|_{M_j}}_{A(M_j)}=\norm{P|_M}_{A(M)}.
\]
Given $F\in A(Y)$ and $\varepsilon>0$, choose such a polynomial $P$ with
$\norm{F-P}_{A(Y)}<\varepsilon$. Since restriction is contractive, for all large
$j$,
\[
  \left|\norm{F|_{M_j}}_{A(M_j)}-\norm{F|_M}_{A(M)}\right|\le2\varepsilon.
\]
This proves the asserted convergence. For $Y\times\T$, restriction does not identify
different integer frequencies in the second coordinate, so the same proof applies and
preserves non-negative $\T$-frequencies.
\end{proof}

For a compact subgroup $M\subset Y$, the restriction map
\[
  A(Y)\longrightarrow A(M)
\]
is a contraction and a metric quotient. Indeed, by \cite[Theorem~2.1.2]{Rudin1990},
$\widehat M\cong\widehat Y/M^{\perp}$, and one obtains a norm-preserving lift by
choosing one representative from each coset of $M^{\perp}$ carrying a non-zero
Fourier coefficient.

For an additional variable $z\in\T$, put
\[
  A_+(Y\times\T)
  :=\left\{F\in A(Y\times\T):
  \widehat F(\lambda,n)=0\text{ whenever }n<0\right\}.
\]
This is a closed unital subalgebra of $A(Y\times\T)$. Since
\[
  (M\times\T)^{\perp}=M^{\perp}\times\{0\},
\]
passing to the quotient changes only the $\widehat Y$-coordinate and never the
integer degree in $z$. Consequently, every $u\in A_+(M\times\T)$ admits a lift
$U\in A_+(Y\times\T)$ satisfying
\begin{equation}
  U|_{M\times\T}=u,
  \qquad
  \norm U_{A(Y\times\T)}=\norm u_{A(M\times\T)}.
  \label{eq:positive-spectrum-lift}
\end{equation}

\begin{lemma}[Analytic pullback]
\label{lem:analytic-pullback}
Let $Y$ be a compact Abelian group and let $M_j,M\subset Y$ satisfy the
stabilization assumption of Lemma~\ref{lem:restriction-norm-stability}. Let
$F\in A_+(Y\times\T)$. Assume that $F|_{M\times\T}$ is invertible and
\begin{equation}
  (F|_{M\times\T})^{-1}\in A_+(M\times\T).
  \label{eq:limit-inverse-positive-spectrum}
\end{equation}
Then, for all sufficiently large $j$, $F|_{M_j\times\T}$ is invertible,
\begin{equation}
  \sup_{j\ge j_0}
  \norm{(F|_{M_j\times\T})^{-1}}_{A(M_j\times\T)}<\infty,
  \label{eq:uniform-analytic-pullback}
\end{equation}
and
\begin{equation}
  (F|_{M_j\times\T})^{-1}\in A_+(M_j\times\T).
  \label{eq:positive-spectrum-pullback}
\end{equation}
\end{lemma}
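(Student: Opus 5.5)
Let $u:=(F|_{M\times\T})^{-1}\in A_+(M\times\T)$. Using \eqref{eq:positive-spectrum-lift}, I lift $u$ to $U\in A_+(Y\times\T)$ with $U|_{M\times\T}=u$ and $\norm U_{A(Y\times\T)}=\norm u_{A(M\times\T)}$. Then set
\[
  D:=\one-FU\in A_+(Y\times\T).
\]
This lies in $A_+$ because $A_+(Y\times\T)$ is a unital subalgebra. By construction $D|_{M\times\T}=\one-F|_{M\times\T}\,u=0$, so $\norm{D|_{M\times\T}}_{A(M\times\T)}=0$.

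Next I apply Lemma~\ref{lem:restriction-norm-stability}, in its $Y\times\T$ version, to the single function $D$. It gives
\[
  \norm{D|_{M_j\times\T}}_{A(M_j\times\T)}\longrightarrow0 .
\]
So there is $j_0$ such that $\norm{D|_{M_j\times\T}}\le\frac12$ for all $j\ge j_0$. For such $j$, the element $F_jU_j=\one-D_j$ is invertible in $A(M_j\times\T)$ by the Neumann series, where $F_j,U_j,D_j$ denote the restrictions to $M_j\times\T$. The series is
\[
  (\one-D_j)^{-1}=\sum_{k\ge0}D_j^k,
\]
and its norm is at most $2$. In a commutative algebra, $F_jU_j$ invertible makes $F_j$ invertible with
\[
  F_j^{-1}=U_j(\one-D_j)^{-1}.
\]
Restriction is contractive, so
\[
  \norm{F_j^{-1}}\le 2\norm{U_j}\le2\norm U_{A(Y\times\T)}=2\norm u_{A(M\times\T)}.
\]
This bound is independent of $j$, which gives \eqref{eq:uniform-analytic-pullback}.

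For the one-sided spectrum \eqref{eq:positive-spectrum-pullback}, I use the last clause of Lemma~\ref{lem:restriction-norm-stability}: restriction to $M_j\times\T$ preserves non-negative $z$-frequencies. Hence $U_j$ and $D_j$ belong to $A_+(M_j\times\T)$. This space is a closed unital subalgebra, since products of non-negative frequencies are non-negative and the constraint is closed in $\ell^1$. Every partial sum of the Neumann series therefore lies in $A_+(M_j\times\T)$, and so does the norm limit. Consequently $F_j^{-1}=U_j\sum_k D_j^k\in A_+(M_j\times\T)$.

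The main point needing care is the convergence of $\norm{D|_{M_j\times\T}}$ to zero. Convergence of restriction norms for a fixed function is exactly the content of Lemma~\ref{lem:restriction-norm-stability}, and it applies because the stabilization hypothesis concerns only $\widehat Y$, while the $\Z$-coordinate is never identified. The remaining ingredient is the existence of a lift that stays in $A_+$. This is \eqref{eq:positive-spectrum-lift}, which holds because $(M\times\T)^\perp=M^\perp\times\{0\}$. With these two facts the argument is a routine Neumann-series perturbation. No uniform invertibility of $F$ on $Y\times\T$ is needed, only on the limit subgroup.
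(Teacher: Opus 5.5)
Your proof is correct and is essentially the paper's argument. The paper also lifts the limiting inverse to $Q\in A_+(Y\times\T)$ via \eqref{eq:positive-spectrum-lift}, sets $R:=QF-1$ (your $-D$), uses Lemma~\ref{lem:restriction-norm-stability} to get $\norm{R|_{M_j\times\T}}\to0$, and concludes with the Neumann series, the bound $2\norm Q$, and preservation of the one-sided spectrum under restriction.
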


\begin{proof}
Let
\[
  u:=(F|_{M\times\T})^{-1}\in A_+(M\times\T)
\]
and choose, using \eqref{eq:positive-spectrum-lift}, a norm-preserving lift
$Q\in A_+(Y\times\T)$. Set
\[
  R:=QF-1.
\]
Then $R\in A_+(Y\times\T)$ and $R|_{M\times\T}=0$. Since $R|_{M\times\T}=0$, Lemma~\ref{lem:restriction-norm-stability}
applied on $Y\times\T$ gives
\begin{equation}
  \norm{R|_{M_j\times\T}}_{A(M_j\times\T)}\longrightarrow0.
  \label{eq:R-restrictions-to-zero}
\end{equation}
For large $j$ put
$R_j=R|_{M_j\times\T}$ and assume $\norm{R_j}<1/2$. Then
\[
  (1+R_j)^{-1}=\sum_{k=0}^{\infty}(-R_j)^k
\]
converges in $A_+(M_j\times\T)$, and
\[
  (F|_{M_j\times\T})^{-1}
  =(1+R_j)^{-1}(Q|_{M_j\times\T}).
\]
This gives \eqref{eq:positive-spectrum-pullback} and, since restriction is
contractive,
\[
  \norm{(F|_{M_j\times\T})^{-1}}
  \le2\norm Q,
\]
which proves \eqref{eq:uniform-analytic-pullback}.
\end{proof}

The preservation of the one-sided $z$-spectrum is crucial. For a disconnected group,
one cannot argue that a continuous function avoiding the circle $\abs u=\rho$ lies
globally on one side of that circle. The spectral information retained in
Lemma~\ref{lem:analytic-pullback} replaces this topological argument.

\subsubsection{The mass gap and the resolvent}

Let
\[
  P:=\sum_{r\in I}\norm{G_r}_{A(G)}.
\]
The mass balance \eqref{eq:profile-mass-balance} gives
\[
  P+\beta=L\le1.
\]
On the other hand, Lemma~\ref{lem:H-lower-bound} and
$\norm H_{A(X)}=P$ give the direct comparison
\[
  \delta
  \le\norm{H|_K}_{\infty}
  \le\norm H_{A(X)}
  =P.
\]
Thus $P\ge\delta$, and hence
\begin{equation}
  \beta\le1-P\le1-\delta<\delta.
  \label{eq:key-mass-gap}
\end{equation}
This is the unique point at which the strict assumption $\delta>1/2$ enters the
argument. Choose
\begin{equation}
  \beta<\rho<\delta.
  \label{eq:rho-choice}
\end{equation}

On $X\times\T$ define
\begin{equation}
  \mathcal R_{\rho}(y,z):=H(y)-\rho z.
  \label{eq:limit-resolvent}
\end{equation}
This belongs to $A_+(X\times\T)$. On $K\times\T$, Lemma~\ref{lem:H-lower-bound}
gives
\[
  \abs{\mathcal R_{\rho}(y,z)}
  \ge\abs{H(y)}-\rho
  \ge\delta-\rho>0.
\]
The qualitative Wiener lemma therefore yields
\[
  U:=(\mathcal R_{\rho}|_{K\times\T})^{-1}
  \in A(K\times\T).
\]

\begin{lemma}[Positive $z$-spectrum of the limiting inverse]
\label{lem:limit-positive-spectrum}
One has
\[
  U\in A_+(K\times\T).
\]
\end{lemma}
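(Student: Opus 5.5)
We need to show that the inverse of $H(y)-\rho z$ on $K\times\T$ has only non-negative $z$-frequencies. The natural route is to invert the resolvent in powers of $z$: formally
\[
  (H-\rho z)^{-1}=\sum_{k\ge0}\rho^k z^k H^{-k-1},
\]
valid as soon as $H|_K$ is invertible in $A(K)$ with spectral radius of $(H|_K)^{-1}$ strictly less than $1/\rho$. The plan is therefore to show that $H|_K$ is invertible in $A(K)$ and that $r\bigl((H|_K)^{-1}\bigr)<1/\rho$. Granting this, the series converges absolutely in $A(K\times\T)$ because $\norm{z^kH^{-k-1}}=\norm{H^{-k-1}}_{A(K)}$. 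Each term lies in $A_+(K\times\T)$ and $A_+$ is closed, so the sum lies in $A_+$. Multiplying the series by $H-\rho z$ telescopes to $1$, so by uniqueness of inverses the sum equals $U$.

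\textbf{Invertibility of $H|_K$ with the right spectral radius.} By Lemma~\ref{lem:H-lower-bound}, $\abs{H(y)}\ge\delta$ on $K$. The algebra $A(K)\cong\ell^1(\widehat K)$ has maximal ideal space $K$, since $\widehat K$ is discrete and its dual is $K$. By the Wiener lemma (Gelfand theory), $H|_K$ is therefore invertible. The Gelfand transform of $(H|_K)^{-1}$ is $1/H$ on $K$, so its spectral radius is
\[
  r\bigl((H|_K)^{-1}\bigr)=\sup_K 1/\abs H\le 1/\delta<1/\rho,
\]
using $\rho<\delta$ from \eqref{eq:rho-choice}. Gelfand's formula then gives $\norm{(H|_K)^{-k-1}}^{1/k}\to r\le1/\delta$. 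Hence $\sum_k\rho^k\norm{H^{-k-1}}<\infty$ by the root test, since $\rho/\delta<1$.

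\textbf{An alternative view.} Equivalently, one can argue via the maximal ideal space of $A_+(K\times\T)$, which is $K\times\overline{\mathbb D}$, the closed-disk analogue for the $z$ variable. On $K\times\overline{\mathbb D}$ one has $\abs{H-\rho z}\ge\delta-\rho>0$, so $H-\rho z$ is invertible already within $A_+(K\times\T)$. Its inverse there coincides with $U$ by uniqueness of inverses in $A(K\times\T)$. I prefer the explicit series, which avoids identifying that maximal ideal space.

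\textbf{Main obstacle.} The only delicate point is getting a root-test estimate uniform enough to sum the series. This is exactly what the spectral radius formula supplies, and the strict inequality $\rho<\delta$ is what makes it work. Everything else is routine: closedness of $A_+$, and the fact that multiplication by $z^k$ is an isometry on $A(K\times\T)$ that shifts the $z$-degree by $k$.
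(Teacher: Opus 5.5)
Your proof is correct, but it takes a different route from the paper.

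\textbf{The paper's route.} The paper works fiberwise. It takes $U$ as already given by the qualitative Wiener lemma on $K\times\T$ and groups its Fourier series as $\sum_n U_n(y)z^n$. For each fixed $y\in K$, the scalar function $z\mapsto 1/(H(y)-\rho z)$ is holomorphic on the disc $\abs z<\abs{H(y)}/\rho$, whose radius exceeds $1$. Hence its negative Fourier coefficients $U_n(y)$, $n<0$, vanish, and uniqueness of coefficients gives $U_n=0$ for $n<0$. This needs only the pointwise bound $\abs H\ge\delta>\rho$ on $K$, with no norm control on powers of $(H|_K)^{-1}$. It is the same fiberwise device the paper reuses in Lemmas~\ref{lem:h-outside-rho} and~\ref{lem:inverse-power-identity}.

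\textbf{Your route.} You build the inverse globally as the norm-convergent series $\sum_{k\ge0}\rho^kz^k(H|_K)^{-k-1}$. This costs the Wiener lemma on $K$ together with Gelfand's spectral radius formula, to get $\sum_k\rho^k\norm{(H|_K)^{-k-1}}_{A(K)}<\infty$. It is the same summation technique the paper later uses for the logarithmic series in the Toeplitz section.

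\textbf{What each buys.} In exchange for the extra machinery, you get invertibility directly inside $A_+(K\times\T)$, so the Wiener lemma on $K\times\T$ is not needed at all. You also get the explicit expansion and, since the terms have disjoint $z$-degrees, the exact identity $\norm U_{A(K\times\T)}=\sum_k\rho^k\norm{(H|_K)^{-k-1}}_{A(K)}$. That identity is the limiting analogue of Lemma~\ref{lem:inverse-power-identity}. The paper's argument is shorter and more elementary.
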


\begin{proof}
Write the absolutely convergent Fourier series of $U$ grouped by the $z$-degree,
\[
  U(y,z)=\sum_{n\in\Z}U_n(y)z^n,
  \qquad
  U_n\in A(K),
  \qquad
  \sum_n\norm{U_n}_{A(K)}<\infty.
\]
For fixed $y\in K$ we have $\abs{H(y)}\ge\delta>\rho$, so
\[
  U(y,z)=\frac{1}{H(y)-\rho z}
\]
is holomorphic for $\abs z<\abs{H(y)}/\rho>1$. Its Fourier coefficients of
negative degree therefore vanish. By uniqueness of Fourier coefficients,
$U_n(y)=0$ for all $n<0$ and all $y\in K$, so $U_n=0$ for $n<0$.
\end{proof}

Apply Lemma~\ref{lem:analytic-pullback} with
\[
  Y=X,
  \qquad
  M_j=K_j,
  \qquad
  M=K,
  \qquad
  F=\mathcal R_{\rho}.
\]
For all sufficiently large $j$ we obtain
\begin{equation}
  \sup_j
  \norm{(\mathcal R_{\rho}|_{K_j\times\T})^{-1}}_{A(K_j\times\T)}<\infty
  \label{eq:uniform-resolvent-on-Kj}
\end{equation}
and
\begin{equation}
  (\mathcal R_{\rho}|_{K_j\times\T})^{-1}
  \in A_+(K_j\times\T).
  \label{eq:positive-resolvent-on-Kj}
\end{equation}
The group isomorphism
\[
  \phi_j\times\operatorname{id}_{\T}:
  G\times\T\longrightarrow K_j\times\T
\]
induces an isometry of Fourier algebras and pulls $\mathcal R_{\rho}$ back to
$(x,z)\mapsto h_j(x)-\rho z$. Hence
\begin{equation}
  V_j(x,z):=(h_j(x)-\rho z)^{-1}
  \in A_+(G\times\T)
  \label{eq:Vj-positive-spectrum}
\end{equation}
and
\begin{equation}
  \sup_j\norm{V_j}_{A(G\times\T)}<\infty.
  \label{eq:Vj-uniform-bound}
\end{equation}

\begin{lemma}
\label{lem:h-outside-rho}
For all sufficiently large $j$,
\[
  \inf_{x\in G}\abs{h_j(x)}>\rho.
\]
\end{lemma}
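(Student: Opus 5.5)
We must show that for large $j$ we have $\inf_G\abs{h_j}>\rho$. The plan is to read this off from the one-sided resolvent $V_j$ of \eqref{eq:Vj-positive-spectrum}, and not from any direct comparison of $h_j$ with $f_j$; the latter only gives $L^2$ information because $\norm{w_j}_2\to0$.

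Fix $j$ large enough that \eqref{eq:Vj-positive-spectrum} holds. For each fixed $x\in G$, the function $z\mapsto h_j(x)-\rho z$ has an inverse $V_j(x,\cdot)$ in $A(\T)$. Grouping the Fourier series of $V_j$ by $z$-degree, as in the proof of Lemma~\ref{lem:limit-positive-spectrum}, the function $V_j(x,\cdot)$ has only non-negative frequencies. It therefore extends continuously to the closed disc and holomorphically to the open disc, through the absolutely convergent power series $\sum_{n\ge0}V_{j,n}(x)z^n$. Call this extension $\widetilde V(z)$.

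The identity $(h_j(x)-\rho z)\widetilde V(z)=1$ holds on $\T$. Both sides are continuous on the closed disc and holomorphic inside, so the identity extends to the whole closed disc. Hence $h_j(x)-\rho z\ne0$ for every $\abs z\le1$, that is, $\abs{h_j(x)}\ne\rho t$ for $t\in[0,1]$. This gives $\abs{h_j(x)}>\rho$ pointwise.

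Pointwise strict inequality is not enough, because we need a uniform margin. For this we use the sup bound. At $z=0$ we get $\abs{h_j(x)}^{-1}=\abs{\widetilde V(0)}\le\norm{V_j}_{A(G\times\T)}$, which only gives $\abs{h_j}\ge c>0$. To get a margin above $\rho$, evaluate at $z=\overline{\operatorname{sgn}h_j(x)}\cdot t$ with $t\uparrow1$. Then
\[
  \frac{1}{\abs{h_j(x)}-\rho t}=\abs{\widetilde V(z)}\le \norm{V_j}_{A(G\times\T)}\le C
\]
by \eqref{eq:Vj-uniform-bound}. Letting $t\to1$ gives $\abs{h_j(x)}-\rho\ge 1/C$ uniformly in $x$ and $j$. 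So $\inf_G\abs{h_j}\ge\rho+1/C>\rho$.

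The main obstacle is making sure the holomorphic extension is legitimate pointwise in $x$, so that the $A$-norm controls the sup on the closed disc. This is immediate once $\sum_n\norm{V_{j,n}}_\infty\le\norm{V_j}_{A(G\times\T)}$ and the negative coefficients vanish. We therefore invoke \eqref{eq:Vj-positive-spectrum} and \eqref{eq:Vj-uniform-bound} and nothing else.

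As a sanity check independent of this argument, the strict inequality can also be seen from continuity: $\abs{h_j}$ is continuous on the connected set $\{h_j(x)-\rho z\}$. That route fails for disconnected $G$, which is exactly why the spectral argument above is used.
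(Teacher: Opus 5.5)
Your argument is correct and uses the same mechanism as the paper: the non-negative $z$-spectrum of $V_j$, read pointwise in $x$, rules out $|h_j(x)|\le\rho$. The execution differs slightly in two places.

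First, the exclusion step. The paper argues by contradiction. Invertibility excludes $|h_j(x_0)|=\rho$. If $|h_j(x_0)|<\rho$, then the Laurent expansion of $1/(h_j(x_0)-\rho z)$ on $\T$ has $z^{-1}$-coefficient $-1/\rho$, which contradicts the one-sided spectrum. You instead extend $V_j(x,\cdot)$ holomorphically to the closed disc and conclude that $h_j(x)-\rho z$ has no zero there. The two versions are equivalent and equally short.

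Second, the passage to the infimum, which is the real difference. The paper gets $\min_G|h_j|>\rho$ from continuity of $h_j$ and compactness of $G$. So, contrary to your remark, pointwise strictness is already enough for the lemma as stated. You instead extract the quantitative margin $\inf_G|h_j|\ge\rho+1/C$, with $C$ from \eqref{eq:Vj-uniform-bound} and uniform in $j$. This is more than the lemma needs, but it is harmless.

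In that margin step, the evaluation point must be $z=t\,\operatorname{sgn}h_j(x)$, not its complex conjugate. With the conjugate, your displayed equality fails: you only get $|\widetilde V(z)|\le 1/(|h_j(x)|-\rho t)$, which is the wrong direction for bounding the margin. Also, $z=\operatorname{sgn}h_j(x)$ already lies on $\T$, so the limit $t\uparrow1$ is unnecessary.

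Drop the closing ``sanity check'' paragraph: as written it is not a coherent argument.
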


\begin{proof}
The invertibility of $h_j(x)-\rho z$ on $G\times\T$ first implies
$\abs{h_j(x)}\ne\rho$ for all $x$. If, for some $x_0\in G$,
$\abs{h_j(x_0)}<\rho$, then the scalar function of $z$ has the absolutely
convergent Laurent expansion
\[
  \frac{1}{h_j(x_0)-\rho z}
  =-\sum_{k=0}^{\infty}
  h_j(x_0)^k\rho^{-k-1}z^{-k-1}.
\]
In particular, the coefficient of $z^{-1}$ equals $-1/\rho$. This contradicts
\eqref{eq:Vj-positive-spectrum}, because evaluation at $x_0$ cannot create a negative
$z$-frequency. Thus $\abs{h_j(x)}>\rho$ everywhere. Since $G$ is compact and $h_j$ is
continuous, it follows in fact that $\min_{x\in G}\abs{h_j(x)}>\rho$.
\end{proof}

\begin{lemma}[Resolvent identity for inverse powers]
\label{lem:inverse-power-identity}
For all sufficiently large $j$,
\begin{equation}
  \norm{(h_j-\rho z)^{-1}}_{A(G\times\T)}
  =\sum_{k=0}^{\infty}
  \rho^k\norm{h_j^{-k-1}}_{A(G)}.
  \label{eq:inverse-power-identity}
\end{equation}
\end{lemma}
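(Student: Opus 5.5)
The plan is to identify the $z^n$-coefficient of $V_j=(h_j-\rho z)^{-1}$ explicitly, and then use the fact that the $A(G\times\T)$-norm splits over the $z$-degree.

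Fix $j$ large enough that Lemma~\ref{lem:h-outside-rho} applies. Then $m_j:=\min_G\abs{h_j}>\rho$. By the qualitative Wiener lemma on the compact group $G$, $h_j$ is invertible in $A(G)$.

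Next, write the Fourier series of $V_j$ grouped by $z$-degree,
\[
  V_j(x,z)=\sum_{n\ge0}V_{j,n}(x)z^n,
  \qquad
  \norm{V_j}_{A(G\times\T)}=\sum_{n\ge0}\norm{V_{j,n}}_{A(G)}.
\]
Negative degrees are absent by \eqref{eq:Vj-positive-spectrum}. The norm identity holds because $\widehat{G\times\T}=\Gamma\oplus\Z$, so the $\ell^1$-norm of the coefficients is the sum over $n$ of the $\ell^1$-norms of the slices.

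For each fixed $x\in G$, the function $z\mapsto V_j(x,z)$ has Fourier coefficients $V_{j,n}(x)$. Since $\abs{h_j(x)}>\rho$, it also equals the geometric series
\[
  \frac{1}{h_j(x)-\rho z}=\sum_{k\ge0}\rho^k h_j(x)^{-k-1}z^k,
\]
which converges absolutely and uniformly on $\abs z=1$. By uniqueness of Fourier coefficients on $\T$, $V_{j,n}(x)=\rho^n h_j(x)^{-n-1}$ for every $x$. Hence $V_{j,n}=\rho^n h_j^{-n-1}$ as elements of $A(G)$. Both sides are continuous functions, and the right side lies in $A(G)$ because $h_j^{-1}\in A(G)$.

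Substituting into the norm identity gives \eqref{eq:inverse-power-identity}. Both sides may be compared as sums of nonnegative terms, and the left side is finite by \eqref{eq:Vj-uniform-bound}, so no convergence issue arises.

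The only step needing care is identifying the slice coefficients $V_{j,n}$ pointwise. This requires that slicing the absolutely convergent Fourier series of $V_j$ at fixed $x$ yields the Fourier series in $z$. That holds because absolute summability lets us integrate $V_j(x,z)\bar z^n$ termwise.
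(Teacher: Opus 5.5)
Your proposal is correct and matches the paper's proof: you slice $V_j$ by $z$-degree using its one-sided spectrum, identify each slice as $\rho^n h_j^{-n-1}$ through the pointwise geometric expansion (valid because $\inf_G|h_j|>\rho$ by Lemma~\ref{lem:h-outside-rho}) and uniqueness of Fourier coefficients in $z$, and then sum the $A(G)$-norms. Your added justifications, namely termwise integration for the slicing and the splitting of the norm over $\Gamma\oplus\Z$, are details the paper leaves implicit.
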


\begin{proof}
By Lemma~\ref{lem:h-outside-rho}, $\inf_G\abs{h_j}>\rho$. Since
$V_j\in A_+(G\times\T)$, write
\[
  V_j(x,z)=\sum_{k=0}^{\infty}A_{j,k}(x)z^k,
  \qquad
  \norm{V_j}_{A(G\times\T)}
  =\sum_{k=0}^{\infty}\norm{A_{j,k}}_{A(G)}.
\]
For fixed $x$, the geometric expansion gives
\[
  \frac{1}{h_j(x)-\rho z}
  =\sum_{k=0}^{\infty}\rho^kz^kh_j(x)^{-k-1},
  \qquad \abs z\le1.
\]
Uniqueness of the Fourier coefficients in $z$ yields
$A_{j,k}=\rho^kh_j^{-k-1}$, and summing the norms proves
\eqref{eq:inverse-power-identity}.
\end{proof}

\subsubsection{Completion of the proof}

Choose $r$ with $\beta<r<\rho$. By \eqref{eq:remainder-properties}, after
discarding finitely many indices we have
\begin{equation}
  \norm{w_j}_{A(G)}\le r<\rho.
  \label{eq:w-less-than-rho}
\end{equation}
Consider
\begin{equation}
  S_j:=\sum_{k=0}^{\infty}(-1)^kh_j^{-k-1}w_j^k.
  \label{eq:final-inverse-series}
\end{equation}
Using Lemma~\ref{lem:inverse-power-identity},
\begin{align*}
  \sum_{k=0}^{\infty}
  \norm{h_j^{-k-1}w_j^k}_{A(G)}
  &\le
  \sum_{k=0}^{\infty}
  \norm{h_j^{-k-1}}_{A(G)}\norm{w_j}_{A(G)}^k\\
  &\le
  \sum_{k=0}^{\infty}
  \rho^k\norm{h_j^{-k-1}}_{A(G)}\\
  &=\norm{(h_j-\rho z)^{-1}}_{A(G\times\T)}.
\end{align*}
The right-hand side is uniformly bounded by \eqref{eq:Vj-uniform-bound}. More
precisely, if the common bound there is $C$, then for every $N$
\[
  \sum_{k\ge N}\norm{h_j^{-k-1}w_j^k}_{A(G)}
  \le\left(\frac r\rho\right)^N
  \sum_{k\ge N}\rho^k\norm{h_j^{-k-1}}_{A(G)}
  \le C\left(\frac r\rho\right)^N.
\]
Hence the series \eqref{eq:final-inverse-series} converges absolutely, with tails
uniform in $j$, and the norms $\norm{S_j}_{A(G)}$ are uniformly bounded.

For its $N$th partial sum $S_{j,N}$, direct telescoping gives
\[
  (h_j+w_j)S_{j,N}
  =1+(-1)^Nh_j^{-N-1}w_j^{N+1}.
\]
Absolute convergence implies
$\norm{h_j^{-N-1}w_j^N}_{A(G)}\to0$. Since $\norm{w_j}_{A(G)}<\rho$ for all
large $j$,
\[
  \norm{h_j^{-N-1}w_j^{N+1}}_{A(G)}
  \le
  \norm{w_j}_{A(G)}
  \norm{h_j^{-N-1}w_j^N}_{A(G)}
  \longrightarrow0,
\]
and therefore the last term tends to zero as $N\to\infty$. Thus
\[
  (h_j+w_j)S_j=1.
\]
Since $f_j=h_j+w_j$, we obtain $S_j=f_j^{-1}$, with
$\sup_j\norm{f_j^{-1}}_{A(G)}<\infty$. This contradicts
\eqref{eq:bad-fixed-group} and proves
Proposition~\ref{prop:fixed-group-no-bad-sequence}.

Finally, if the constant in Theorem~\ref{thm:uniform-fourier-inversion} were not
uniform in the underlying compact group, the product construction at the beginning of
the proof would produce a bad sequence in one fixed compact group. This contradiction
proves Theorem~\ref{thm:uniform-fourier-inversion}.

\begin{remark}
Apart from the preliminary use of the qualitative Wiener lemma to identify individual
inverses, its decisive use in the compactness argument is the inversion of the limiting
resolvent $\mathcal R_{\rho}|_{K\times\T}$. No quantitative inversion estimate is
assumed at any stage. The strict inequality $\delta>1/2$
enters precisely through the mass gap \eqref{eq:key-mass-gap}, which allows one to
choose $\beta<\rho<\delta$. The use of the one-sided algebra $A_+$ is what makes the
proof valid for disconnected compact groups as well. If $G$ is connected, this
one-sided-spectrum step can be omitted: once $h_j-\rho z$ is invertible on
$G\times\T$, connectedness and
$\liminf_j\norm{h_j}_2\ge\delta>\rho$ force $\abs{h_j}>\rho$ everywhere for all
sufficiently large $j$.
\end{remark}

\subsection{Measure algebras}
\label{subsec:measure-algebra-positive}

We now transfer the result of the preceding subsection from Fourier algebras of
compact Abelian groups to measure algebras of arbitrary locally compact Abelian
groups. A direct application of Theorem~\ref{thm:uniform-fourier-inversion} is not
quite sufficient: after decomposing a measure into its discrete and continuous parts,
one needs simultaneous control of all powers of the inverse of the discrete part. The
following strengthened form of the argument from Subsection~\ref{subsec:fourier-algebra-positive}
is tailored precisely to this purpose.

\begin{proposition}[Budgeted resolvent estimate]
\label{prop:budgeted-resolvent}
For every $\delta>1/2$ there exists a constant $R_A(\delta)<\infty$ such that for
every compact Abelian group $G$, every $a\in A(G)$ and every $s\ge0$ satisfying
\begin{equation}
  \norm{a}_{A(G)}+s\le1,
  \qquad
  \inf_{x\in G}\abs{a(x)}\ge\delta,
  \label{eq:budgeted-assumptions}
\end{equation}
one has
\begin{equation}
  \norm{(a-sz)^{-1}}_{A(G\times\T)}\le R_A(\delta),
  \label{eq:budgeted-resolvent-bound}
\end{equation}
where $z$ denotes the standard character of the second factor $\T$. Moreover,
\begin{equation}
  \norm{(a-sz)^{-1}}_{A(G\times\T)}
  =\sum_{n=0}^{\infty}s^n\norm{a^{-n-1}}_{A(G)}.
  \label{eq:budgeted-power-identity}
\end{equation}
When $s=0$, the series in \eqref{eq:budgeted-power-identity} is understood as
$\norm{a^{-1}}_{A(G)}$. The constant $R_A(\delta)$ is independent of $G$, $a$ and $s$.
\end{proposition}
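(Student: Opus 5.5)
The argument has two parts: the exact identity \eqref{eq:budgeted-power-identity}, which is soft, and the uniform bound \eqref{eq:budgeted-resolvent-bound}, which I would prove by contradiction along the lines of Theorem~\ref{thm:uniform-fourier-inversion}, absorbing the term $sz$ into the remainder.

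\emph{The identity.} The hypotheses give $\delta\le\norm a_\infty\le\norm a_{A(G)}\le 1-s$. Hence
\[
  s\le1-\delta<\delta\le\inf_G\abs a .
\]
Consequently $\abs{a(x)-sz}\ge\delta-s>0$ on $G\times\T$. The qualitative Wiener lemma then gives $(a-sz)^{-1}\in A(G\times\T)$. Since $\abs{a(x)}>s$, the map $z\mapsto(a(x)-sz)^{-1}$ is holomorphic on a disc of radius larger than $1$. The argument of Lemma~\ref{lem:limit-positive-spectrum} therefore shows that $(a-sz)^{-1}\in A_+(G\times\T)$. Write its expansion in $z$ as $\sum_k A_k(x)z^k$ and compare it with the pointwise geometric series $\sum_k s^kz^ka(x)^{-k-1}$. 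Uniqueness of Fourier coefficients, exactly as in Lemma~\ref{lem:inverse-power-identity}, gives $A_k=s^ka^{-k-1}$, and summing the norms yields \eqref{eq:budgeted-power-identity}. For $s=0$ the identity is trivial.

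\emph{The bound, by contradiction.} Suppose there are compact groups $G_j$ and pairs $(a_j,s_j)$ satisfying \eqref{eq:budgeted-assumptions} with $\norm{(a_j-s_jz)^{-1}}\to\infty$.
\begin{enumerate}[label=\textup{(\arabic*)}]
\item Pull everything back to $\prod_jG_j$; pullback is isometric on $A(\cdot\times\T)$. Then pass to the quotient with countable dual as in Lemma~\ref{lem:countable-dual-reduction}. The finite-dual case is handled by compactness, since the admissible set of pairs $(a,s)$ is then compact.
\item Pass to a subsequence with $s_j\to s$, and apply Theorem~\ref{thm:profile-decomposition} to $(a_j)$. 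This gives $a_j=h_j+w_j$, the limits $L$ and $\beta$, and the profile mass $P$.
\item The mass balance and $L\le1-s$ give $P+\beta+s\le1$. Lemma~\ref{lem:H-lower-bound} holds verbatim, so $P\ge\delta$. Hence
\[
  \beta+s\le1-\delta<\delta .
\]
This budgeted mass gap is where $\delta>1/2$ enters.
\item Choose $\beta+s<r<\rho<\delta$. The limiting resolvent $H-\rho z$ on $K\times\T$, Lemma~\ref{lem:analytic-pullback}, Lemma~\ref{lem:h-outside-rho} and Lemma~\ref{lem:inverse-power-identity} apply unchanged, because they only involve the profile core $h_j$. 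They give
\[
  \sup_j\sum_k\rho^k\norm{h_j^{-k-1}}_{A(G)}=:C<\infty .
\]
\end{enumerate}

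\emph{Completing the contradiction.} On $G\times\T$ write
\[
  a_j-s_jz=h_j+v_j,\qquad v_j:=w_j-s_jz .
\]
The $z$-degrees of $w_j$ and $s_jz$ are different, so the norm splits: $\norm{v_j}_{A(G\times\T)}=\norm{w_j}_{A(G)}+s_j\to\beta+s$. For large $j$ this is at most $r$. Since $h_j$ does not depend on $z$, $\norm{h_j^{-k-1}}_{A(G\times\T)}=\norm{h_j^{-k-1}}_{A(G)}$. The series $\sum_k(-1)^kh_j^{-k-1}v_j^k$ therefore converges absolutely in $A(G\times\T)$, with norm at most $\sum_k r^k\norm{h_j^{-k-1}}\le C$. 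The telescoping argument from the completion of the proof of Theorem~\ref{thm:uniform-fourier-inversion} identifies this series with $(a_j-s_jz)^{-1}$. This contradicts the assumed blow-up.

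I expect the main obstacle to be checking that the varying parameter $s_j$ interacts correctly with the compactness, namely that the remainder really is $v_j=w_j-s_jz$ and that its norm splits as claimed. This holds because $a_j$ has only $z$-degree $0$, so the two pieces occupy disjoint frequencies. Everything else is a direct reuse of the Fourier-algebra argument.
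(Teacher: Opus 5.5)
Your proposal is correct and follows the paper's proof essentially step for step. The common steps are: reduction to one group with countable dual, the profile decomposition of $a_j$, the budgeted mass gap $\beta+s\le1-\delta<\delta$, and pulling the one-sided limiting resolvent $H-\rho z$ back to $h_j-\rho z$. The final step is also the same: absorbing $w_j-s_jz$, whose norm splits by disjointness of $z$-degrees, via the inverse-power series, with the identity \eqref{eq:budgeted-power-identity} obtained by the same uniqueness-of-coefficients argument. The only minor deviation is that you derive $P\ge\delta$ from Lemma~\ref{lem:H-lower-bound} and $\norm{H}_{A(X)}=P$, as in the proof of Theorem~\ref{thm:uniform-fourier-inversion}. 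The paper's proof of this proposition instead uses the energy balance $\delta^2\le E\le P^2$; both routes are valid.
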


\begin{proof}
Suppose that the assertion fails for some fixed $\delta>1/2$. Then there are compact
Abelian groups $G_j$, functions $a_j\in A(G_j)$ and numbers $s_j\ge0$ such that
\[
  \norm{a_j}_{A(G_j)}+s_j\le1,
  \qquad
  \inf_{G_j}|a_j|\ge\delta,
\]
and
\[
  \norm{(a_j-s_jz)^{-1}}_{A(G_j\times\T)}\longrightarrow\infty.
\]
The inverses exist because
\[
  s_j\le1-\norm{a_j}_{A(G_j)}
  \le1-\delta<\delta\le|a_j|.
\]

We first reduce to one compact group, keeping the auxiliary circle variable. Put
\[
  G:=\prod_{j\ge1}G_j
\]
and let $\pi_j:G\to G_j$ be the coordinate projection. Define
\[
  A_j:=a_j\circ\pi_j.
\]
The pullback $\pi_j^*:A(G_j)\to A(G)$ is isometric, and so is the pullback by
$\pi_j\times\mathrm{id}_{\T}$. Hence
\[
  A_j(x)-s_jz
  =
  (a_j-s_jz)\circ(\pi_j\times\mathrm{id}_{\T}),
\]
and
\[
  \norm{(A_j-s_jz)^{-1}}_{A(G\times\T)}
  =
  \norm{(a_j-s_jz)^{-1}}_{A(G_j\times\T)}.
\]
Thus we may assume that all functions live on one compact group $G$.

Let $\Gamma_0$ be the subgroup of $\widehat G$ generated by the union of the
Fourier supports of all $a_j$. It is countable. Passing to the quotient
$G_0=G/\Gamma_0^\perp$ identifies each $a_j$ isometrically with an element of
$A(G_0)$; the same is true of $a_j-s_jz$ and its inverse on $G_0\times\T$.
Consequently we may also assume that $\widehat G$ is countable.

Passing to a subsequence, let
\[
  s_j\to s,
  \qquad
  \norm{a_j}_{A(G)}\to L,
  \qquad
  \norm{a_j}_2^2\to E.
\]
Apply Theorem~\ref{thm:profile-decomposition} to $(a_j)$:
\[
  a_j=h_j+w_j,
  \qquad
  \norm{w_j}_{A(G)}\to\beta,
\]
and write
\[
  P:=\sum_r\norm{G_r}_{A(G)}.
\]
The mass identity \eqref{eq:profile-mass-balance} gives $L=P+\beta$. Since
$L+s\le1$,
\begin{equation}
  P+\beta+s\le1.
  \label{eq:budgeted-total-mass}
\end{equation}
Moreover, $|a_j|\ge\delta$ implies $E\ge\delta^2$, whereas
\eqref{eq:profile-energy-balance} yields
\[
  E
  =
  \sum_r\norm{G_r}_2^2
  \le
  \left(\sum_r\norm{G_r}_2\right)^2
  \le
  P^2.
\]
Hence $P\ge\delta$, and \eqref{eq:budgeted-total-mass} gives
\begin{equation}
  \beta+s\le1-\delta<\delta.
  \label{eq:budgeted-mass-gap}
\end{equation}
Choose
\[
  \beta+s<\rho<\delta.
\]

Construct the limiting core $H$, the subgroups $K_j$ and $K$, and the embeddings
$\phi_j:G\to K_j$ exactly as in Subsection~\ref{subsec:fourier-algebra-positive}.
The proof of Lemma~\ref{lem:H-lower-bound} uses only
$\norm{w_j}_2\to0$ and $|a_j|\ge\delta$, and therefore gives
\[
  |H|\ge\delta
  \qquad\text{on }K.
\]
Thus
\[
  (H-\rho z)^{-1}\in A_+(K\times\T).
\]
Applying Lemma~\ref{lem:analytic-pullback} to $F=H-\rho z$ gives, for all
sufficiently large $j$,
\[
  V_j:=(h_j-\rho z)^{-1}\in A_+(G\times\T)
\]
and
\begin{equation}
  \sup_j\norm{V_j}_{A(G\times\T)}<\infty.
  \label{eq:budgeted-core-resolvent}
\end{equation}
By Lemma~\ref{lem:inverse-power-identity},
\begin{equation}
  \norm{V_j}_{A(G\times\T)}
  =
  \sum_{k=0}^{\infty}
  \rho^k\norm{h_j^{-k-1}}_{A(G)}.
  \label{eq:budgeted-core-power-series}
\end{equation}

Set
\[
  q_j(x,z):=w_j(x)-s_jz.
\]
The Fourier support of $w_j$ lies in degree $0$ of the $z$-variable, whereas
$s_jz$ lies in degree $1$. Hence these supports are disjoint and
\[
  \norm{q_j}_{A(G\times\T)}
  =
  \norm{w_j}_{A(G)}+s_j
  \longrightarrow\beta+s<\rho.
\]
For large $j$, therefore, $\norm{q_j}<\rho$. Consider
\[
  S_j
  :=
  \sum_{k=0}^{\infty}
  (-1)^kh_j^{-k-1}q_j^k.
\]
Using \eqref{eq:budgeted-core-power-series},
\[
  \sum_{k=0}^{\infty}
  \norm{h_j^{-k-1}q_j^k}
  \le
  \sum_{k=0}^{\infty}
  \norm{h_j^{-k-1}}\norm{q_j}^k
  \le
  \norm{V_j},
\]
so the series converges absolutely and its norms are uniformly bounded. For the
$N$th partial sum,
\[
  (h_j+q_j)S_{j,N}
  =
  1+(-1)^Nh_j^{-N-1}q_j^{N+1}.
\]
Since the series is absolutely convergent,
$\norm{h_j^{-N-1}q_j^N}\to0$, and thus
\[
  \norm{h_j^{-N-1}q_j^{N+1}}
  \le
  \norm{q_j}\,
  \norm{h_j^{-N-1}q_j^N}
  \longrightarrow0.
\]
Because $h_j+q_j=a_j-s_jz$, we obtain
\[
  S_j=(a_j-s_jz)^{-1}.
\]
Together with \eqref{eq:budgeted-core-resolvent}, this contradicts the choice of the
bad sequence and proves \eqref{eq:budgeted-resolvent-bound}.

Finally, for a single pair $(a,s)$ satisfying \eqref{eq:budgeted-assumptions},
\[
  s\le1-\delta<\delta\le|a(x)|
  \qquad(x\in G),
\]
so for every fixed $x$
\[
  \frac1{a(x)-sz}
  =
  \sum_{n=0}^{\infty}s^nz^na(x)^{-n-1}
  \qquad(|z|\le1).
\]
The left-hand side belongs to $A(G\times\T)$ by the first part of the proposition.
Uniqueness of Fourier coefficients in the $z$-variable therefore yields
\[
  \widehat{(a-sz)^{-1}}(\gamma,n)
  =
  s^n\widehat{a^{-n-1}}(\gamma),
  \qquad n\ge0,
\]
with no negative $z$-frequencies. Summing the absolute values proves
\eqref{eq:budgeted-power-identity}.
\end{proof}

We shall also use the following consequence of the theorem of Glicksberg and Wik
\cite{GlicksbergWik1972}, in precisely the form recorded in
\cite[Theorem~2.2 and Corollary~2.3]{OhryskoWasilewski2020}.

\begin{proposition}[Glicksberg--Wik]
\label{prop:glicksberg-wik}
Let $H$ be a locally compact Abelian group and let
$\mu=\mu_d+\mu_c\in M(H)$ be the decomposition of $\mu$ into its discrete and
continuous parts. Then
\begin{equation}
  \inf_{\gamma\in\widehat H}
  \abs{\widehat{\mu_d}(\gamma)}
  \ge
  \inf_{\gamma\in\widehat H}
  \abs{\widehat\mu(\gamma)}.
  \label{eq:glicksberg-wik-lower-bound}
\end{equation}
\end{proposition}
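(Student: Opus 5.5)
The plan is to prove \eqref{eq:glicksberg-wik-lower-bound} pointwise. Fix $\gamma_0\in\widehat H$ and $\varepsilon>0$, and put $\delta_0:=\inf_{\widehat H}\abs{\widehat\mu}$. I will find some $\gamma\in\widehat H$ such that at the point $\gamma_0+\gamma$ two things hold simultaneously: $\widehat{\mu_d}$ is $\varepsilon$-close to its value at $\gamma_0$, and $\abs{\widehat{\mu_c}}<\varepsilon$. Granting this,
\[
  \abs{\widehat{\mu_d}(\gamma_0)}
  \ge\abs{\widehat{\mu_d}(\gamma_0+\gamma)}-\varepsilon
  \ge\abs{\widehat\mu(\gamma_0+\gamma)}-\abs{\widehat{\mu_c}(\gamma_0+\gamma)}-\varepsilon
  \ge\delta_0-2\varepsilon .
\]
Letting $\varepsilon\to0$ and taking the infimum over $\gamma_0$ gives the claim.

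The two ingredients are almost periodicity of the discrete part and mean-square smallness of the continuous part, and both are compared through one invariant mean.

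\emph{Discrete part.} Write $\mu_d=\sum_n a_n\delta_{x_n}$ with $\sum_n\abs{a_n}<\infty$. Then $\widehat{\mu_d}(\gamma)=\sum_n a_n\overline{\gamma(x_n)}$ is a uniform limit of trigonometric polynomials in $\gamma$. Hence $\widehat{\mu_d}$ is Bohr almost periodic and extends continuously to the Bohr compactification $b\widehat H$.

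\emph{Mean of the discrete part.} Let $\mathfrak m$ be the unique invariant mean on almost periodic functions on $\widehat H$, namely integration against Haar measure $m_b$ on $b\widehat H$. Consider
\[
  S_\varepsilon:=\{\gamma:\abs{\widehat{\mu_d}(\gamma_0+\gamma)-\widehat{\mu_d}(\gamma_0)}<\varepsilon\}.
\]
It is the trace on $\widehat H$ of an open neighbourhood $W$ of $0$ in $b\widehat H$. By Urysohn, choose a continuous $0\le\phi\le\one_W$ on $b\widehat H$ with $\phi(0)=1$. Then $c:=\int\phi\,dm_b>0$, because Haar measure has full support. The restriction of $\phi$ to $\widehat H$ is almost periodic, satisfies $\phi\le\one_{S_\varepsilon}$, and has $\mathfrak m(\phi)=c$.

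\emph{Continuous part.} Here I use the LCA version of Wiener's mean-square formula, the same one invoked in the introduction via Nikolski:
\[
  \mathfrak m_\gamma\bigl(\abs{\widehat\nu(\gamma_0+\gamma)}^2\bigr)=\sum_{x}\abs{\nu(\{x\})}^2,
\]
where $\mathfrak m$ is now any invariant mean on bounded uniformly continuous functions on $\widehat H$ extending the Bohr mean. This follows by writing $\abs{\widehat\nu}^2=\widehat{\nu*\widetilde\nu}$ and noting that the mean of a Fourier--Stieltjes transform equals the mass of the measure at $0$. For $\nu=\mu_c$ the right-hand side is $0$, so $\mathfrak m(\abs{\widehat{\mu_c}(\gamma_0+\cdot)}^2)=0$.

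\emph{Conclusion by contradiction.} Suppose $\abs{\widehat{\mu_c}(\gamma_0+\gamma)}\ge\varepsilon$ for every $\gamma\in S_\varepsilon$. Then $\abs{\widehat{\mu_c}(\gamma_0+\cdot)}^2\ge\varepsilon^2\phi$ pointwise on $\widehat H$. Positivity of the mean gives $0\ge\varepsilon^2c>0$, which is absurd. Hence the required $\gamma$ exists.

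The main obstacle is making sure a single mean handles both functions at once. $\phi$ is almost periodic, but $\abs{\widehat{\mu_c}}^2$ is only uniformly continuous and bounded. I would fix an invariant mean on the space of bounded uniformly continuous functions on $\widehat H$; its restriction to almost periodic functions is automatically the Haar integral on $b\widehat H$, by uniqueness of the invariant mean there. The remaining point is checking that Wiener's identity holds for every invariant mean, not just a particular averaging net. I would derive this directly from $\mathfrak m(\widehat\eta)=\eta(\{0\})$ for $\eta\in M(H)$. To prove that identity, split $\eta=\eta(\{0\})\delta_0+\eta'$ and show $\mathfrak m(\widehat{\eta'})=0$: by invariance, $\mathfrak m(\widehat{\eta'})=\mathfrak m\bigl(\widehat{\eta'}(\cdot+\gamma)\bigr)=\mathfrak m\bigl(\widehat{\gamma^{-1}\eta'}\bigr)$ for every $\gamma$. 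Averaging over $\gamma$ then reduces the claim to Fubini together with $\eta'(\{0\})=0$.
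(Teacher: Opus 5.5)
The paper gives no proof of this proposition. It imports it from Glicksberg--Wik in the form of \cite[Theorem~2.2 and Corollary~2.3]{OhryskoWasilewski2020}. Your proposal supplies a self-contained argument, and its structure is sound. $\widehat{\mu_d}$ is Bohr almost periodic, so $S_\varepsilon$ is the trace of a Bohr neighbourhood and receives positive Bohr mean through $\phi$. Meanwhile $\mathfrak m\bigl(\abs{\widehat{\mu_c}(\gamma_0+\cdot)}^2\bigr)=(\mu_c*\widetilde{\mu_c})(\{0\})=0$, which forces $\abs{\widehat{\mu_c}}<\varepsilon$ somewhere on $\gamma_0+S_\varepsilon$. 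Your argument actually yields $\abs{\widehat\mu(\gamma_0+\gamma)-\widehat{\mu_d}(\gamma_0)}<2\varepsilon$. That is the range inclusion $\widehat{\mu_d}(\widehat H)\subset\overline{\widehat\mu(\widehat H)}$, i.e.\ the Glicksberg--Wik theorem itself rather than only the infimum corollary. The citation buys brevity. Your route shows the mechanism: a Fourier--Stieltjes transform splits into an almost periodic part and a part of mean zero, controlled by the same Wiener mean-square identity the paper uses in the introduction.

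The one step that is not yet a proof is $\mathfrak m(\widehat\eta)=\eta(\{0\})$ for an arbitrary invariant mean. A mean is only finitely additive, so it cannot be moved inside $\int\,d\eta'$ by Fubini. Instead, average your translation identity against a genuine compactly supported probability measure $\rho$ on $\widehat H$.
\begin{itemize}
\item The map $\gamma\mapsto\widehat{\overline\gamma\eta'}$ is norm continuous into bounded continuous functions, by regularity of $\abs{\eta'}$ and uniform convergence of characters on compacta.
\item Hence $\mathfrak m$ commutes with $\int\,d\rho(\gamma)$, giving $\mathfrak m(\widehat{\eta'})=\mathfrak m(\widehat{\tau_\rho\eta'})$ with $\tau_\rho(x)=\int\overline{\gamma(x)}\,d\rho(\gamma)$.
\item Therefore $\abs{\mathfrak m(\widehat{\eta'})}\le\int\abs{\tau_\rho}\,d\abs{\eta'}$.
\item Take $\rho$ to be normalized Haar measure on a F{\o}lner set. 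Then $\tau_\rho$ is uniformly small on any compact subset of $H\setminus\{0\}$, and regularity of $\abs{\eta'}$ together with $\abs{\eta'}(\{0\})=0$ finishes the argument.
\end{itemize}
Alternatively, cite Eberlein: Fourier--Stieltjes transforms are weakly almost periodic, and that space has a unique invariant mean.

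With F{\o}lner averages available, you can also drop invariant means on uniformly continuous functions entirely. Run your contradiction directly with the averages $m(F_\alpha)^{-1}\int_{F_\alpha}$. These converge to the Bohr mean on almost periodic functions and to $\eta(\{0\})$ on $\widehat\eta$, and positivity passes to the limit. The ``main obstacle'' you identified then disappears.
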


\begin{theorem}[Uniform norm-controlled inversion in measure algebras]
\label{thm:uniform-measure-inversion}
For every $\delta>1/2$ there exists a constant $C_M(\delta)<\infty$ with the
following property. For every locally compact Abelian group $H$ and every
$\mu\in M(H)$ satisfying
\begin{equation}
  \norm{\mu}_{M(H)}\le1,
  \qquad
  \inf_{\gamma\in\widehat H}\abs{\widehat\mu(\gamma)}\ge\delta,
  \label{eq:measure-inversion-assumptions}
\end{equation}
the measure $\mu$ is invertible in $M(H)$ and
\begin{equation}
  \norm{\mu^{-1}}_{M(H)}\le C_M(\delta).
  \label{eq:measure-inversion-conclusion}
\end{equation}
The constant $C_M(\delta)$ is independent of $H$.
\end{theorem}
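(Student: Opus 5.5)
Split $\mu=\mu_d+\mu_c$ into discrete and continuous parts, with $\norm{\mu_d}+\norm{\mu_c}=\norm\mu\le1$. Proposition~\ref{prop:glicksberg-wik} gives $\inf_{\widehat H}\abs{\widehat{\mu_d}}\ge\delta$. We then pass to the Bohr compactification. The discrete measures form the algebra $M_d(H)=\ell^1(H_d)$, and $\ell^1(H_d)$ is isometrically isomorphic to $A(bK)$, where $bK:=\widehat{H_d}$ is the Bohr compactification of $\widehat H$. Under this identification $\mu_d$ becomes a function $a\in A(bK)$ with $\norm a_{A(bK)}=\norm{\mu_d}$. Since $\widehat H$ is dense in $bK$ and $a$ is continuous, $\inf_{bK}\abs a=\inf_{\widehat H}\abs{\widehat{\mu_d}}\ge\delta$.

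Set $s:=\norm{\mu_c}$, so $\norm a+s\le1$. Proposition~\ref{prop:budgeted-resolvent}, in particular \eqref{eq:budgeted-power-identity}, then yields
\[
  \sum_{n\ge0}s^n\norm{\mu_d^{-n-1}}_{M(H)}\le R_A(\delta).
\]
Here inverses in $A(bK)$ correspond to inverses in $M_d(H)\subset M(H)$, and the norms agree isometrically.

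Next we write $\mu=\mu_d*(\delta_0+\mu_d^{-1}*\mu_c)$, or more directly define
\[
  \nu:=\sum_{k\ge0}(-1)^k\mu_d^{-k-1}*\mu_c^{*k}.
\]
Its norm is bounded by $\sum_k\norm{\mu_d^{-k-1}}\,s^k\le R_A(\delta)$, so the series converges absolutely in $M(H)$. Multiplying by $\mu=\mu_d+\mu_c$ telescopes: the $N$th partial sum $\nu_N$ satisfies
\[
  \mu*\nu_N=\delta_0+(-1)^N\mu_d^{-N-1}*\mu_c^{*(N+1)}.
\]
The error term has norm at most $s\cdot s^N\norm{\mu_d^{-N-1}}$, which tends to zero because it is $s$ times a term of a convergent series. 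This works even when $s=1-\delta$ exactly, since no strict gap between $s$ and the resolvent radius is needed here. Hence $\mu*\nu=\delta_0$, so $\mu$ is invertible with $\norm{\mu^{-1}}\le R_A(\delta)=:C_M(\delta)$. This constant is independent of $H$.

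The only delicate step is the transfer to $bK$. It requires that the lower bound on $\widehat H$ passes to all of $bK$ by density and continuity, and that the isometric identification $M_d(H)\cong A(bK)$ respects inverses and multiplication, so that the budgeted resolvent bound, stated for compact groups, applies verbatim. All real difficulty is absorbed by Proposition~\ref{prop:budgeted-resolvent}. The point of using it instead of Theorem~\ref{thm:uniform-fourier-inversion} is precisely that it controls $\norm{\mu_d^{-n-1}}$ weighted by $s^n$ for all $n$ simultaneously, rather than only $\norm{\mu_d^{-1}}$.
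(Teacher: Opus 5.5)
Your proposal is correct and follows the paper's proof essentially step for step. Glicksberg--Wik handles the discrete part, and you transfer it to $A(\widehat{H_d})$ on the Bohr compactification, extending the lower bound by density. The budgeted resolvent estimate with $s=\|\mu_c\|$ then controls the absolutely convergent series $\sum_k(-1)^k\mu_d^{-k-1}*\mu_c^{*k}$, whose telescoping error is $s$ times a term of a convergent series, giving $C_M(\delta)=R_A(\delta)$.
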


\begin{proof}
Write
\begin{equation}
  \mu=\mu_d+\mu_c,
  \qquad
  \norm{\mu}_{M(H)}
  =\norm{\mu_d}_{M(H)}+\norm{\mu_c}_{M(H)},
  \label{eq:measure-decomposition}
\end{equation}
where the norm identity follows from the mutual singularity of the atomic and
non-atomic parts. By Proposition~\ref{prop:glicksberg-wik},
\begin{equation}
  \inf_{\gamma\in\widehat H}
  \abs{\widehat{\mu_d}(\gamma)}\ge\delta.
  \label{eq:discrete-part-lower-bound}
\end{equation}

Let $H_d$ denote the underlying group of $H$ equipped with the discrete topology.
The compact group
\[
  K:=\widehat{H_d}
\]
is canonically the Bohr compactification $b\widehat H$ of the dual group
$\widehat H$: the canonical homomorphism
\[
  \iota:\widehat H\longrightarrow K
\]
has dense range. We use this standard realization of the Bohr compactification; compare
\cite[Subsection~3.1.4]{Nikolski1999}. Fourier transformation gives canonical isometric
algebra identifications
\begin{equation}
  M_d(H)\cong\ell^1(H_d)\cong A(K).
  \label{eq:discrete-measures-fourier-algebra}
\end{equation}
Let $a\in A(K)$ correspond to $\mu_d$. Then
\begin{equation}
  \norm{a}_{A(K)}=\norm{\mu_d}_{M(H)},
  \label{eq:a-mud-norm}
\end{equation}
and, by construction of the embedding into the Bohr compactification,
\[
  a(\iota(\gamma))=\widehat{\mu_d}(\gamma)
  \qquad(\gamma\in\widehat H).
\]
Since $\iota(\widehat H)$ is dense in $K$, the lower bound
\eqref{eq:discrete-part-lower-bound} extends by continuity to all of $K$:
\begin{equation}
  \abs{a(x)}\ge\delta
  \qquad(x\in K).
  \label{eq:a-lower-bound-K}
\end{equation}

Set
\[
  s:=\norm{\mu_c}_{M(H)}.
\]
By \eqref{eq:measure-decomposition} and \eqref{eq:a-mud-norm},
\[
  \norm{a}_{A(K)}+s=\norm{\mu}_{M(H)}\le1.
\]
Proposition~\ref{prop:budgeted-resolvent} therefore gives
\begin{equation}
  \sum_{n=0}^{\infty}
  s^n\norm{a^{-n-1}}_{A(K)}
  \le R_A(\delta).
  \label{eq:weighted-inverse-powers-a}
\end{equation}
In particular, $a$ is invertible in $A(K)$; through
\eqref{eq:discrete-measures-fourier-algebra}, $\mu_d$ is invertible in $M_d(H)$.
Write
\[
  \nu_d:=\mu_d^{-1}.
\]
The isometry in \eqref{eq:discrete-measures-fourier-algebra} turns
\eqref{eq:weighted-inverse-powers-a} into
\begin{equation}
  \sum_{n=0}^{\infty}
  s^n\norm{\nu_d^{*(n+1)}}_{M(H)}
  \le R_A(\delta).
  \label{eq:weighted-inverse-powers-measure}
\end{equation}

Consider now
\begin{equation}
  \nu
  :=\sum_{n=0}^{\infty}
  (-1)^n\nu_d^{*(n+1)}*\mu_c^{*n}.
  \label{eq:measure-inverse-series}
\end{equation}
The series converges absolutely in $M(H)$, since
\begin{align*}
  \sum_{n=0}^{\infty}
  \norm{\nu_d^{*(n+1)}*\mu_c^{*n}}_{M(H)}
  &\le
  \sum_{n=0}^{\infty}
  s^n\norm{\nu_d^{*(n+1)}}_{M(H)}\\
  &\le R_A(\delta).
\end{align*}
Consequently,
\begin{equation}
  \norm{\nu}_{M(H)}\le R_A(\delta).
  \label{eq:measure-series-bound}
\end{equation}

For the $N$th partial sum $\nu_N$ of \eqref{eq:measure-inverse-series}, commutativity
and $\mu_d*\nu_d=\delta_0$ give the exact telescoping identity
\begin{equation}
  \bigl(\mu_d+\mu_c\bigr)*\nu_N
  =\delta_0
  +(-1)^N\nu_d^{*(N+1)}*\mu_c^{*(N+1)}.
  \label{eq:measure-telescoping}
\end{equation}
The norm of the last term is bounded by
\[
  s^{N+1}\norm{\nu_d^{*(N+1)}}_{M(H)}
  =s\Bigl(s^N\norm{\nu_d^{*(N+1)}}_{M(H)}\Bigr)
  \longrightarrow0,
\]
because the expression in parentheses is a term of the convergent series
\eqref{eq:weighted-inverse-powers-measure}; the case $s=0$ is immediate. Letting
$N\to\infty$ in \eqref{eq:measure-telescoping} yields
\[
  \mu*\nu=\delta_0.
\]
Thus $\nu=\mu^{-1}$, and \eqref{eq:measure-series-bound} proves
\eqref{eq:measure-inversion-conclusion} with
\[
  C_M(\delta):=R_A(\delta).
\]
\end{proof}

\begin{remark}
The budgeted form of Proposition~\ref{prop:budgeted-resolvent} is essential here. A
plain estimate for $\norm{\mu_d^{-1}}$ would lead to the usual Neumann condition
$\norm{\mu_d^{-1}}\norm{\mu_c}<1$, and such an inequality does not follow from
\eqref{eq:measure-inversion-assumptions}. What is needed instead is precisely the
weighted control of all inverse powers in
\eqref{eq:weighted-inverse-powers-measure}; the weight
$s=\norm{\mu_c}$ is exactly the one generated by the powers of the continuous part
in \eqref{eq:measure-inverse-series}.
\end{remark}

\subsection{Toeplitz operators}
\label{subsec:toeplitz-positive}

The final subsection of Section~\ref{sec:positive} treats the corresponding
norm-controlled inversion problem for Toeplitz operators on the Wiener algebra.
We now adopt the notation standard in the theory of Toeplitz operators. Put
\[
  W:=A(\T)
  =\left\{f(z)=\sum_{n\in\Z}\fhat(n)z^n:
  \sum_{n\in\Z}\abs{\fhat(n)}<\infty\right\}
\]
and
\[
  W^+:=\left\{g\in W:\widehat g(n)=0\text{ for }n<0\right\}.
\]
Let $P_+:W\to W^+$ be the Riesz projection and put $P_-:=I-P_+$. For
$f\in W$, let
\[
  T_f:W^+\longrightarrow W^+,
  \qquad
  T_fg=P_+(fg),
\]
be the Toeplitz operator with symbol $f$. Its matrix in the standard basis
$(1,z,z^2,\ldots)$ is $(\fhat(i-j))_{i,j\ge0}$. Before recalling the inversion criterion, let us fix the convention for the winding
number. If $f\in C(\T)$ does not vanish on $\T$, choose a continuous lift
$\vartheta:[0,2\pi]\to\R$ such that
\[
  \frac{f(e^{it})}{\abs{f(e^{it})}}=e^{i\vartheta(t)}
  \qquad (0\le t\le2\pi).
\]
Then
\begin{equation}
  \Ind(f)
  :=\frac{\vartheta(2\pi)-\vartheta(0)}{2\pi}\in\Z.
  \label{eq:winding-number-definition}
\end{equation}
Equivalently, $\Ind(f)$ is the topological degree of the map
$f/\abs f:\T\to\T$. With this convention, Krein's theorem, as recalled in
\cite[p.~396]{Nikolskaia2001}, gives
\begin{equation}
  T_f\text{ is invertible}
  \quad\Longleftrightarrow\quad
  \inf_{z\in\T}\abs{f(z)}>0
  \ \text{and}\ 
  \Ind(f)=0.
  \label{eq:krein-criterion}
\end{equation}

\begin{lemma}[Norm of a Toeplitz operator]
\label{lem:toeplitz-symbol-norm}
For every $f\in W$,
\[
  \norm{T_f}_{\mathcal L(W^+)}=\norm f_W.
\]
\end{lemma}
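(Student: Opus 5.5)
We must show $\norm{T_f}_{\mathcal L(W^+)}=\norm f_W$, where $W^+$ carries the $\ell^1$ norm of Fourier coefficients. The plan is to prove the upper bound by contractivity and the lower bound by testing $T_f$ on high powers of $z$.

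\emph{Upper bound.} For $g\in W^+$ the Riesz projection $P_+$ only deletes Fourier coefficients, so it is contractive on $W$. Since $W$ is a Banach algebra,
\[
  \norm{T_fg}_W=\norm{P_+(fg)}_W\le\norm{fg}_W\le\norm f_W\norm g_W .
\]
Hence $\norm{T_f}\le\norm f_W$.

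\emph{Lower bound.} Test $T_f$ on the unit vectors $g_N:=z^N$ with $N\ge0$, so that $\norm{g_N}_W=1$. Then $fz^N=\sum_{n}\fhat(n)z^{n+N}$, and $P_+$ keeps exactly the terms with $n\ge -N$. Therefore
\[
  \norm{T_fz^N}_W=\sum_{n\ge -N}\abs{\fhat(n)} .
\]
This increases to $\sum_{n\in\Z}\abs{\fhat(n)}=\norm f_W$ as $N\to\infty$. Consequently $\norm{T_f}\ge\sup_N\norm{T_fz^N}_W=\norm f_W$.

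The only point needing any care is the exact identification of $\norm{T_fz^N}$ as a tail sum of $\abs{\fhat}$, which follows directly from the matrix description $(\fhat(i-j))_{i,j\ge0}$: column $N$ is $(\fhat(i-N))_{i\ge0}$, so the operator norm on $\ell^1$ is the supremum of the column $\ell^1$ norms, namely $\sup_N\sum_{i\ge0}\abs{\fhat(i-N)}=\norm f_W$. This column-norm formula gives both inequalities at once. I expect no real obstacle; both directions are routine.
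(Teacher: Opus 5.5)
Your proposal is correct and follows essentially the same route as the paper: the upper bound from contractivity of $P_+$ and the Banach-algebra inequality, and the lower bound by testing on $z^N$, where $\norm{T_fz^N}_W=\sum_{n\ge -N}\abs{\fhat(n)}\to\norm f_W$. Your remark that the column-norm formula for operators on $\ell^1$ yields both inequalities at once is a valid repackaging of the same computation.
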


\begin{proof}
Since $P_+$ is contractive and $W$ is a Banach algebra,
$\norm{T_f}\le\norm f_W$. Conversely, for $N\ge0$,
\[
  \norm{T_fz^N}_{W^+}
  =\sum_{k\ge-N}\abs{\fhat(k)}
  \longrightarrow\sum_{k\in\Z}\abs{\fhat(k)}=\norm f_W,
\]
while $\norm{z^N}_{W^+}=1$.
\end{proof}

The estimate obtained in Subsection~\ref{subsec:largest-coefficient} immediately improves the explicit range in
\cite[Theorem~3.1]{Nikolskaia2001}. Recall that Nikolskaia obtained the bound
$(2\delta^2-1)^{-1}$ for general symbols when $\delta>1/\sqrt2$.

\begin{proposition}[Explicit estimate from the largest Fourier coefficient]
\label{prop:toeplitz-explicit-range}
Let
\[
  \delta>2-\sqrt2
\]
and let $f\in W$ satisfy
\[
  \norm f_W\le1,
  \qquad
  \inf_{z\in\T}\abs{f(z)}\ge\delta,
  \qquad
  \Ind(f)=0.
\]
Then
\begin{equation}
  \norm{T_f^{-1}}_{\mathcal L(W^+)}
  \le
  \frac{1}{\delta+\sqrt{2\delta-1}-1}.
  \label{eq:toeplitz-explicit-bound}
\end{equation}
\end{proposition}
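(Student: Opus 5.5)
The plan is to reduce to the largest-coefficient estimate of Theorem~\ref{thm:improved-largest-coefficient}, use the winding-number hypothesis to locate that coefficient at frequency $0$, and then invert $T_f$ by a Neumann series at the operator level.

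\emph{Step 1: a dominant coefficient.} Since $\T$ is a compact connected Abelian group and $\delta\le\inf_\T\abs f\le\norm f_W\le1$, Theorem~\ref{thm:improved-largest-coefficient} applies. It gives $m\in\Z$ with
\[
  a:=\abs{\fhat(m)}=\norm{\fhat}_\infty\ge\Phi(\delta)>\tfrac12,
\]
where the last inequality is Remark~\ref{rem:comparison-largest}, using $\delta>2-\sqrt2$. Multiplying $f$ by a unimodular constant changes neither the hypotheses nor $\norm{T_f^{-1}}$, since $T_{cf}=cT_f$. So we may assume $\fhat(m)=a>0$.

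\emph{Step 2: the winding number forces $m=0$.} Write $f=z^m(a+g)$, where $\widehat g(0)=0$ and $\norm g_W=\norm f_W-a\le1-a<a$. Then $\abs{g}\le\norm g_W<a$ pointwise on $\T$. Hence the homotopy $t\mapsto a+tg$, $t\in[0,1]$, never vanishes, and $\Ind(a+g)=\Ind(a)=0$. Additivity of the winding number under multiplication gives $\Ind(f)=m+\Ind(a+g)=m$. The hypothesis $\Ind(f)=0$ therefore yields $m=0$. This is the one point where the Toeplitz structure really enters, because multiplication by $z^m$ is not an isometric symmetry of $T_f$ on $W^+$ in the way modulation is in $A(G)$.

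\emph{Step 3: Neumann series.} With $m=0$ we have $f=a+g$, and since $T$ is linear in the symbol, $T_f=aI+T_g$. Lemma~\ref{lem:toeplitz-symbol-norm} gives $\norm{T_g}_{\mathcal L(W^+)}=\norm g_W\le1-a<a$. Hence $T_f=a\bigl(I+a^{-1}T_g\bigr)$ is invertible, and
\[
  \norm{T_f^{-1}}\le\frac{1}{a-(1-a)}=\frac1{2a-1}\le\frac1{2\Phi(\delta)-1}=\frac1{\delta+\sqrt{2\delta-1}-1}.
\]
This is \eqref{eq:toeplitz-explicit-bound}. It is the operator analogue of Lemma~\ref{lem:nikolski-splitting}, and it is consistent with Krein's criterion \eqref{eq:krein-criterion}.

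The main obstacle is Step 2: ruling out a dominant coefficient at a nonzero frequency. The homotopy argument handles it cleanly because $\norm g_W<a$ already gives a pointwise bound; everything else is routine.
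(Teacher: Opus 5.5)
Your proof is correct and follows the paper's argument essentially step for step: the dominant coefficient comes from Theorem~\ref{thm:improved-largest-coefficient}, a homotopy argument shows $\Ind(f)$ equals that coefficient's frequency, and the Neumann series for $T_f=aI+T_g$ uses Lemma~\ref{lem:toeplitz-symbol-norm}. The only cosmetic difference is in the winding-number step: you factor out $z^m$ and use additivity of the index, while the paper homotopes $az^{n_0}+s\psi$ directly.
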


\begin{proof}
The assumptions imply $\delta\le\norm f_\infty\le\norm f_W\le1$. Hence
Theorem~\ref{thm:improved-largest-coefficient} applies, and there is $n_0\in\Z$ such that
\[
  a:=\abs{\fhat(n_0)}
  =\norm{\fhat}_{\infty}
  \ge\Phi(\delta)>\frac12.
\]
After multiplication by a unimodular constant, write
\[
  f(z)=az^{n_0}+\psi(z),
  \qquad
  \norm\psi_W\le1-a<a.
\]
The homotopy $az^{n_0}+s\psi$, $0\le s\le1$, does not meet the origin, so
homotopy invariance of the winding number gives
\[
  \Ind(f)=n_0.
\]
Hence $n_0=0$. Therefore
\[
  T_f=aI+T_\psi,
  \qquad
  \norm{T_\psi}=\norm\psi_W\le1-a<a,
\]
and the Neumann series yields
\[
  \norm{T_f^{-1}}
  \le\frac{1}{a-\norm\psi_W}
  \le\frac{1}{2a-1}
  \le\frac{1}{2\Phi(\delta)-1},
\]
which is exactly \eqref{eq:toeplitz-explicit-bound}.
\end{proof}

\subsubsection{Why the Fourier-algebra argument does not apply directly}

Theorem~\ref{thm:uniform-fourier-inversion} cannot simply be applied to obtain a
Toeplitz estimate for every $\delta>1/2$. It gives a uniform bound for
$\norm{f^{-1}}_W$, whereas the object to be controlled here is
$\norm{T_f^{-1}}$. In general,
\[
  T_f^{-1}\ne T_{f^{-1}},
\]
and the inverse of a Toeplitz operator need not itself be a Toeplitz operator. The
underlying obstruction is the failure of multiplicativity of the Toeplitz map:
$T_aT_b-T_{ab}$ is generally non-zero when both $a$ and $b$ have positive and
negative Fourier frequencies. Thus norm-controlled inversion in the Banach algebra
$W$ controls the inverse of the symbol, but not directly the inverse of the associated
Toeplitz operator.

What does survive from Subsection~\ref{subsec:fourier-algebra-positive} is the compactness mechanism behind its proof.
The same profile decomposition, limiting subgroup, mass gap and one-sided lifting
argument can be used to construct uniformly controlled logarithms of the symbols. A
controlled logarithm is exactly what is needed for a controlled Wiener--Hopf
factorization.

Put
\[
  W^-:=\left\{g\in W:\widehat g(n)=0\text{ for }n>0\right\}.
\]
Thus $W^-$ contains the constant coefficient, whereas the range of $P_-=I-P_+$
consists of the elements of $W^-$ with zero constant term.
We shall use the following elementary product rules.

\begin{lemma}[Toeplitz product rules]
\label{lem:toeplitz-product-rules}
For $a,b\in W$ one has
\[
  T_aT_b=T_{ab}
\]
whenever either $b\in W^+$ or $a\in W^-$. Consequently, if $f=e^\ell$ with
$\ell\in W$, then $T_f$ is invertible and
\begin{equation}
  \norm{T_f^{-1}}
  \le e^{\norm\ell_W}.
  \label{eq:toeplitz-controlled-log}
\end{equation}
\end{lemma}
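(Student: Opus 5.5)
The product rules reduce to a computation with the Riesz projection. I will first show $T_aT_b=T_{ab}$ when $b\in W^+$ or $a\in W^-$, and then obtain the invertibility of $T_{e^\ell}$ by splitting $\ell$ into analytic and antianalytic parts.

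For the first rule, take $g\in W^+$ and write $T_aT_bg=P_+(aP_+(bg))$. If $b\in W^+$, then $bg\in W^+$, because $W^+$ is a subalgebra (nonnegative frequencies add). Hence $P_+(bg)=bg$, and $T_aT_bg=P_+(abg)=T_{ab}g$.

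If instead $a\in W^-$, write $bg=P_+(bg)+P_-(bg)$. Then
\[
  T_{ab}g=P_+(aP_+(bg))+P_+(aP_-(bg)).
\]
The function $P_-(bg)$ has only strictly negative frequencies, and $a$ has only nonpositive frequencies. Their product therefore has only strictly negative frequencies, so $P_+(aP_-(bg))=0$. This gives $T_{ab}g=T_aT_bg$.

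For the consequence, let $f=e^\ell$ with $\ell\in W$, where the exponential is computed in the Banach algebra $W$. Split
\[
  \ell=\ell_++\ell_-,\qquad \ell_+:=P_+\ell\in W^+,\qquad \ell_-:=P_-\ell .
\]
The range of $P_-$ lies in $W^-$. The supports of $\ell_+$ and $\ell_-$ are disjoint, so
\[
  \norm{\ell_+}_W+\norm{\ell_-}_W=\norm{\ell}_W.
\]
Since $W^\pm$ are closed subalgebras, the exponentials satisfy $e^{\pm\ell_+}\in W^+$ and $e^{\pm\ell_-}\in W^-$. Moreover $e^\ell=e^{\ell_-}e^{\ell_+}$ by commutativity.

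Define
\[
  S:=T_{e^{-\ell_+}}T_{e^{-\ell_-}}.
\]
I check that $S$ is a two-sided inverse of $T_f$.

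For $T_fS$, use the first rule twice with right factors in $W^+$:
\[
  T_f T_{e^{-\ell_+}}=T_{e^{\ell_-}}.
\]
Then the left factor $e^{\ell_-}$ lies in $W^-$, so
\[
  T_{e^{\ell_-}}T_{e^{-\ell_-}}=T_1=I.
\]

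For $ST_f$, the left factor $e^{-\ell_-}$ lies in $W^-$, so
\[
  T_{e^{-\ell_-}}T_f=T_{e^{\ell_+}}.
\]
Then the right factor $e^{\ell_+}$ lies in $W^+$, so
\[
  T_{e^{-\ell_+}}T_{e^{\ell_+}}=I.
\]

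For the norm bound, Lemma~\ref{lem:toeplitz-symbol-norm} together with submultiplicativity of the exponential series gives $\norm{e^{h}}_W\le e^{\norm h_W}$. Therefore
\[
  \norm{T_f^{-1}}\le \norm{e^{-\ell_+}}_W\,\norm{e^{-\ell_-}}_W\le e^{\norm{\ell_+}_W+\norm{\ell_-}_W}=e^{\norm\ell_W}.
\]

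No step here is a real obstacle. The only points needing care are bookkeeping ones: tracking which factor must lie in $W^+$ or $W^-$ in each composition, and using the norm additivity of the splitting $P_+\oplus P_-$ in $W$.
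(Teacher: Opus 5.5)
Your proposal is correct and follows the paper's proof essentially step for step. It uses the same Riesz-projection argument for the product rules and the same splitting $\ell=\ell_-+\ell_+$, with inverse $T_{e^{-\ell_+}}T_{e^{-\ell_-}}$ and the norm bound from Lemma~\ref{lem:toeplitz-symbol-norm}; your explicit check that this operator is a two-sided inverse fills in a step the paper simply asserts.
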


\begin{proof}
If $b\in W^+$, then $bg\in W^+$ for every $g\in W^+$, and hence
\[
  T_aT_bg=P_+(abg)=T_{ab}g.
\]
If $a\in W^-$, then $P_+\bigl(aP_-(bg)\bigr)=0$, so the same identity follows.

Now write
\[
  \ell=\ell_-+\ell_+,
\]
where $\ell_-$ has only strictly negative Fourier frequencies and $\ell_+$ has only
non-negative ones. Then $e^{\ell_-}\in W^-$ and $e^{\ell_+}\in W^+$, whence
\[
  T_f=T_{e^{\ell_-}}T_{e^{\ell_+}}
\]
and
\[
  T_f^{-1}=T_{e^{-\ell_+}}T_{e^{-\ell_-}}.
\]
By Lemma~\ref{lem:toeplitz-symbol-norm},
\[
  \norm{T_f^{-1}}
  \le\norm{e^{-\ell_+}}_W\norm{e^{-\ell_-}}_W
  \le e^{\norm{\ell_+}_W+\norm{\ell_-}_W}
  =e^{\norm\ell_W}.
\]
\end{proof}

\begin{factorizationlemma}[Bohr--van Kampen--Arens--Royden factorization]
Let $Y$ be a compact connected Abelian group. If $F\in A(Y)$ is invertible, then
there are a character $\chi\in\widehat Y$ and a function $G\in A(Y)$ such that
\begin{equation}
  F=\chi e^G.
  \label{eq:toeplitz-bvk-arens}
\end{equation}
\end{factorizationlemma}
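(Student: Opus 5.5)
The plan is to follow the classical route through the Gelfand theory of $A(Y)\cong\ell^1(\widehat Y)$ combined with the Arens--Royden identification of $A^{-1}/\exp A$ with the first Čech cohomology of the maximal ideal space, specialised to compact connected groups.

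\emph{Step 1: identify the maximal ideal space.} By the Wiener lemma already used in the paper, $\Delta(A(Y))=Y$, since $A(Y)\cong\ell^1(\widehat Y)$ and the characters of $\ell^1$ of a discrete group are the points of its dual. In particular $F$ is invertible if and only if $F$ has no zeros on $Y$, and $F$ is then a continuous map $Y\to\C\setminus\{0\}$.

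\emph{Step 2: topological factorization.} Because $Y$ is compact and connected, a theorem of van Kampen (the Bohr--van Kampen lemma) asserts that every continuous $u:Y\to\T$ has the form $u=\chi e^{i\theta}$, with $\chi\in\widehat Y$ and $\theta\in C(Y,\R)$. The reason is that $C(Y,\T)/\exp(iC(Y,\R))\cong \check H^1(Y;\Z)$, and for compact connected groups this group is $\widehat Y$. I would quote this rather than reprove it, since its proof (reduction to tori by projective limits, plus the Lie case) is standard. Apply it to $F/|F|$ and set $g:=\log|F|+i\theta\in C(Y)$. Then $F=\chi e^{g}$ holds with $g$ merely continuous.

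\emph{Step 3: upgrade $g$ to $A(Y)$.} This is the main obstacle. The element $F\overline\chi=F\chi^{-1}\in A(Y)$ is invertible in $A(Y)$ and has a continuous logarithm. I would invoke the Arens--Royden theorem: for a commutative unital Banach algebra $\mathcal A$, the Gelfand transform induces an isomorphism $\mathcal A^{-1}/\exp\mathcal A\cong C(\Delta(\mathcal A),\C^\ast)/\exp C(\Delta(\mathcal A))$. Injectivity of this map says precisely that an invertible element whose Gelfand transform has a continuous logarithm on $\Delta(\mathcal A)$ is an exponential in $\mathcal A$.

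If one prefers to avoid the full Arens--Royden machinery, the same conclusion can be obtained by a more elementary approximation argument. First approximate $F\overline\chi$ in $A(Y)$ by an invertible trigonometric polynomial $p$ with $\|p^{-1}(F\overline\chi)-1\|_{A(Y)}<1$, so that the holomorphic functional calculus gives $\log(p^{-1}F\overline\chi)\in A(Y)$. The polynomial $p$ lives on a finite-dimensional torus quotient $Y\to\T^n$. There one uses that the map $A(\T^n)^{-1}/\exp\to H^1(\T^n)=\Z^n$ is injective, which follows from the Arens--Royden theorem on the torus (or from the Bochner--Wiener--Lévy theorem on $\T^n$ together with a homotopy argument). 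The continuous logarithm of $F\overline\chi$ forces the winding numbers of $p$ to be zero up to a character correction, and the correction is absorbed into $\chi$. Either way one obtains $G\in A(Y)$ with $F\overline\chi=e^{G}$, which gives $F=\chi e^{G}$.

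I expect Step 3, that is, passing from a continuous logarithm to an $A(Y)$-logarithm, to be the crux. The cleanest approach is simply to cite Arens--Royden, combined with $\Delta(A(Y))=Y$ and van Kampen's theorem.
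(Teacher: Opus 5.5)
Your proposal is correct and follows the paper's route: the Bohr--van Kampen theorem (which the paper cites in Gorin's formulation) gives $F=\chi e^{g}$ with $g\in C(Y)$, and the Arens--Royden theorem together with $\Delta(A(Y))=Y$ turns the triviality of the class of $F\overline\chi$ in $C(Y)^{-1}/\exp C(Y)$ into triviality in $A(Y)^{-1}/\exp A(Y)$, which yields $G$. Your optional torus-reduction sketch is also sound, and needs only the torus case of the factorization (with connectedness giving a free quotient $\widehat Y\supset\Lambda\cong\mathbb Z^n$); the paper, however, just cites the two classical theorems.
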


\begin{proof}
The classical Bohr--van Kampen theorem says that every non-vanishing continuous
function on a compact connected Abelian group is, up to multiplication by a
character, an exponential of a continuous function; see the formulation and
function-algebra extension in \cite{Gorin1970}. Hence
\[
  F=\chi e^g
  \qquad
  (\chi\in\widehat Y,\ g\in C(Y)).
\]
Thus the class of $F\overline\chi$ is trivial in
$C(Y)^{-1}/\exp C(Y)$. The Arens--Royden theorem identifies, through the Gelfand
transform, the quotient of the invertible group of a commutative Banach algebra by
its exponential component with the corresponding quotient for the algebra of
continuous functions on its maximal ideal space; see \cite[pp.~21--23]{Arens1963}.
Since the maximal ideal space of $A(Y)$ is $Y$, the class of
$F\overline\chi$ is already trivial in $A(Y)^{-1}/\exp A(Y)$. Hence
$F\overline\chi=e^G$ for some $G\in A(Y)$.
\end{proof}

\subsubsection{The optimal positive range}

\begin{theorem}[Norm-controlled inversion for Toeplitz operators]
\label{thm:toeplitz-half-threshold}
For every $\delta>1/2$ there exists a constant $C_T(\delta)<\infty$ such that, for
all $f\in W$ satisfying
\begin{equation}
  \norm f_W\le1,
  \qquad
  \inf_{z\in\T}\abs{f(z)}\ge\delta,
  \qquad
  \Ind(f)=0,
  \label{eq:toeplitz-main-assumptions}
\end{equation}
the operator $T_f:W^+\to W^+$ is invertible and
\begin{equation}
  \norm{T_f^{-1}}_{\mathcal L(W^+)}\le C_T(\delta).
  \label{eq:toeplitz-main-conclusion}
\end{equation}
\end{theorem}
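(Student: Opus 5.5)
We argue by contradiction, as in the proof of Theorem~\ref{thm:uniform-fourier-inversion}, and aim to produce \emph{uniformly controlled logarithms}. By Lemma~\ref{lem:toeplitz-product-rules}, it suffices to show that every $f$ satisfying \eqref{eq:toeplitz-main-assumptions} can be written as $f=e^{\ell}$ with $\norm\ell_W\le C(\delta)$. Invertibility of $T_f$ itself is already given by Krein's criterion \eqref{eq:krein-criterion}.

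Suppose this fails. Then there is a sequence $(f_j)\subset W$ satisfying \eqref{eq:toeplitz-main-assumptions} for which $\inf\{\norm\ell_W: e^\ell=f_j\}\to\infty$. The group is already fixed ($G=\T$, with countable dual $\Z$), so no reduction is needed. We apply Theorem~\ref{thm:profile-decomposition} and write $f_j=h_j+w_j$. As in Subsection~\ref{subsec:fourier-algebra-positive}, we build $X=\T\times\T^I$, the core $H$, the subgroups $K_j=\phi_j(\T)$ and the limit $K$. We obtain $|H|\ge\delta$ on $K$ and the mass gap $\beta\le1-\delta<\delta$, and we choose $\beta<r<\rho<\delta$. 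Exactly as before, Lemmas~\ref{lem:analytic-pullback}, \ref{lem:h-outside-rho} and~\ref{lem:inverse-power-identity} give, for large $j$, that $\inf|h_j|>\rho$, that $\norm{w_j}_W\le r$, and that
\[
  \sum_{k\ge0}\rho^k\norm{h_j^{-k-1}}_W\le C.
\]
Hence
\[
  \ell_j^{(1)}:=\log(1+h_j^{-1}w_j)=\sum_{k\ge1}\frac{(-1)^{k+1}}{k}h_j^{-k}w_j^k
\]
converges in $W$. Its norm is bounded uniformly in $j$ by $\rho^{-1}\cdots$, that is, by a multiple of $C\,r/\rho$ after using $\norm{w_j}^k\le r^k\le\rho^k$. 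Moreover $f_j=h_je^{\ell_j^{(1)}}$. It therefore remains to find a controlled logarithm of $h_j$.

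This is the step where the profile core needs a logarithmic rather than a resolvent argument. First, we claim that $K$ is connected. Indeed, $\widehat K\cong\widehat X/\mathcal L$. If $n\lambda\in\mathcal L$ with $n\ne0$, then $(\lambda|_{K_j})^n=1$ for large $j$. Since $K_j\cong\T$ is connected, this forces $\lambda|_{K_j}=1$, so $\lambda\in\mathcal L$. Thus $\widehat K$ is torsion-free, and $K$ is connected by \cite[Theorem~2.5.6(c)]{Rudin1990}. By the Wiener lemma, $H|_K$ is invertible in $A(K)$. The Bohr--van Kampen--Arens--Royden factorization then gives
\[
  H|_K=\chi e^{g}
\]
for some character $\chi\in\widehat K$ and some $g\in A(K)$. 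We lift $\chi$ to some $\lambda\in\widehat X$, and we lift $g$ to $\tilde g\in A(X)$ with the same norm, using the metric-quotient property of restriction. Then the function
\[
  D:=H e^{-\tilde g}\overline\lambda-1\in A(X)
\]
vanishes on $K$, so Lemma~\ref{lem:restriction-norm-stability} gives $\norm{D|_{K_j}}_{A(K_j)}\to0$. Pulling back by $\phi_j$, we obtain
\[
  h_j=z^{m_j}\,e^{\tilde g\circ\phi_j}\,(1+d_j),
\]
where $z^{m_j}=\lambda\circ\phi_j$, $\norm{\tilde g\circ\phi_j}_W\le\norm{\tilde g}_{A(X)}$, and $\norm{d_j}_W\to0$. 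For large $j$ we set $\ell_j^{(2)}:=\log(1+d_j)$, which is small in $W$.

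Finally, we check that the winding numbers vanish. Since $|h_j|>\rho>r\ge\norm{w_j}_\infty$, the homotopy $h_j+tw_j$, $0\le t\le1$, avoids the origin. Hence $\Ind(h_j)=\Ind(f_j)=0$. The factors $e^{\tilde g\circ\phi_j}$ and $1+d_j$ both have index zero (the second because $\norm{d_j}_W<1$), so $m_j=0$. Therefore
\[
  f_j=e^{\ell_j},\qquad \ell_j:=\tilde g\circ\phi_j+\ell_j^{(2)}+\ell_j^{(1)},
\]
and $\sup_j\norm{\ell_j}_W<\infty$. This contradicts the choice of the sequence. Lemma~\ref{lem:toeplitz-product-rules} then yields \eqref{eq:toeplitz-main-conclusion}.

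The main obstacle I expect is the connectedness of $K$ and the transfer of the factorization from $K$ back to the $K_j$, including the bookkeeping that the lifted character really becomes $z^{m_j}$ on each $K_j$ and the check that $m_j=0$. The resolvent part is a direct reuse of the earlier argument.
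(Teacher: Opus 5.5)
Your proof is correct. It reaches the controlled logarithm by a different route from the paper's.

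\textbf{The paper's route.} It first shows $\chi=1$ on $K$ by the winding-number argument. It then writes a logarithm of the bivariate resolvent, $\mathcal U_K=G-\sum_{n\ge1}\frac1n(\rho z/H)^n\in A_+(K\times\T)$ with $e^{\mathcal U_K}=H-\rho z$, and lifts it one-sidedly to $X\times\T$. Correcting the defect on $K_j\times\T$ gives exact, uniformly bounded logarithms $\mathcal U_j$ of $h_j(t)-\rho z$. Finally it substitutes $z=-w_j/\rho$ via the contractive homomorphism $\Psi_u$.

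\textbf{Your route.} You split multiplicatively, $f_j=h_j(1+h_j^{-1}w_j)$, and handle the two factors separately.
\begin{itemize}
\item For the second factor you reuse the weighted inverse-power bound $\sum_{k\ge0}\rho^k\norm{h_j^{-k-1}}_W\le C$ from the proof of Theorem~\ref{thm:uniform-fourier-inversion}, together with $\inf\abs{h_j}>\rho$ from Lemma~\ref{lem:h-outside-rho}. The clean form of your somewhat garbled estimate is $\norm{\ell_j^{(1)}}_W\le\sum_{k\ge1}r^k\norm{h_j^{-k}}_W\le rC$. The identity $e^{\ell_j^{(1)}}=1+h_j^{-1}w_j$ can be checked pointwise, since $\abs{w_j/h_j}\le r/\rho<1$.
\item For the first factor you need only a univariate Bohr--van Kampen--Arens--Royden factorization on $K$. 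You transport it to $K_j$ up to a multiplicative error $1+d_j$ with $\norm{d_j}_W\to0$. You then kill the character directly on each $K_j$ by showing $m_j=\Ind(h_j)=0$, rather than proving $\chi=1$ on $K$.
\end{itemize}

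\textbf{What each buys.} Your version is more modular. The Toeplitz theorem becomes the resolvent estimate already proved for Theorem~\ref{thm:uniform-fourier-inversion}, plus one univariate logarithm step. It avoids the bivariate logarithm, the one-sided lift of a logarithm, and $\Psi_u$. The paper's version packages everything into a single object on $K\times\T$ and does not rely on the inverse-power identity or on Lemma~\ref{lem:h-outside-rho} at all. It needs only the cruder bound $\inf\abs{h_j}>\delta-\rho$ from the homotopy. In effect, it trades the resolvent estimate for the existence of a uniformly bounded logarithm of $h_j-\rho z$ in $A_+$.
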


\begin{proof}
Fix $\delta>1/2$ and suppose that the assertion fails. Then there is a sequence
$(f_j)\subset W$ such that
\begin{equation}
  \norm{f_j}_W\le1,
  \qquad
  \abs{f_j}\ge\delta,
  \qquad
  \Ind(f_j)=0,
  \qquad
  \norm{T_{f_j}^{-1}}\longrightarrow\infty.
  \label{eq:toeplitz-bad-sequence}
\end{equation}
Since $\norm{f_j}_2\le\norm{f_j}_W\le1$, passing to a further subsequence, we may
also assume that $\norm{f_j}_2^2$ converges.

We now use the machinery of Subsection~\ref{subsec:fourier-algebra-positive} with $G=\T$. By
Theorem~\ref{thm:profile-decomposition}, after passing to a subsequence we have
\[
  f_j=h_j+w_j,
  \qquad
  \norm{w_j}_W\longrightarrow\beta,
  \qquad
  \norm{w_j}_2\longrightarrow0.
\]
Using exactly the limiting-core construction
\eqref{eq:limiting-core-H}--\eqref{eq:limiting-subgroup-K}, we obtain a compact
Abelian group
\[
  X=\T\times\T^I,
\]
compact subgroups $K_j,K\subset X$, homomorphisms
$\phi_j:\T\to K_j$, and a function $H\in A(X)$ such that
\[
  H\circ\phi_j=h_j.
\]
Lemma~\ref{lem:H-lower-bound} gives
\begin{equation}
  \abs{H}\ge\delta
  \qquad\text{on }K,
  \label{eq:toeplitz-H-lower}
\end{equation}
and the mass-gap argument \eqref{eq:key-mass-gap} gives
\begin{equation}
  \beta\le1-\delta<\delta.
  \label{eq:toeplitz-mass-gap}
\end{equation}
Choose once and for all
\begin{equation}
  \beta<\rho<\delta.
  \label{eq:toeplitz-rho}
\end{equation}

For large $j$ we have $\norm{w_j}_W<\rho$. Hence, for $0\le s\le1$,
\[
  \abs{f_j-sw_j}
  \ge\delta-s\norm{w_j}_\infty
  >\delta-\rho>0.
\]
The functions $f_j$ and $h_j=f_j-w_j$ are therefore homotopic through non-vanishing
functions, and
\begin{equation}
  \Ind(h_j)=\Ind(f_j)=0,
  \qquad
  \inf_{\T}\abs{h_j}>\delta-\rho.
  \label{eq:toeplitz-core-index}
\end{equation}

At this point the Toeplitz problem requires one additional ingredient that was not
needed for ordinary inversion in Subsection~\ref{subsec:fourier-algebra-positive}: the limiting core must have a
logarithm. We verify this from the zero-index assumption.

Let $\mathcal L=K^{\perp}\subset\widehat X$ be the group of stable relations from
\eqref{eq:limiting-subgroup-K}. Since here
\[
  \widehat X=\Z\oplus\Z^{(I)},
\]
the subgroup $\mathcal L$ is pure.\footnote{A subgroup $L$ of an Abelian group
$A$ is called \emph{pure} if $L\cap mA=mL$ for every $m\ge1$. Since
$\widehat X$ is torsion-free, this is equivalent here to saying that
$m\lambda\in L$ for some non-zero integer $m$ implies $\lambda\in L$.} Indeed, for
$\lambda=(n,(a_r))\in\widehat X$ the restriction of $\lambda$ to $K_j$ corresponds,
under $\phi_j$, to the integer
\[
  d_j(\lambda)=n+\sum_{r\in I}a_r\gamma_j^{(r)}.
\]
If $m\lambda\in\mathcal L$ for some non-zero integer $m$, then
$m d_j(\lambda)=0$ for all sufficiently large $j$, hence $d_j(\lambda)=0$ there and
$\lambda\in\mathcal L$. Consequently, by \cite[Theorem~2.1.2]{Rudin1990},
\[
  \widehat K\cong\widehat X/\mathcal L.
\]
This quotient is torsion-free, and therefore $K$ is connected by
\cite[Theorem~2.5.6(c)]{Rudin1990}.

Applying the Bohr--van Kampen--Arens--Royden factorization above to $H|_K$, we obtain
\[
  H|_K=\chi e^G.
\]
We claim that $\chi=1$. Choose a representative
$\widetilde\chi\in\widehat X$ of $\chi\in\widehat K\cong\widehat X/\mathcal L$,
and use the metric quotient property of the restriction $A(X)\to A(K)$ established
in Subsection~\ref{subsec:fourier-algebra-positive} to lift $G$ to some $\widetilde G\in A(X)$. Then
\[
  R:=H-\widetilde\chi e^{\widetilde G}
\]
vanishes on $K$. Lemma~\ref{lem:restriction-norm-stability} therefore yields
\[
  \norm{R|_{K_j}}_{A(K_j)}\longrightarrow0.
\]
After pulling back by $\phi_j$ and setting
\[
  q_j:=(\widetilde\chi e^{\widetilde G})\circ\phi_j,
\]
we obtain $\norm{q_j-h_j}_W\to0$. In particular,
\[
  \norm{q_j-h_j}_{\infty}
  \le
  \norm{q_j-h_j}_{W}
  \longrightarrow0.
\]
By \eqref{eq:toeplitz-core-index}, $\inf_{\T}|h_j|>\delta-\rho$ for all large
$j$. Hence the straight-line homotopy from $h_j$ to $q_j$ stays non-vanishing for
all sufficiently large $j$, and therefore $q_j$ and $h_j$ have the same winding
number. The exponential factor in $q_j$ has winding number zero, whereas
$\widetilde\chi\circ\phi_j(z)=z^{d_j(\widetilde\chi)}$. Hence
\[
  0=\Ind(q_j)=d_j(\widetilde\chi)
\]
for all sufficiently large $j$. Thus $\widetilde\chi\in\mathcal L$, so its class
$\chi$ in $\widehat K$ is trivial. We have proved that
\begin{equation}
  H|_K=e^G
  \qquad\text{for some }G\in A(K).
  \label{eq:toeplitz-H-exponential}
\end{equation}

We now return to the one-sided algebra $A_+$ introduced in Subsection~\ref{subsec:fourier-algebra-positive}. On
$K\times\T$ consider
\[
  \mathcal R_\rho(y,z):=H(y)-\rho z.
\]
By \eqref{eq:toeplitz-H-exponential},
\[
  \mathcal U_K
  :=G-\sum_{n=1}^{\infty}\frac1n
     \left(\frac{\rho z}{H}\right)^n
\]
is a well-defined element of $A_+(K\times\T)$ and
\begin{equation}
  e^{\mathcal U_K}=H-\rho z.
  \label{eq:toeplitz-limit-log}
\end{equation}
Indeed, the spectral radius of
\[
  T:=\frac{\rho z}{H}
\]
in $A(K\times\T)$ is at most $\rho/\delta<1$. Choose $r$ with
$r(T)<r<1$. By the spectral-radius formula there is $N_0$ such that
$\norm{T^n}^{1/n}\le r$ for every $n\ge N_0$, hence
$\norm{T^n}\le r^n$ for $n\ge N_0$. Therefore
\[
  \sum_{n=1}^{\infty}\frac{\norm{T^n}}{n}<\infty,
\]
which proves norm convergence of the logarithmic series. No estimate
$\norm{\rho/H}<1$ is required.

By the norm-preserving one-sided lifting property
\eqref{eq:positive-spectrum-lift}, choose
$\mathcal U\in A_+(X\times\T)$ whose restriction to $K\times\T$ is
$\mathcal U_K$. Put
\[
  D:=e^{\mathcal U}-(H-\rho z).
\]
Then $D|_{K\times\T}=0$. Lemma~\ref{lem:restriction-norm-stability}
gives
\begin{equation}
  \norm{D|_{K_j\times\T}}_{A(K_j\times\T)}\longrightarrow0.
  \label{eq:toeplitz-log-defect}
\end{equation}
Let
\[
  \mathcal U_j^{(0)}:=\mathcal U|_{K_j\times\T},
  \qquad
  F_j:=(H-\rho z)|_{K_j\times\T}.
\]
Then
\[
  E_j:=e^{-\mathcal U_j^{(0)}}F_j-1
  =
  -e^{-\mathcal U_j^{(0)}}
  D|_{K_j\times\T}.
\]
Restriction is contractive and
$\norm{\mathcal U_j^{(0)}}\le\norm{\mathcal U}$, so
\[
  \norm{E_j}
  \le
  e^{\norm{\mathcal U}}\,
  \norm{D|_{K_j\times\T}}
  \longrightarrow0
\]
by \eqref{eq:toeplitz-log-defect}. For large $j$ we have $\norm{E_j}<1/2$,
so
\[
  C_j:=\log(1+E_j)
  =\sum_{m=1}^{\infty}\frac{(-1)^{m+1}}{m}E_j^m
\]
converges in $A_+(K_j\times\T)$ and $\norm{C_j}\le\log2$. Thus
\[
  \mathcal U_j:=\mathcal U_j^{(0)}+C_j
\]
satisfies
\begin{equation}
  e^{\mathcal U_j}=F_j,
  \qquad
  \sup_j\norm{\mathcal U_j}_{A(K_j\times\T)}<\infty.
  \label{eq:toeplitz-exact-logs-kj}
\end{equation}
Pulling back by $\phi_j\times\operatorname{id}_{\T}$, we may regard
$\mathcal U_j$ as an element of $A_+(\T\times\T)$ satisfying
\begin{equation}
  e^{\mathcal U_j(t,z)}=h_j(t)-\rho z,
  \qquad
  \sup_j\norm{\mathcal U_j}_{A(\T\times\T)}<\infty.
  \label{eq:toeplitz-bivariate-log}
\end{equation}

Finally, if
\[
  V(t,z)=\sum_{n=0}^{\infty}V_n(t)z^n\in A_+(\T\times\T)
\]
and $u\in W$ with $\norm u_W\le1$, then
\[
  \Psi_u(V):=\sum_{n=0}^{\infty}V_nu^n
\]
defines a contractive homomorphism $A_+(\T\times\T)\to W$. Indeed,
\[
  \norm{\Psi_u(V)}_W
  \le\sum_{n\ge0}\norm{V_n}_W\norm{u}_W^n
  \le\sum_{n\ge0}\norm{V_n}_W
  =\norm{V}_{A(\T\times\T)}.
\]
Multiplicativity follows from the absolute convergence of the corresponding Cauchy
products. Since
$\norm{w_j}_W<\rho$, take
\[
  u_j:=-\frac{w_j}{\rho}
\]
and set
\[
  \ell_j:=\Psi_{u_j}(\mathcal U_j).
\]
By \eqref{eq:toeplitz-bivariate-log},
\[
  \sup_j\norm{\ell_j}_W<\infty,
\]
and, because $\Psi_{u_j}$ is a continuous homomorphism,
\[
  e^{\ell_j}
  =\Psi_{u_j}(e^{\mathcal U_j})
  =h_j-\rho u_j
  =h_j+w_j
  =f_j.
\]
Lemma~\ref{lem:toeplitz-product-rules} now gives
\[
  \sup_j\norm{T_{f_j}^{-1}}<\infty,
\]
contradicting \eqref{eq:toeplitz-bad-sequence}. This proves the theorem.
\end{proof}

\begin{remark}
Theorem~\ref{thm:toeplitz-half-threshold} closes the interval left open by the
largest-coefficient method. The argument does not turn the estimate for
$\norm{f^{-1}}_W$ from Subsection~\ref{subsec:fourier-algebra-positive} into a Toeplitz estimate. Rather, it reuses the
profile compactness and the mass gap from that subsection and adds the zero-index
information in order to obtain a uniformly controlled logarithm. The sharpness can also be read directly from Nikolski's formula for the analytic
Wiener algebra $W^+$, where the optimal inversion majorant on the closed disc is
$(2\delta-1)^{-1}$ for $1/2<\delta\le1$; see \cite[p.~1933]{Nikolski1999}.
For analytic symbols $T_f$ is multiplication by $f$, hence
$\norm{T_f^{-1}}=\norm{f^{-1}}_{W^+}$. Thus the threshold $1/2$ is optimal.
\end{remark}

\section{Sharpness at the critical threshold}
\label{sec:sharpness}

The question of uniform control of inverse norms in Wiener-type algebras predates the
identification of the critical threshold.  As noted by Nikolski in his historical account
\cite[Subsection~0.5]{Nikolski1999}, Stafney appears to have been the first to point out
that inverse norms in the Wiener algebra can be unbounded under simultaneous uniform
control of the algebra norm and separation of the range from zero
\cite{Stafney1967}.  More precisely, his result implies that $c_1(\delta,A(\mathbb T),
\mathbb T)=\infty$ for some $\delta>0$, but his argument does not determine any
specific value of $\delta$.  Shapiro subsequently obtained related counterexamples by a
more constructive method \cite{Shapiro1979}.  These early results revealed the failure of
uniform norm control, but did not identify the critical constant.

The sharpness of the threshold $1/2$ was already established by Nikolski
\cite[Theorem~3.2.2 and Corollary~3.2.5]{Nikolski1999}.  If $H$ is an infinite
locally compact Abelian group and $\mathcal A$ is one of $M(H)$, $M_d(H)$, or
$L^1(H)+\mathbb C\delta_0$, the visible evaluations are indexed by $\widehat H$,
and his result gives
\[
  c_1(\delta;\mathcal A,\widehat H)\ge \frac{1}{2\delta-1},
  \qquad \frac12<\delta\le1,
\]
and
\[
  c_1(\delta;\mathcal A,\widehat H)=\infty,
  \qquad 0<\delta\le\frac12.
\]
Thus the critical constant is at least $1/2$, with failure of norm control already
at the endpoint. Taking the discrete group $H=\widehat G$, so that
$\widehat H\cong G$, includes the Fourier algebra $A(G)$ of every infinite
compact Abelian group.

Nikolski's proof is indirect and uses a passage to the Bohr compactification from
\cite[Subsections~3.1--3.2]{Nikolski1999}.  On a nondiscrete group one starts
with a \v{S}re\u{\i}der measure $\nu$ of norm one, with real visible transform and full
Gelfand spectrum equal to the closed unit disc.  For $1/2<\delta<1$, with $\nu^2:=\nu*\nu$, the element
\[
  \mu_\delta=\delta\delta_0+(1-\delta)\nu^2
\]
has norm one and visible transform contained in $[\delta,1]$, whereas spectral
mapping gives
\[
  \sigma(\mu_\delta)=\delta+(1-\delta)\overline{\mathbb D}.
\]
Consequently $\norm{\mu_\delta^{-1}}\ge(2\delta-1)^{-1}$.  For a general infinite group the construction is carried out on the Bohr
compactification and transferred back using almost-periodic approximation, the
Bochner characterization, and semicontinuity of inversion.  This argument is abstract: it depends on \v{S}re\u{\i}der measures and the invisible
spectrum rather than on explicit elements of the algebra. Nevertheless, combined
with the qualitative inversion theorem of Ohrysko and Wasilewski
\cite[Theorem~2.4]{OhryskoWasilewski2020}, Nikolski's approach explains the origin
of the constant $1/2$: the qualitative invertibility threshold for measure algebras
$M(H)$ on nondiscrete groups becomes, through this transfer, a quantitative
threshold for norm-controlled inversion in Fourier algebras $A(G)$ of infinite
compact groups. The purpose of the present section is to give a direct and independent
construction. It is formulated entirely in $A(G)$ in the compact case and then
transferred to the unitization of $L^1$ by a concrete smearing procedure.

\subsection{Infinite compact groups}
\label{subsec:negative-compact-fourier}

We first give the direct compact-group construction. Let $G$ be an infinite compact
Abelian group. We construct the examples directly in $A(G)$, using the algebraic
structure of $\widehat G$ to choose the Fourier frequencies.

\begin{theorem}[Sharpness of the threshold $1/2$]
\label{thm:sharp-half-compact}
Let $G$ be an infinite compact Abelian group. There exists a sequence
$(f_n)\subset A(G)$ such that
\begin{equation}
  \norm{f_n}_{A(G)}\le1,
  \qquad
  \inf_{x\in G}\abs{f_n(x)}\ge\frac12,
  \qquad
  \norm{f_n^{-1}}_{A(G)}\longrightarrow\infty.
  \label{eq:sharp-half-main}
\end{equation}
In particular, norm-controlled inversion fails at $\delta=1/2$ on every infinite
compact Abelian group.
\end{theorem}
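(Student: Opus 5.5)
The plan is to reduce to two model situations and, in each, use a Riesz-type product $u_N$ with $u_N\ge0$ and $\|u_N\|_{A(G)}=1$. We then set
\[
  f_N:=\tfrac12\bigl(1+u_N\bigr).
\]
The first two requirements of \eqref{eq:sharp-half-main} are automatic: $\|f_N\|_{A(G)}\le\tfrac12+\tfrac12=1$, and $f_N\ge\tfrac12$ pointwise because $u_N\ge0$. The whole difficulty is to show that
\[
  \|f_N^{-1}\|_{A(G)}=2\,\|(1+u_N)^{-1}\|_{A(G)}\to\infty .
\]

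First, we pick the frequencies. Since $G$ is infinite, $\Gamma=\widehat G$ is infinite, and we choose in $\Gamma$ a sequence $\chi_1,\chi_2,\dots$ that is dissociate. Concretely, this means that the products $\prod_k\chi_k^{\varepsilon_k}$ with $\varepsilon_k\in\{-1,0,1\}$ and finitely many $\varepsilon_k\neq0$ are pairwise distinct. Such a choice is possible in both cases:
\begin{itemize}
  \item if $\Gamma$ has an element $\gamma$ of infinite order, take $\chi_k=\gamma^{3^k}$;
  \item if $\Gamma$ is torsion, choose inductively $\chi_{k+1}$ outside the finite set generated with exponents in $\{-2,\dots,2\}$ by $\chi_1,\dots,\chi_k$.
\end{itemize}
If some $\chi_k$ has order $2$, we use $(1+\chi_k)/2$ in place of $(1+\operatorname{Re}\chi_k)/2$. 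We then put
\[
  u_N:=\prod_{k=1}^N\frac{1+\operatorname{Re}\chi_k}{2}.
\]
Dissociativity shows that all Fourier coefficients of $u_N$ are non-negative. Hence $\|u_N\|_{A(G)}=\widehat{u_N}(0)\cdot 2^{0}$ summed over the support equals $u_N(0)=1$. The same product structure shows that the map $z_k\mapsto\chi_k$ transports $A$-norms of functions of the variables $\operatorname{Re}\chi_1,\dots,\operatorname{Re}\chi_N$ isometrically. So the computation may be done on $\T^N$, or on the appropriate finite product, with independent coordinates.

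Second, the blow-up. I would argue by contradiction, in the spirit of Nikolski's Šreider construction recalled above. Suppose $\|(1+u_N)^{-1}\|_{A}\le C$ for all $N$. Pass to coefficientwise limits, using the countable support and a diagonal argument as in Theorem~\ref{thm:profile-decomposition}. Then $u_N$ converges weak$^*$ to the infinite Riesz product measure $\mu$ on the Bohr-type dual, and $(1+u_N)^{-1}$ converges to some $v$ with $\|v\|\le C$.

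The point to verify is that $(1+\mu)*v=\delta$ persists in the limit. This should follow from the product structure: the $k$-th factor acts on disjoint frequency "digits", so convolution against $u_N$ is compatible with the coefficientwise limits. That would make $1+\mu$ invertible in the measure algebra of $\Gamma_d$-dual type.

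On the other hand, a Riesz product with infinitely many independent factors has Gelfand spectrum containing the closed unit disc; this is the classical Šreider/Zafran fact, since one can build characters sending each factor $(1+\operatorname{Re}\chi_k)/2$ to any prescribed point of a disc. In particular $-1\in\sigma(\mu)$, so $1+\mu$ is not invertible, which is a contradiction.

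A more hands-on alternative, which I would try first because it avoids the limiting measure, is to exhibit for each $N$ a functional $\Lambda_N$ on $A(G)$ with $|\Lambda_N(\gamma)|\le1$ on characters that is multiplicative on the subalgebra generated by $\chi_1^{\pm1},\dots,\chi_N^{\pm1}$ up to controlled error. For example, take a Riesz-product test function $\prod_k(1+\operatorname{Re}(c\,\chi_k))$ with $|c|\le1$ chosen so that $\Lambda_N(u_N)$ approaches $-1$. Then
\[
  1=\Lambda_N\bigl((1+u_N)(1+u_N)^{-1}\bigr)\approx\bigl(1+\Lambda_N(u_N)\bigr)\Lambda_N\bigl((1+u_N)^{-1}\bigr)
\]
would force $\|(1+u_N)^{-1}\|\gtrsim|1+\Lambda_N(u_N)|^{-1}\to\infty$.

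The main obstacle is precisely this step: making either the limit passage or the approximate multiplicativity rigorous, so that the invisible point $-1$ of the spectrum of the limiting Riesz product is detected uniformly in $N$. Everything else, namely the norm and the lower bound $\ge\tfrac12$, is immediate from positivity of $u_N$.
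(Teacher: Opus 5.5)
There is a genuine gap, and it goes beyond the step you flag: for part of the class of groups the construction itself fails. Suppose the $\chi_k$ have order two. This is unavoidable for the Cantor group $G=\{\pm1\}^{\mathbb N}$, whose dual $\bigoplus\Z/2$ consists of elements of order at most two. Then $\frac12(1+\chi_k)$ takes only the values $0$ and $1$, so $u_N$ is an idempotent (the indicator of an open subgroup). Hence $(1+u_N)^{-1}=1-\frac12u_N$, and for every $N$
\[
  f_N^{-1}=2-u_N,
  \qquad
  \norm{f_N^{-1}}_{A(G)}\le2+\norm{u_N}_{A(G)}=3.
\]
The spectral fact you invoke fails accordingly. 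Every multiplicative functional sends $\delta_e$ with $2e=0$ to $\pm1$, and hence sends each factor $\frac12(\delta_0+\delta_e)$ to $0$ or $1$. The limiting product measure is the Haar measure of a compact subgroup, with spectrum $\{0,1\}$, so $-1$ is not in it.

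Where the factors are not idempotents, the decisive step is still missing, and the limit passage you describe does not produce the Riesz product. For each fixed $\gamma$, the coefficient $\widehat{u_N}(\gamma)$ carries a factor $\frac12$ for every $k\le N$ not used by $\gamma$, so $u_N\to0$ coefficientwise. This limit carries no spectral information.

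To see a Riesz-type measure you must embed $\langle\chi_k\rangle$ injectively into a compact group in which $\chi_k\to0$, for instance $\gamma^{3^k}\to0$ in $\mathbb Z_3$. The Bohr compactification will not do, since no sequence of distinct elements of a discrete group converges in it. You would then have to prove that $-1$ lies in the Gelfand spectrum of the limiting product measure. That is a Hewitt--Kakutani/Zafran-type independence theorem which you only assert, and it is the whole difficulty.

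Two smaller points also need fixing:
\begin{itemize}
\item The transport to $\T^N$ is isometric only on $\{-1,0,1\}$-words. Since $(1+u_N)^{-1}$ involves all powers, the pullback is merely contractive on it (e.g.\ $3e_1$ and $e_2$ both map to $\gamma^9$). So lower bounds computed on $\T^N$ do not transfer to $G$.
\item In the torsion case, avoiding $\{-2,\dots,2\}$-combinations is not enough. In the Pr\"ufer group $\mathbb Z[\frac12]/\mathbb Z$ (written additively), $\chi_1=\frac14$ and $\chi_2=\frac18$ pass your test, but $\chi_2=\chi_1-\chi_2$. Square roots must be excluded as well, as in Lemma~\ref{lem:structural-dichotomy}, and infinite two-torsion has to be treated separately.
\end{itemize}

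The paper avoids the invisible spectrum of a norm-one element altogether. It takes the unimodular $b_n=\exp(i\log n\,d_n)$, which has $\norm{b_n}_{A(G)}\le n$, and sets $f_n=\alpha_n-\beta_nb_n$ with $\beta_n=\frac1{2(n+1)}$. The blow-up is driven by norm growth: $\operatorname{Re}\Lambda_n(b_n^k)\gtrsim n^k$ for $k\le K_n$. Here $\Lambda_n(f)=\sum_{\gamma\in S_n}i^{-s(\gamma)}\widehat f(\gamma)$ is an explicit contractive phase-correcting functional. It is exact on products of $\{-1,0,1\}$-supported factors, by strong dissociation or $\mathbb F_2$-independence. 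The Bessel tails are negligible because $u=k\log n/n$ is small.

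The truncation identity $g_n=S_{n,K}+q_n^{K+1}b_n^{K+1}g_n$ ensures that $\Lambda_n$ is only ever applied to finitely many powers. This is the controlled partial multiplicativity your ``hands-on alternative'' asks for, but restricted to the power levels actually used. The genuine characters of $A(G)$ are point evaluations, where $u_N\in[0,1]$. In the infinite two-torsion case, the complex factors $\cos\tau_n+i\sin\tau_n\,\gamma_j$ replace your idempotents $\frac12(1+\gamma_j)$.
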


\begin{remark}
The assumption that $G$ is infinite is necessary. If $G$ is finite, then $A(G)$ is
finite-dimensional, and the set
\[
  \left\{f\in A(G):\norm{f}_{A(G)}\le1,
  \ \min_{x\in G}\abs{f(x)}\ge\frac12\right\}
\]
is compact. Since inversion is continuous on the group of invertible elements, the
norms of the inverses are uniformly bounded on this set.
\end{remark}

The construction depends on a simple dichotomy for the two-torsion subgroup
\[
  \widehat G[2]:=\{\gamma\in\widehat G:\gamma^2=1\}.
\]

\begin{definition}
More generally, if $\Gamma$ is an Abelian group written multiplicatively, a finite
family $\gamma_1,\ldots,\gamma_n\in\Gamma$ will be called
\emph{strongly dissociated} if the map
\[
  \{-1,0,1\}^n\ni(\varepsilon_1,\ldots,\varepsilon_n)
  \longmapsto
  \gamma_1^{\varepsilon_1}\cdots\gamma_n^{\varepsilon_n}
\]
is injective.
\end{definition}

\begin{lemma}[Structural dichotomy]
\label{lem:structural-dichotomy}
Let $\Gamma$ be an infinite Abelian group and put
\[
  \Gamma[2]:=\{\gamma\in\Gamma:\gamma^2=1\}.
\]
Exactly one of the following two alternatives occurs.
\begin{enumerate}[label=\textup{(\roman*)}]
  \item $\Gamma[2]$ is finite. Then $\Gamma$ contains an infinite sequence
  $\gamma_1,\gamma_2,\ldots$ every finite initial segment of which is strongly
  dissociated.
  \item $\Gamma[2]$ is infinite. Then $\Gamma[2]$, viewed as a vector space
  over $\mathbb F_2$, contains an infinite linearly independent sequence.
\end{enumerate}
\end{lemma}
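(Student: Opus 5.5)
The two alternatives are clearly mutually exclusive, so the work is to prove the conclusion in each case.

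\emph{Case (ii).} If $\Gamma[2]$ is infinite, it is an elementary Abelian $2$-group, hence a vector space over $\mathbb F_2$. An infinite-dimensional vector space (infinite implies infinite dimension here, since a finite-dimensional $\mathbb F_2$-space is finite) contains an infinite linearly independent sequence. We construct it inductively: given $\gamma_1,\dots,\gamma_n$ independent, their span is finite, so we can pick $\gamma_{n+1}$ outside it.

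\emph{Case (i).} Suppose $\Gamma[2]$ is finite. We build the sequence inductively, maintaining strong dissociation of $\gamma_1,\dots,\gamma_n$. The injectivity of $(\varepsilon_1,\ldots,\varepsilon_{n+1})\mapsto\prod\gamma_i^{\varepsilon_i}$ fails exactly when two distinct sign vectors give equal products. Given the inductive hypothesis, any such failure must involve distinct $\varepsilon_{n+1}\neq\varepsilon'_{n+1}$ (otherwise cancel $\gamma_{n+1}^{\varepsilon_{n+1}}$ and use injectivity at level $n$). Thus $\gamma_{n+1}^{k}\in F_n$ with $k=\varepsilon'_{n+1}-\varepsilon_{n+1}\in\{\pm1,\pm2\}$, where
\[
F_n:=\{\gamma_1^{e_1}\cdots\gamma_n^{e_n}: e_i\in\{-2,\dots,2\}\}
\]
is finite. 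So the bad set for $\gamma_{n+1}$ is
\[
B_n:=\{\gamma:\gamma\in F_n\}\cup\{\gamma:\gamma^2\in F_n\},
\]
since $\gamma^{-k}\in F_n$ iff $\gamma^{k}\in F_n$, as $F_n=F_n^{-1}$. The first set is finite. The second is a finite union of sets $\{\gamma:\gamma^2=\phi\}$, and each such set is empty or a coset of $\Gamma[2]$, hence finite by hypothesis. So $B_n$ is finite, and since $\Gamma$ is infinite we can choose $\gamma_{n+1}\notin B_n$. The base case $n=1$ needs only $\gamma_1^2\neq1$ and $\gamma_1\neq1$, which is the same argument with $F_0=\{1\}$.

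The only real point is the observation that solutions of $\gamma^2=\phi$ form a coset of $\Gamma[2]$; everything else is bookkeeping of the sign differences.
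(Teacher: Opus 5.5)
Your proof is correct and follows essentially the same route as the paper. Your $F_n$ is the paper's $D_n$, you use the same observation that the solutions of $\gamma^2=\phi$ form a coset of the finite group $\Gamma[2]$, and you make the same inductive choice of $\gamma_{n+1}$ outside a finite bad set (with base case $\gamma_1\notin\Gamma[2]$). Case (ii) is likewise the same trivial infinite-dimensionality argument.
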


\begin{proof}
If $\Gamma[2]$ is infinite, then it has infinite dimension over $\mathbb F_2$,
and (ii) follows immediately.

Assume that $\Gamma[2]$ is finite. Since $\Gamma$ is infinite, choose
$\gamma_1\notin\Gamma[2]$; then the one-element
family $(\gamma_1)$ is strongly dissociated. Suppose inductively that
$\gamma_1,\ldots,\gamma_n$ have been chosen strongly dissociated. Put
\[
  D_n:=\left\{
  \gamma_1^{\eta_1}\cdots\gamma_n^{\eta_n}:
  \eta_j\in\{-2,-1,0,1,2\}\right\}.
\]
The set $D_n$ is symmetric under inversion. Moreover,
\[
  D_n^{1/2}:=\{\gamma\in\Gamma:\gamma^2\in D_n\}
\]
is finite, because every non-empty fiber of the homomorphism
$\gamma\mapsto\gamma^2$ is a coset of the finite group $\Gamma[2]$. Hence we may
choose
\[
  \gamma_{n+1}\notin D_n\cup D_n^{1/2}.
\]
If two $\{-1,0,1\}$-representations involving $\gamma_{n+1}$ were equal, then after
division one would obtain
\[
  \gamma_{n+1}^{a}=\eta,
  \qquad
  a\in\{-2,-1,0,1,2\},
  \quad \eta\in D_n.
\]
The case $a=0$ is ruled out by the induction hypothesis, while $a=\pm1$ would imply
$\gamma_{n+1}\in D_n$ and $a=\pm2$ would imply
$\gamma_{n+1}\in D_n^{1/2}$. Thus the extended family is strongly dissociated.
\end{proof}

\subsubsection{Finite two-torsion: a Bessel construction}

Assume first that $\widehat G[2]$ is finite and choose a strongly dissociated sequence
$\gamma_1,\gamma_2,\ldots$ as above. For $x\in G$ set
\[
  c_j(x):=\operatorname{Re}\gamma_j(x)
  =\frac12\bigl(\gamma_j(x)+\overline{\gamma_j(x)}\bigr),
  \qquad
  d_n:=\frac1n\sum_{j=1}^n c_j.
\]
Strong dissociation implies that the frequencies $\gamma_j$ and $\gamma_j^{-1}$ are
pairwise distinct, and hence
\[
  \norm{d_n}_{A(G)}=1.
\]
Put
\[
  r_n:=\log n,
  \qquad
  \tau_n:=\frac{\log n}{n},
  \qquad
  b_n:=\exp(i r_nd_n).
\]
Since $d_n$ is real-valued,
\[
  \abs{b_n(x)}=1
  \qquad(x\in G),
\]
and the power-series expansion of the exponential in the Banach algebra $A(G)$ gives
\begin{equation}
  \norm{b_n}_{A(G)}
  \le \exp\bigl(r_n\norm{d_n}_{A(G)}\bigr)=n.
  \label{eq:bn-upper-case1}
\end{equation}

Define
\[
  \beta_n:=\frac{1}{2(n+1)},
  \qquad
  \alpha_n:=\frac12+\beta_n=\frac{n+2}{2(n+1)},
\]
and
\begin{equation}
  f_n:=\alpha_n-\beta_nb_n.
  \label{eq:fn-case1}
\end{equation}
Then \eqref{eq:bn-upper-case1} gives
\[
  \norm{f_n}_{A(G)}
  \le\alpha_n+\beta_n\norm{b_n}_{A(G)}
  \le\alpha_n+n\beta_n=1,
\]
while the unimodularity of $b_n$ gives
\[
  \abs{f_n(x)}
  =\abs{\alpha_n-\beta_nb_n(x)}
  \ge\alpha_n-\beta_n
  =\frac12.
\]
It remains to prove that the inverse norms diverge.

Let
\[
  S_n:=\left\{
  \gamma_1^{\varepsilon_1}\cdots\gamma_n^{\varepsilon_n}:
  \varepsilon_j\in\{-1,0,1\}\right\}.
\]
By strong dissociation, every $\gamma\in S_n$ has a unique representation of this
form. Write
\[
  s(\gamma):=\#\{j:\varepsilon_j\ne0\}.
\]
Define a functional $\Lambda_n\in A(G)^*$ by
\[
  \Lambda_n(f)
  :=\sum_{\gamma\in\widehat G}\omega_n(\gamma)\widehat f(\gamma),
  \qquad
  \omega_n(\gamma):=
  \begin{cases}
    i^{-s(\gamma)},&\gamma\in S_n,\\
    0,&\gamma\notin S_n.
  \end{cases}
\]
Since $\abs{\omega_n}\le1$,
\begin{equation}
  \norm{\Lambda_n}\le1.
  \label{eq:lambda-contractive-case1}
\end{equation}
The functional $\Lambda_n$ cancels the phases in the low-frequency product
$Q_{n,u}$ defined below. Frequencies arising from the remaining Bessel terms are
controlled by the error estimate rather than by an exact phase cancellation.

We use the Jacobi--Anger expansion
\begin{equation}
  e^{iu\cos\theta}
  =\sum_{m\in\Z}i^mJ_m(u)e^{im\theta},
  \label{eq:jacobi-anger}
\end{equation}
where $J_m$ denotes the Bessel function of the first kind; see
\cite[Chapter~10, Section~10.12, in particular Eqs.~(10.12.1)--(10.12.3)]{NIST2010}.
Thus, for every character
$\gamma\in\widehat G$,
\[
  e^{iu\operatorname{Re}\gamma}
  =\sum_{m\in\Z}i^mJ_m(u)\gamma^m
  \quad\text{in }A(G).
\]
Consequently, for every integer $k\ge0$ and $u=k\tau_n$,
\begin{equation}
  b_n^k
  =\prod_{j=1}^n e^{iu\operatorname{Re}\gamma_j}.
  \label{eq:bn-power-factorization}
\end{equation}

For $\gamma\in\{\gamma_1,\ldots,\gamma_n\}$ and $0\le u\le1/2$ put
\[
  q_{\gamma,u}
  :=J_0(u)+iJ_1(u)(\gamma+\gamma^{-1}),
\]
and let
\[
  r_{\gamma,u}:=e^{iu\operatorname{Re}\gamma}-q_{\gamma,u}.
\]
Set
\[
  A(u):=J_0(u)+2J_1(u),
  \qquad
  E(u):=2\sum_{m\ge2}\abs{J_m(u)}.
\]
Then
\[
  \norm{q_{\gamma,u}}_{A(G)}
  =\abs{J_0(u)}+2\abs{J_1(u)},
  \qquad
  \norm{r_{\gamma,u}}_{A(G)}\le E(u).
\]
The elementary Bessel estimates needed below are
\begin{equation}
  E(u)\le\frac12u^2,
  \qquad
  A(u)\ge1,
  \qquad
  \log A(u)\ge u-u^2
  \quad(0\le u\le1/2).
  \label{eq:bessel-estimates}
\end{equation}
For completeness, we derive the quantitative bounds needed below from the standard
power series for $J_m$; see
\cite[Chapter~10, Section~10.2(ii), Eq.~(10.2.2)]{NIST2010}. It gives
\[
  \abs{J_m(u)}
  \le
  e^{u^2/4}\frac{(u/2)^m}{m!}.
\]
Hence, for $0\le u\le1/2$,
\[
  E(u)
  \le
  2e^{u^2/4}
  \sum_{m\ge2}\frac{(u/2)^m}{m!}
  \le
  2e^{1/16}\frac{u^2}{8}e^{u/2}
  \le\frac{u^2}{2}.
\]
Moreover,
\[
  J_0(u)\ge1-\frac{u^2}{4},
  \qquad
  J_1(u)\ge\frac u2-\frac{u^3}{16}.
\]
In particular, $J_0(u),J_1(u)\ge0$ on $[0,1/2]$, and hence
$\norm{q_{\gamma,u}}_{A(G)}=A(u)$. Moreover,
\[
  A(u)
  \ge
  1+u-\frac{u^2}{4}-\frac{u^3}{8}
  \ge1.
\]
Using $\log(1+t)\ge t-t^2/2$ for $0\le t\le1$ with
\[
  t=u-\frac{u^2}{4}-\frac{u^3}{8},
\]
we obtain
\[
  \log A(u)
  \ge
  t-\frac{t^2}{2}.
\]
A direct expansion gives
\[
  t-\frac{t^2}{2}-(u-u^2)
  =
  \frac{u^2}{128}
  \left(
    32+16u+12u^2-4u^3-u^4
  \right)\ge0
\]
for $0\le u\le1/2$, proving the last inequality in
\eqref{eq:bessel-estimates}.

Let
\[
  Q_{n,u}:=\prod_{j=1}^n q_{\gamma_j,u}.
\]
Strong dissociation makes the $\{-1,0,1\}$-representation of every frequency in
$Q_{n,u}$ unique. Therefore the phase correction in $\Lambda_n$ is exact:
\begin{equation}
  \Lambda_n(Q_{n,u})=A(u)^n.
  \label{eq:lambda-Q-exact}
\end{equation}
On the other hand, telescoping the product in \eqref{eq:bn-power-factorization} and
using \eqref{eq:bessel-estimates} gives
\begin{equation}
  \norm{b_n^k-Q_{n,u}}_{A(G)}
  \le nE(u)\bigl(A(u)+E(u)\bigr)^{n-1}.
  \label{eq:bessel-product-error}
\end{equation}
Consequently, whenever
\[
  0\le u\le\frac12,
  \qquad
  nu^2\le\frac14,
\]
we obtain
\begin{equation}
  \operatorname{Re}\Lambda_n(b_n^k)
  \ge\frac12A(u)^n.
  \label{eq:lambda-bn-lower-intermediate}
\end{equation}
Indeed, by \eqref{eq:bessel-estimates}, $nE(u)\le1/8$, and the quotient of the
right-hand side of \eqref{eq:bessel-product-error} by $A(u)^n$ is at most
\[
  \frac18e^{1/8}<\frac12.
\]

Choose positive integers $K_n$ such that
\begin{equation}
  K_n\longrightarrow\infty,
  \qquad
  \frac{K_n^2(\log n)^2}{n}\longrightarrow0.
  \label{eq:Kn-negative}
\end{equation}
For example, for all sufficiently large $n$ one may take
$K_n=\lfloor n^{1/4}\rfloor$.
For $0\le k\le K_n$ and $u=k\tau_n$ we then have, for all sufficiently large
$n$, $u\le1/2$ and $nu^2\le1/4$. Hence
\eqref{eq:lambda-bn-lower-intermediate} and \eqref{eq:bessel-estimates} yield
\begin{align}
  \operatorname{Re}\Lambda_n(b_n^k)
  &\ge\frac12\exp\bigl(n(u-u^2)\bigr)\notag\\
  &\ge \frac12e^{-1/4}n^k.
  \label{eq:lambda-bn-growth-case1}
\end{align}

We finally pass to the inverse. Since
\[
  q_n:=\frac{\beta_n}{\alpha_n}=\frac1{n+2},
  \qquad
  f_n=\alpha_n(1-q_nb_n),
\]
and $q_n\norm{b_n}_{A(G)}\le n/(n+2)<1$, the Neumann series gives
\[
  g_n:=(1-q_nb_n)^{-1}
  =\sum_{k=0}^{\infty}q_n^kb_n^k,
  \qquad
  f_n^{-1}=\alpha_n^{-1}g_n.
\]
For
\[
  S_{n,K}:=\sum_{k=0}^{K}q_n^kb_n^k,
\]
the exact identity
\[
  g_n=S_{n,K}+q_n^{K+1}b_n^{K+1}g_n
\]
gives
\[
  \norm{S_{n,K}}_{A(G)}
  \le
  \left(
    1+(q_n\norm{b_n}_{A(G)})^{K+1}
  \right)\norm{g_n}_{A(G)}
  \le2\norm{g_n}_{A(G)}.
\]
Since $\norm{\Lambda_n}\le1$ by \eqref{eq:lambda-contractive-case1},
\begin{equation}
  \norm{g_n}_{A(G)}
  \ge\frac12\abs{\Lambda_n(S_{n,K})}.
  \label{eq:G-lower-negative}
\end{equation}
Taking $K=K_n$ and using \eqref{eq:lambda-bn-growth-case1}, we obtain for all
sufficiently large $n$
\begin{align*}
  \abs{\Lambda_n(S_{n,K_n})}
  &\ge \frac12e^{-1/4}
  \sum_{k=0}^{K_n}\left(\frac{n}{n+2}\right)^k\\
  &\ge \frac14e^{-1/4}(K_n+1),
\end{align*}
where we used $K_n/n\to0$. Together with \eqref{eq:G-lower-negative} this shows
that
\[
  \norm{f_n^{-1}}_{A(G)}\longrightarrow\infty.
\]

\subsubsection{Infinite two-torsion: an exact dyadic construction}

Assume now that $\widehat G[2]$ is infinite and choose a sequence
$\gamma_1,\gamma_2,\ldots\in\widehat G[2]$ which is linearly independent over
$\mathbb F_2$. Set
\[
  d_n:=\frac1n\sum_{j=1}^n\gamma_j,
  \qquad
  r_n:=\log n,
  \qquad
  \tau_n:=\frac{\log n}{n},
  \qquad
  b_n:=\exp(i r_nd_n).
\]
Since $\gamma_j^2=1$, each $\gamma_j$ takes only the values $\pm1$, and therefore
for every $u\in\R$
\begin{equation}
  e^{iu\gamma_j}
  =\cos u+i\sin u\,\gamma_j.
  \label{eq:order-two-exponential}
\end{equation}
It follows that
\begin{equation}
  b_n
  =\prod_{j=1}^n
  \bigl(\cos\tau_n+i\sin\tau_n\,\gamma_j\bigr).
  \label{eq:bn-dyadic-product}
\end{equation}
As in the preceding case, $d_n$ is real-valued, hence
\[
  \abs{b_n(x)}=1
  \quad(x\in G),
  \qquad
  \norm{b_n}_{A(G)}\le n.
\]
With the same choices
\[
  \beta_n=\frac1{2(n+1)},
  \qquad
  \alpha_n=\frac12+\beta_n,
  \qquad
  f_n=\alpha_n-\beta_nb_n,
\]
we again obtain
\begin{equation}
  \norm{f_n}_{A(G)}\le1,
  \qquad
  \inf_{x\in G}\abs{f_n(x)}\ge\frac12.
  \label{eq:normalization-case2}
\end{equation}

Let
\[
  H_n:=\operatorname{span}_{\mathbb F_2}\{\gamma_1,\ldots,\gamma_n\}
  \subset\widehat G.
\]
Every $\gamma\in H_n$ has a unique representation
$\gamma=\prod_{j\in I}\gamma_j$. Define
\[
  \omega_n(\gamma):=i^{-\abs I}
  \quad(\gamma\in H_n),
  \qquad
  \omega_n(\gamma):=0
  \quad(\gamma\notin H_n),
\]
and
\[
  \Lambda_n(f)
  :=\sum_{\gamma\in\widehat G}
  \omega_n(\gamma)\widehat f(\gamma).
\]
Again $\norm{\Lambda_n}\le1$.

For $k\ge0$ and $u=k\tau_n$, formula \eqref{eq:order-two-exponential} gives the
exact factorization
\[
  b_n^k
  =\prod_{j=1}^n
  \bigl(\cos u+i\sin u\,\gamma_j\bigr).
\]
For $0\le u\le1/2$ the functional $\Lambda_n$ removes all phases, and linear
independence over $\mathbb F_2$ gives the exact identity
\begin{equation}
  \Lambda_n(b_n^k)=(\cos u+\sin u)^n.
  \label{eq:dyadic-exact}
\end{equation}
There is no Bessel tail in this case. The elementary bounds
\[
  \cos u\ge1-\frac{u^2}{2},
  \qquad
  \sin u\ge u-\frac{u^3}{6}
\]
give
\[
  \cos u+\sin u
  \ge
  1+t,
  \qquad
  t:=u-\frac{u^2}{2}-\frac{u^3}{6}.
\]
For $0\le u\le1/2$ we have $0\le t\le1$, and
$\log(1+t)\ge t-t^2/2$. Hence
\[
  \log(\cos u+\sin u)
  \ge
  t-\frac{t^2}{2}.
\]
A direct expansion shows
\[
  t-\frac{t^2}{2}-(u-2u^2)
  =
  \frac{u^2}{72}
  \left(
    72+24u+3u^2-6u^3-u^4
  \right)\ge0
\]
on $[0,1/2]$. Therefore
\begin{equation}
  \log(\cos u+\sin u)\ge u-2u^2
  \qquad(0\le u\le1/2).
  \label{eq:dyadic-log-bound}
\end{equation}
For $0\le k\le K_n$, where $K_n$ is given by \eqref{eq:Kn-negative}, we thus have
\begin{equation}
  \Lambda_n(b_n^k)
  \ge\exp\bigl(n(u-2u^2)\bigr)
  \ge e^{-1/2}n^k.
  \label{eq:dyadic-growth}
\end{equation}

The remainder of the argument is identical to the last part of the preceding case.
Indeed,
\[
  f_n=\alpha_n(1-q_nb_n),
  \qquad
  q_n=\frac1{n+2},
\]
and the Neumann series together with the finite-sum identity used in
\eqref{eq:G-lower-negative}, combined with $K_n\to\infty$, yields
\[
  \norm{f_n^{-1}}_{A(G)}\longrightarrow\infty.
\]

\begin{proof}[Proof of Theorem~\ref{thm:sharp-half-compact}]
Apply Lemma~\ref{lem:structural-dichotomy} to $\Gamma=\widehat G$. If
$\widehat G[2]$ is finite, the Bessel construction above gives a sequence satisfying
\eqref{eq:sharp-half-main}. If $\widehat G[2]$ is infinite, the dyadic construction
gives the same conclusion.
\end{proof}

\begin{corollary}
\label{cor:critical-threshold-compact}
For every infinite compact Abelian group $G$, the critical threshold for
norm-controlled inversion in $A(G)$ is exactly $1/2$: norm control holds for every
$\delta>1/2$ by Theorem~\ref{thm:uniform-fourier-inversion}, while it fails at
$\delta=1/2$ by Theorem~\ref{thm:sharp-half-compact}. Since the admissible class
only enlarges as $\delta$ decreases, failure at $1/2$ also implies failure for every
$0<\delta<1/2$.
\end{corollary}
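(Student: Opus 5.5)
The corollary combines three facts already in hand, so the plan is to verify each clause separately.

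First, for every $\delta>1/2$, Theorem~\ref{thm:uniform-fourier-inversion} gives $c_1(\delta;A(G),G)\le C_A(\delta)<\infty$. It applies directly: the visible set for $A(G)$ is $G$ itself, since the maximal ideal space of $\ell^1(\widehat G)$ is $G$. Hence
\[
  \delta_1(A(G),G)\le\inf\{\delta:\delta>1/2\}=1/2.
\]

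Second, at $\delta=1/2$, Theorem~\ref{thm:sharp-half-compact} produces $f_n$ with $\norm{f_n}_{A(G)}\le1$, $\inf_G\abs{f_n}\ge1/2$, and $\norm{f_n^{-1}}_{A(G)}\to\infty$. These $f_n$ are admissible in the supremum defining $c_1(1/2;A(G),G)$, so $c_1(1/2;A(G),G)=\infty$.

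Third, I use monotonicity in $\delta$. If $0<\delta<1/2$, then every $f$ admissible at level $1/2$, meaning $\inf\abs f\ge1/2$, also satisfies $\inf\abs f\ge\delta$. The admissible class therefore only enlarges as $\delta$ decreases, so
\[
  c_1(\delta)\ge c_1(1/2)=\infty .
\]
Thus $\{\delta:c_1(\delta)<\infty\}=(1/2,1]$, and its infimum is exactly $1/2$. The infimum is not attained.

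There is no real obstacle here. The only points to check are bookkeeping: that the hypothesis that $G$ is infinite is exactly what Theorem~\ref{thm:sharp-half-compact} requires, and that the monotonicity is stated in the correct direction.
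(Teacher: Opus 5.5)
Your proposal is correct and follows exactly the paper's route: you combine Theorem~\ref{thm:uniform-fourier-inversion} for every $\delta>1/2$ with Theorem~\ref{thm:sharp-half-compact} at $\delta=1/2$. The monotonicity of the admissible class then gives $c_1(\delta;A(G),G)=\infty$ for all $0<\delta\le 1/2$, so the critical constant is exactly $1/2$ and is not attained.
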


\subsection{Unitized group algebras on nondiscrete groups}
\label{subsec:negative-unitized-L1}

We now turn to the second sharpness mechanism, for the standard unitization of the
group algebra $L^1(H)$, where $H$ is a nondiscrete locally compact Abelian group. We
identify this unitization with
\[
  L^1(H)^\#=\C\delta_0\oplus_1 L^1(H)\subset M(H),
\]
so that
\[
  \norm{\lambda\delta_0+h}_{L^1(H)^\#}
  =\abs{\lambda}+\norm{h}_1.
\]
Here $\delta_0$ is the unit for convolution. The estimate above the critical threshold
is immediate in this setting. Indeed, if
\[
  \mu=\lambda\delta_0+h\in L^1(H)^\#,
  \qquad
  \norm{\mu}_{L^1(H)^\#}\le1,
\]
and
\[
  \inf_{\chi\in\widehat H}\abs{\widehat\mu(\chi)}\ge\delta>\frac12,
\]
then the Riemann--Lebesgue lemma, in the form
\cite[Theorem~1.2.4(a)]{Rudin1990}, gives $\widehat h\in C_0(\widehat H)$. Since
$H$ is nondiscrete, $\widehat H$ is noncompact by
\cite[Theorem~1.2.5]{Rudin1990}, and therefore
\[
  \abs{\lambda}
  =\lim_{\chi\to\infty}\abs{\lambda+\widehat h(\chi)}
  \ge\delta.
\]
The algebra $L^1(H)^\#$ splits at the unit, so
Lemma~\ref{lem:nikolski-splitting} immediately yields the following estimate.

\begin{proposition}
\label{prop:L1-positive-half}
Let $H$ be a nondiscrete locally compact Abelian group and let
$\mu\in L^1(H)^\#$ satisfy
\[
  \norm{\mu}_{L^1(H)^\#}\le1,
  \qquad
  \inf_{\chi\in\widehat H}\abs{\widehat\mu(\chi)}\ge\delta>\frac12.
\]
Then $\mu$ is invertible in $L^1(H)^\#$ and
\begin{equation}
  \norm{\mu^{-1}}_{L^1(H)^\#}
  \le\frac{1}{2\delta-1}.
  \label{eq:L1-positive-bound}
\end{equation}
\end{proposition}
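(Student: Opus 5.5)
The plan is to reduce everything to Lemma~\ref{lem:nikolski-splitting} via the observation already made just before the statement. Write $\mu=\lambda\delta_0+h$ with $h\in L^1(H)$.

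\emph{Step 1: the unit coefficient is at least $\delta$.} By the Riemann--Lebesgue property \cite[Theorem~1.2.4(a)]{Rudin1990}, $\widehat h\in C_0(\widehat H)$. Since $H$ is nondiscrete, $\widehat H$ is noncompact \cite[Theorem~1.2.5]{Rudin1990}, so $\chi\to\infty$ makes sense along a net leaving every compact set. Along such a net $\widehat\mu(\chi)=\lambda+\widehat h(\chi)\to\lambda$, while $\abs{\widehat\mu(\chi)}\ge\delta$ for every $\chi$. Hence
\[
  \abs\lambda\ge\delta>\tfrac12 .
\]

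\emph{Step 2: verify the splitting hypothesis.} The algebra $L^1(H)^\#=\C\delta_0\oplus_1L^1(H)$ splits at the unit by the very definition of its norm. Here $L^1(H)$ plays the role of $\mathcal A_0$, and $\mathcal A_0$ need not be a subalgebra beyond what the lemma requires; in fact $L^1(H)$ is a closed ideal. We also have
\[
  \tfrac12<\delta\le\abs\lambda\le\abs\lambda+\norm h_1=\norm\mu\le1 .
\]

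\emph{Step 3: conclude.} Lemma~\ref{lem:nikolski-splitting} with $\eta=\delta$ gives invertibility of $\mu$ in $L^1(H)^\#$ and $\norm{\mu^{-1}}\le(2\delta-1)^{-1}$. If one prefers a self-contained argument, the Neumann series for $\lambda\delta_0(\delta_0+\lambda^{-1}h)$ converges because
\[
  \norm{\lambda^{-1}h}\le(1-\abs\lambda)/\abs\lambda<1 .
\]
It produces an inverse in $L^1(H)^\#$, since $L^1(H)$ is an ideal, with norm at most $1/(\abs\lambda-\norm h_1)\le1/(2\abs\lambda-1)\le1/(2\delta-1)$.

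There is no real obstacle here. The only point requiring care is Step 1, namely that the limit at infinity of $\widehat\mu$ recovers $\lambda$. This uses nondiscreteness of $H$ exactly to guarantee that $\widehat H$ has a point at infinity.
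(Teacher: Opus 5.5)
Your proposal is correct and matches the paper's argument: the Riemann--Lebesgue lemma together with noncompactness of $\widehat H$ gives $\abs{\lambda}\ge\delta$, and then Lemma~\ref{lem:nikolski-splitting}, applied to the split algebra $L^1(H)^\#=\C\delta_0\oplus_1L^1(H)$, yields the bound $(2\delta-1)^{-1}$. Your optional Neumann-series check is also correct; it simply reproves that lemma in this setting.
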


Thus only sharpness at the endpoint $\delta=1/2$ remains to be proved. The
construction below is the locally compact counterpart of Subsection~\ref{subsec:negative-compact-fourier}. The
main new ingredient is a \emph{smearing procedure}: a finitely supported element
of the discrete group algebra of the underlying abstract group is replaced by an
$L^1$-function supported on small disjoint translates of a neighborhood of the
identity. This preserves exactly the values of the auxiliary functional on all
convolution powers that are used to force the inverse norm to grow.

\subsubsection{A finitely supported discrete block}

We first extract, by passing to Fourier coefficients, the finite block encoded in the
$A(G)$-constructions of Subsection~\ref{subsec:negative-compact-fourier}. If $\Gamma$ is an Abelian group,
$\Gamma_d$ denotes the same group with the discrete topology.

\begin{lemma}[Finitely supported discrete block]
\label{lem:finite-discrete-block-L1}
There is an absolute constant
\[
  c_*:=\frac{e^{-1/2}}8>0
\]
with the following property. Let $\Gamma$ be an infinite Abelian group, and let
$(K_n)$ be any sequence of positive integers satisfying \eqref{eq:Kn-negative}.
For every sufficiently large $n$ there exist a finitely supported element
$\nu_n\in\ell^1(\Gamma)$ and a function $\omega_n:\Gamma\to\C$ such that
$\abs{\omega_n}\le1$ and, with
\[
  \Lambda_n(a):=\sum_{\gamma\in\Gamma}\omega_n(\gamma)a(\gamma),
\]
one has
\begin{align}
  \norm{\nu_n}_1&\le n,
  \label{eq:finite-block-norm}\\
  \sup_{\chi\in\widehat{\Gamma_d}}
  \abs{\widehat{\nu_n}(\chi)}&\le1,
  \label{eq:finite-block-transform}\\
  \operatorname{Re}\Lambda_n(\nu_n^{*k})&\ge c_*n^k,
  \qquad 0\le k\le K_n.
  \label{eq:finite-block-powers}
\end{align}
Moreover, $\norm{\Lambda_n}\le1$ and $\Lambda_n(\delta_0)=1$.
\end{lemma}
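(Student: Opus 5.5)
The plan is to read off the block from the two compact-group constructions of Subsection~\ref{subsec:negative-compact-fourier}, applied to the compact group $G:=\widehat{\Gamma_d}$. Its dual is $\Gamma$, so $A(G)\cong\ell^1(\Gamma)$ isometrically, with pointwise multiplication corresponding to convolution. We set $\nu_n$ to be the Fourier coefficient sequence of a suitable truncation of $b_n$. The functional $\Lambda_n$ and the weight $\omega_n$ are taken exactly as in the corresponding case of Lemma~\ref{lem:structural-dichotomy}. In both cases $\omega_n$ is supported on $S_n$ (respectively $H_n$), satisfies $|\omega_n|\le1$, and has $\omega_n(0)=i^0=1$. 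This gives $\norm{\Lambda_n}\le1$ and $\Lambda_n(\delta_0)=1$ at once.

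In the infinite two-torsion case, $b_n=\prod_j(\cos\tau_n+i\sin\tau_n\gamma_j)$ is already a trigonometric polynomial, so we take $\nu_n:=\widehat{b_n}$. This is finitely supported. It satisfies $\norm{\nu_n}_1=\norm{b_n}_{A(G)}\le n$, and its Fourier transform is $b_n$, which is unimodular, giving \eqref{eq:finite-block-transform}. The bound \eqref{eq:dyadic-growth} gives $\Lambda_n(\nu_n^{*k})\ge e^{-1/2}n^k\ge c_*n^k$.

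In the finite two-torsion (Bessel) case, $b_n$ has infinite Fourier support, so we truncate. Replacing $\nu_n$ by $\widehat{b_n}$ restricted to a finite set would break \eqref{eq:finite-block-transform} and the norm bound only slightly, so the plan is to truncate in the exponent instead. Take $p_N:=\sum_{m=0}^{N}(ir_nd_n)^m/m!$, a trigonometric polynomial, and set $\nu_n:=\widehat{p_N}/(1+\varepsilon)$ with $\norm{b_n-p_N}_{A}\le\varepsilon$. Here $\varepsilon$ is tiny, fixed relative to $n$ and $K_n$, and $N$ is chosen large depending on $n$. Then
\[
\norm{\nu_n}_1\le\norm{p_N}_A\le e^{r_n}=n,
\]
and
\[
\sup|\widehat{\nu_n}|\le(1+\varepsilon)^{-1}\bigl(\norm{b_n}_\infty+\varepsilon\bigr)=1.
\]
For the powers, use $\norm{p_N^k-b_n^k}_A\le k\,n^{k-1}\varepsilon$ for $k\le K_n$, which follows since both factors have $A$-norm at most $n$. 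Combining this with $\norm{\Lambda_n}\le1$ and \eqref{eq:lambda-bn-growth-case1} gives
\[
\operatorname{Re}\Lambda_n(\nu_n^{*k})\ge(1+\varepsilon)^{-k}\Bigl(\tfrac12e^{-1/4}n^k-K_nn^{k-1}\varepsilon\Bigr).
\]
Choose $\varepsilon\le 1/K_n$ so that $(1+\varepsilon)^{-K_n}\ge e^{-1}$ and the error is negligible. The right-hand side is then at least $\tfrac14e^{-1/4}e^{-1}n^k$ for large $n$.

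The main obstacle is the constant: this case only gives about $\tfrac14e^{-5/4}$, which is less than $c_*=e^{-1/2}/8$. To fix this, choose $\varepsilon=\eta/K_n$ with small $\eta>0$. Then $(1+\varepsilon)^{-K_n}\ge e^{-\eta}$, and the lower bound becomes roughly $\tfrac12e^{-1/4}e^{-\eta}n^k-o(n^k)$. This exceeds $e^{-1/2}n^k/8$ once $\eta$ is small and $n$ is large. The remaining checks are routine: the validity range $u=k\tau_n\le1/2$ and $nu^2\le1/4$ for $k\le K_n$ with large $n$, which was already verified in that subsection, and finiteness of the support of the truncation.
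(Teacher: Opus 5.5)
Your proposal is correct and is essentially the paper's proof. The dyadic block is exactly $\mathop{\ast}_{j=1}^{n}(\cos\tau_n\delta_0+i\sin\tau_n\delta_{x_j})$, and in the Bessel case you, like the paper, truncate the exponential series and divide by $1+(\text{tail})$. The only difference is the truncation order: the paper truncates at order $n$ and uses that the tail $\eta_n$ decays faster than any power of $n$, so $K_n\eta_n\to0$ automatically, whereas you choose the order so that the tail is at most $\eta/K_n$, which works equally well. Your concern about the constant is also unfounded: with $\varepsilon\le1/K_n$ your own displayed estimate already gives about $\tfrac12e^{-5/4}n^k>c_*n^k$, so the factor $\tfrac14$ was an unnecessary loss, though your fix is harmless.
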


\begin{proof}
The two constructions are those of Subsection~\ref{subsec:negative-compact-fourier}, with one additional
truncation in the Bessel case in order to make the support finite.

If $\Gamma[2]$ is infinite, choose $x_1,\ldots,x_n\in\Gamma[2]$ linearly
independent over $\mathbb F_2$, put
\[
  \tau_n:=\frac{\log n}{n},
\]
and take
\begin{equation}
  \nu_n
  :=\mathop{\ast}_{j=1}^n
  \bigl(\cos\tau_n\,\delta_0+i\sin\tau_n\,\delta_{x_j}\bigr).
  \label{eq:finite-block-dyadic}
\end{equation}
This element is finitely supported. Since the subset sums of
$x_1,\ldots,x_n$ are distinct, the product formula gives
\[
  \norm{\nu_n}_1
  =(\cos\tau_n+\sin\tau_n)^n
  \le e^{n\tau_n}=n,
\]
which proves \eqref{eq:finite-block-norm} in this case. The phase-correcting
functional constructed in Subsection~\ref{subsec:negative-compact-fourier} gives, for $u=k\tau_n$,
\[
  \Lambda_n(\nu_n^{*k})=(\cos u+\sin u)^n,
\]
and the estimates leading to \eqref{eq:dyadic-growth} imply
\eqref{eq:finite-block-powers}. Finally, \eqref{eq:finite-block-transform}
follows from the same product formula.

Suppose now that $\Gamma[2]$ is finite. Choose a strongly dissociated family
$x_1,\ldots,x_n$ as in Lemma~\ref{lem:structural-dichotomy} and define
\[
  d_n:=\frac1{2n}\sum_{j=1}^n
  (\delta_{x_j}+\delta_{-x_j}),
  \qquad
  b_n:=\exp(i\log n\,d_n).
\]
The construction in Subsection~\ref{subsec:negative-compact-fourier} gives
\[
  \abs{\widehat b_n(\chi)}=1
  \quad(\chi\in\widehat{\Gamma_d}),
  \qquad
  \norm{b_n}_1\le n,
\]
and the corresponding phase-correcting functional satisfies a lower bound of
order $n^k$ for $0\le k\le K_n$. To obtain finite support, we truncate the exponential series and set
\[
  v_n:=\sum_{m=0}^{n}
  \frac{(i\log n)^m}{m!}\,d_n^{*m}
\]
and
\begin{equation}
  \eta_n:=\sum_{m=n+1}^{\infty}\frac{(\log n)^m}{m!}
  =n-\sum_{m=0}^{n}\frac{(\log n)^m}{m!}.
  \label{eq:eta-L1}
\end{equation}
Then
\[
  \norm{v_n-b_n}_1\le\eta_n,
  \qquad
  \eta_n
  \le2\left(\frac{e\log n}{n+1}\right)^{n+1}
\]
for all sufficiently large $n$. In particular, $\eta_n$ decays faster than any
negative power of $n$. Define the finitely supported element
\begin{equation}
  \nu_n:=\frac{v_n}{1+\eta_n}.
  \label{eq:finite-block-bessel}
\end{equation}
Since $\abs{\widehat b_n}=1$ and
$\norm{v_n-b_n}_1\le\eta_n$, we obtain
\eqref{eq:finite-block-transform}; moreover,
\[
  \norm{\nu_n}_1
  \le\frac{n+\eta_n}{1+\eta_n}\le n.
\]
For $1\le k\le K_n$, telescoping powers gives
\[
  \norm{v_n^{*k}-b_n^{*k}}_1
  \le k\eta_n(n+\eta_n)^{k-1}.
\]
After division by $n^k$,
\begin{equation}
  \frac{\norm{v_n^{*k}-b_n^{*k}}_1}{n^k}
  \le
  \frac{k\eta_n}{n}
  \left(1+\frac{\eta_n}{n}\right)^{k-1}
  \le
  \frac{K_n\eta_n}{n}
  \left(1+\frac{\eta_n}{n}\right)^{K_n}
  =o(1),
  \label{eq:finite-block-uniform-truncation}
\end{equation}
uniformly for $1\le k\le K_n$. Also
\[
  1-(1+\eta_n)^{-k}
  \le k\eta_n
  \le K_n\eta_n=o(1)
\]
uniformly in the same range. Using \eqref{eq:lambda-bn-growth-case1},
\begin{align*}
  \frac{\operatorname{Re}\Lambda_n(\nu_n^{*k})}{n^k}
  &=
  (1+\eta_n)^{-k}
  \frac{\operatorname{Re}\Lambda_n(v_n^{*k})}{n^k}\\
  &\ge
  (1+\eta_n)^{-k}
  \left(
    \frac12e^{-1/4}
    -
    \frac{\norm{v_n^{*k}-b_n^{*k}}_1}{n^k}
  \right).
\end{align*}
Since
\[
  \frac12e^{-1/4}>\frac{e^{-1/2}}8=c_*,
\]
the right-hand side is at least $c_*$ for every $1\le k\le K_n$ once $n$ is
sufficiently large. The case $k=0$ is immediate from
$\Lambda_n(\delta_0)=1$. This proves \eqref{eq:finite-block-powers} with the
stated constant.
\end{proof}

\subsubsection{Smearing the discrete block}

We now use the topology of $H$. The following lemma is the step that converts the
finitely supported discrete construction into an element of $L^1(H)$ without
losing the information carried by the first $K$ convolution powers.

\begin{lemma}[Exact smearing]
\label{lem:exact-smearing-L1}
Let $H$ be a nondiscrete locally compact Abelian group. Let $\nu$ be finitely
supported, set $E:=\supp\nu$, and write
\[
  \nu=\sum_{\gamma\in E}a_\gamma\delta_\gamma.
\]
Let $K\ge1$ be an integer and let $\omega:H\to\C$ satisfy
$\abs{\omega}\le1$. Then one can choose a compact symmetric neighborhood
$C$ of $0$ with positive Haar measure and put
\[
  u:=\frac{\mathbf1_C}{m_H(C)},
  \qquad
  h:=\nu*u
  =\frac1{m_H(C)}\sum_{\gamma\in E}
  a_\gamma\mathbf1_{\gamma+C}\in L^1(H),
\]
so that there is a functional
\[
  \widetilde\Lambda\in(L^1(H)^\#)^*,
  \qquad
  \norm{\widetilde\Lambda}\le1,
\]
for which
\begin{align}
  \norm h_1&=\norm\nu_1,
  \label{eq:smearing-norm-L1}\\
  \abs{\widehat h(\chi)}&\le\abs{\widehat\nu(\chi)}
  \qquad(\chi\in\widehat H),
  \label{eq:smearing-transform-L1}\\
  \widetilde\Lambda(h^{*k})
  &=\sum_{\gamma\in H}\omega(\gamma)\nu^{*k}(\{\gamma\}),
  \qquad 1\le k\le K,
  \label{eq:smearing-powers-L1}\\
  \widetilde\Lambda(\delta_0)&=1.
  \label{eq:smearing-unit-L1}
\end{align}
\end{lemma}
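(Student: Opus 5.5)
The plan is to choose $C$ so small that all the sets $\gamma+kC$, for $\gamma$ in the finite set $E_k:=kE=E+\dots+E$ ($k$ summands) and $1\le k\le K$, are pairwise disjoint for each fixed $k$. Here $kC$ denotes the $k$-fold sum $C+\dots+C$. The smallness condition will be made precise below. We then read off the coefficients of $\nu^{*k}$ by integrating $h^{*k}$ over these small cells.

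\textbf{Choice of $C$.} The finite set $F:=\bigcup_{k\le K}(E_k-E_k)\setminus\{0\}$ avoids $0$. Since $H$ is Hausdorff, there is an open neighbourhood $V$ of $0$ with $V\cap F=\emptyset$. By continuity of addition, choose a compact symmetric neighbourhood $C$ of $0$ (positive Haar measure is automatic, since $C$ has interior) such that $2KC\subset V$. Then for $\gamma\ne\gamma'$ in $E_k$ the sets $\gamma+kC$ and $\gamma'+kC$ are disjoint; otherwise $\gamma-\gamma'\in kC-kC=2kC\subset V$, contradicting $V\cap F=\emptyset$. Nondiscreteness is not needed for this step; it only guarantees that $u$ is a genuine $L^1$ function.

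\textbf{Norm and transform.} Disjointness for $k=1$ gives $\norm h_1=\sum_\gamma\abs{a_\gamma}=\norm\nu_1$. Next, $\widehat h=\widehat\nu\,\widehat u$ and $\abs{\widehat u}\le\norm u_1=1$, which gives \eqref{eq:smearing-transform-L1}.

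\textbf{The functional.} We have $h^{*k}=\nu^{*k}*u^{*k}$. Here $u^{*k}$ is a probability density supported in $kC$, and $\nu^{*k}=\sum_{\gamma\in E_k}c^{(k)}_\gamma\delta_\gamma$ with $c^{(k)}_\gamma=\nu^{*k}(\{\gamma\})$. Hence $h^{*k}=\sum_{\gamma\in E_k}c^{(k)}_\gamma\,u^{*k}(\cdot-\gamma)$, and the summands have disjoint supports $\gamma+kC$.

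Define, for $\lambda\delta_0+g\in L^1(H)^\#$,
\[
  \widetilde\Lambda(\lambda\delta_0+g):=\lambda+\int_H\phi\,g\,dm_H,
\]
with $\phi$ bounded by $1$ in modulus and to be specified. We need $\int\phi\,u^{*k}(\cdot-\gamma)=\omega(\gamma)$ for all $\gamma\in E_k$ and $1\le k\le K$. The natural choice is $\phi:=\omega(\gamma)$ on $\gamma+kC$. This is consistent across different $k$ only if the cells for different $k$ either do not overlap or carry equal values of $\omega$.

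This cross-$k$ consistency is the main obstacle. It is handled by strengthening the choice of $C$: take $F$ to be $\bigl(\bigcup_{k\le K}E_k-\bigcup_{k\le K}E_k\bigr)\setminus\{0\}$. Then cells $\gamma+kC$ and $\gamma'+k'C$ with $\gamma\ne\gamma'$ are disjoint for all $k,k'\le K$, since $\gamma-\gamma'\in kC-k'C\subset 2KC\subset V$ would be a contradiction. Cells with the same centre $\gamma$ all receive the same value $\omega(\gamma)$. Consequently
\[
  \phi:=\sum_{\gamma\in\bigcup_{k\le K}E_k}\omega(\gamma)\mathbf 1_{\gamma+KC}
\]
is well defined, satisfies $\abs\phi\le1$, and equals $\omega(\gamma)$ on each $\gamma+kC\subset\gamma+KC$. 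Here we use $0\in C$, which gives $kC\subset KC$.

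Therefore
\[
  \widetilde\Lambda(h^{*k})=\sum_\gamma c^{(k)}_\gamma\,\omega(\gamma),
\]
which is \eqref{eq:smearing-powers-L1}. Moreover $\widetilde\Lambda(\delta_0)=1$, and
\[
  \abs{\widetilde\Lambda(\lambda\delta_0+g)}\le\abs\lambda+\norm g_1,
\]
so $\norm{\widetilde\Lambda}\le1$.
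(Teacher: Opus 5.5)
Your proof is correct and is essentially the paper's argument: choose $C$ so that $(2K)C$ misses every nonzero difference of the finite set of centres $\bigcup_{k\le K}\supp(\nu^{*k})$ (your $\bigcup_{k\le K}kE$ is a harmless superset), and test against the function equal to $\omega(y)$ on the pairwise disjoint cells $y+KC$. One side remark needs correcting. Nondiscreteness is not what makes $u$ an $L^1$ function: for discrete $H$ one could take $C=\{0\}$. Rather, it makes $\C\delta_0\oplus_1L^1(H)\subset M(H)$ a genuine direct sum, so that $\widetilde\Lambda(\lambda\delta_0+g):=\lambda+\int_H\phi g\,dm_H$ is well defined.
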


\begin{proof}
Set
\[
  \mathcal F:=\bigcup_{k=1}^{K}\supp(\nu^{*k}),
  \qquad
  D:=(\mathcal F-\mathcal F)\setminus\{0\}.
\]
Both sets are finite, and $0\notin D$. For a positive integer $k$ and a subset
$C\subset H$, we use the standard additive notation
\[
  kC:=\underbrace{C+\cdots+C}_{k\text{ times}}.
\]
Choose a neighborhood $V$ of $0$ such that $V\cap D=\varnothing$. By continuity of
addition there exists a symmetric neighborhood $U$ of $0$ such that $(2K)U\subset V$.
By local compactness, $U$ contains a compact symmetric neighborhood $C$ of $0$. Then
$m_H(C)>0$ and
\begin{equation}
  (2K)C\cap D=\varnothing.
  \label{eq:smearing-separation-L1}
\end{equation}
Then the sets $y+KC$, $y\in\mathcal F$, are pairwise disjoint. In particular,
the translates $\gamma+C$, $\gamma\in E$, are disjoint, which gives
\eqref{eq:smearing-norm-L1}. Also
\[
  \widehat h=\widehat\nu\,\widehat u,
  \qquad
  \abs{\widehat u}\le\norm u_1=1,
\]
so \eqref{eq:smearing-transform-L1} follows.

For $1\le k\le K$ we have
\[
  h^{*k}
  =\sum_{y\in\supp(\nu^{*k})}
  \nu^{*k}(\{y\})u^{*k}(\,\cdot-y\,),
\]
and the support of $u^{*k}(\,\cdot-y\,)$ is contained in $y+kC\subset y+KC$.
Define
\[
  \Phi(\xi):=
  \begin{cases}
    \omega(y),&\xi\in y+KC\text{ for some }y\in\mathcal F,\\
    0,&\text{otherwise}.
  \end{cases}
\]
The sets $y+KC$, $y\in\mathcal F$, form a finite pairwise disjoint family of compact
sets. Hence $\Phi$ is Borel measurable, belongs to $L^\infty(H)$, is well-defined by
\eqref{eq:smearing-separation-L1}, and satisfies $\norm{\Phi}_\infty\le1$. Since $u\ge0$ and $\int_Hu\,dm_H=1$, one has
$\int_Hu^{*k}\,dm_H=1$. Therefore
\[
  \int_H\Phi(\xi)h^{*k}(\xi)\,dm_H(\xi)
  =\sum_y\omega(y)\nu^{*k}(\{y\}).
\]
Finally set
\[
  \widetilde\Lambda(\lambda\delta_0+g)
  :=\lambda+\int_H\Phi g\,dm_H.
\]
Because the unitization is the $\ell^1$-direct sum
$\C\delta_0\oplus_1L^1(H)$, this functional has norm one and satisfies
\eqref{eq:smearing-powers-L1}--\eqref{eq:smearing-unit-L1}.
\end{proof}

\subsubsection{The endpoint construction}

We can now prove sharpness. The construction is explicit up to the choice of the
small neighborhood $C_n$ in Lemma~\ref{lem:exact-smearing-L1}.

\begin{theorem}[Sharpness at $1/2$ for the unitized group algebra]
\label{thm:sharp-half-unitalized-L1}
Let $H$ be a nondiscrete locally compact Abelian group. There exists a sequence of
invertible elements $(\mu_n)\subset L^1(H)^\#$ such that
\begin{equation}
  \norm{\mu_n}_{L^1(H)^\#}\le1,
  \qquad
  \inf_{\chi\in\widehat H}\abs{\widehat\mu_n(\chi)}\ge\frac12,
  \qquad
  \norm{\mu_n^{-1}}_{L^1(H)^\#}\longrightarrow\infty.
  \label{eq:L1-sharp-main}
\end{equation}
\end{theorem}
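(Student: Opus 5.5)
The plan is to mimic the compact construction, with the discrete block of Lemma~\ref{lem:finite-discrete-block-L1} smeared into $L^1(H)$ by Lemma~\ref{lem:exact-smearing-L1}. Apply Lemma~\ref{lem:finite-discrete-block-L1} with $\Gamma$ the underlying abstract group of $H$; this group is infinite because $H$ is nondiscrete. With $K_n=\lfloor n^{1/4}\rfloor$, this gives finitely supported $\nu_n$ with $\norm{\nu_n}_1\le n$ and $\sup_{\chi\in\widehat{\Gamma_d}}\abs{\widehat{\nu_n}(\chi)}\le1$. It also gives a weight $\omega_n$ with $\abs{\omega_n}\le1$ and $\operatorname{Re}\Lambda_n(\nu_n^{*k})\ge c_*n^k$ for $0\le k\le K_n$. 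Every continuous character of $H$ is in particular a character of $\Gamma_d$, so $\abs{\widehat{\nu_n}(\chi)}\le1$ for all $\chi\in\widehat H$. Next apply Lemma~\ref{lem:exact-smearing-L1} with $K=K_n$ and $\omega=\omega_n$. This produces $h_n=\nu_n*u_n\in L^1(H)$ with $\norm{h_n}_1=\norm{\nu_n}_1\le n$ and $\abs{\widehat h_n}\le1$ on $\widehat H$. It also produces a functional $\widetilde\Lambda_n$ on $L^1(H)^\#$ of norm at most $1$ satisfying $\widetilde\Lambda_n(\delta_0)=1$ and $\widetilde\Lambda_n(h_n^{*k})=\Lambda_n(\nu_n^{*k})$ for $1\le k\le K_n$.

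Now set $\beta_n=1/(2(n+1))$ and $\alpha_n=1/2+\beta_n$, and define $\mu_n:=\alpha_n\delta_0-\beta_nh_n$. Then $\norm{\mu_n}\le\alpha_n+n\beta_n=1$. Moreover $\abs{\widehat\mu_n(\chi)}\ge\alpha_n-\beta_n=1/2$, because $\abs{\widehat h_n}\le1$; unimodularity is not needed. Write $\mu_n=\alpha_n(\delta_0-q_nh_n)$ with $q_n=1/(n+2)$. Since $q_n\norm{h_n}_1\le n/(n+2)<1$, the Neumann series shows that $\mu_n$ is invertible in $L^1(H)^\#$, with $g_n:=(\delta_0-q_nh_n)^{-1}=\sum_kq_n^kh_n^{*k}$ and $\mu_n^{-1}=\alpha_n^{-1}g_n$.

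For the lower bound I use the partial sum $S_{n}=\sum_{k=0}^{K_n}q_n^kh_n^{*k}$. The identity $g_n=S_n+q_n^{K_n+1}h_n^{*(K_n+1)}g_n$ gives $\norm{S_n}\le2\norm{g_n}$, exactly as in \eqref{eq:G-lower-negative}. Since $\norm{\widetilde\Lambda_n}\le1$, we get
\[
  2\norm{g_n}\ge\operatorname{Re}\widetilde\Lambda_n(S_n)
  =\sum_{k=0}^{K_n}q_n^k\operatorname{Re}\Lambda_n(\nu_n^{*k})
  \ge c_*\sum_{k=0}^{K_n}\Bigl(\frac{n}{n+2}\Bigr)^k.
\]
Because $K_n/n\to0$, the last sum is at least $(K_n+1)/2$ for large $n$, so it tends to infinity. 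As $\alpha_n\le1$, this yields $\norm{\mu_n^{-1}}\to\infty$.

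I expect the main obstacle to be ensuring that the functional's exact values survive for all $k\le K_n$ simultaneously, with $K_n$ growing. Lemma~\ref{lem:exact-smearing-L1} handles exactly this by choosing the neighborhood $C_n$ depending on $K_n$, so that point is already secured. The remaining subtlety is the transform bound. Here one must check that the restriction from $\widehat{\Gamma_d}$ to $\widehat H$ is legitimate: $\widehat{\nu_n}$ is evaluated on the subset $\widehat H\subset\widehat{\Gamma_d}$, and the factor $\abs{\widehat u_n}\le1$ only helps.
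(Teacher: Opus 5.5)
Your proposal is correct and follows the paper's proof essentially step for step. It uses the same discrete block and exact smearing with $K=K_n$, the same element $\mu_n=\alpha_n\delta_0-\beta_nh_n$, and the same lower bound via the truncated Neumann sum $S_{n,K_n}$ and the contractive functional $\widetilde\Lambda_n$, with your explicit choice $K_n=\lfloor n^{1/4}\rfloor$ and the observation that only $\abs{\widehat h_n}\le1$ is needed both consistent with the paper.
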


\begin{proof}
The underlying abstract group of $H$ is infinite. Apply
Lemma~\ref{lem:finite-discrete-block-L1} to obtain $\nu_n$, $\omega_n$ and
$K_n$, and then apply Lemma~\ref{lem:exact-smearing-L1} with $K=K_n$. We obtain
$h_n\in L^1(H)$ and a contractive functional $\widetilde\Lambda_n$ satisfying
\begin{equation}
  \norm{h_n}_1\le n,
  \qquad
  \sup_{\chi\in\widehat H}\abs{\widehat h_n(\chi)}\le1,
  \qquad
  \operatorname{Re}\widetilde\Lambda_n(h_n^{*k})\ge c_*n^k
  \quad(0\le k\le K_n).
  \label{eq:hn-data-L1}
\end{equation}
The second estimate uses the fact that the continuous characters of $H$ are, in
particular, abstract characters of $H_d$.

Put
\[
  \beta_n:=\frac1{2(n+1)},
  \qquad
  \alpha_n:=\frac12+\beta_n=\frac{n+2}{2(n+1)},
\]
and define
\begin{equation}
  \mu_n:=\alpha_n\delta_0-\beta_n h_n.
  \label{eq:mu-L1-construction}
\end{equation}
Then, by \eqref{eq:hn-data-L1},
\[
  \norm{\mu_n}_{L^1(H)^\#}
  =\alpha_n+\beta_n\norm{h_n}_1
  \le\alpha_n+n\beta_n=1,
\]
while for every $\chi\in\widehat H$,
\[
  \abs{\widehat\mu_n(\chi)}
  \ge\alpha_n-\beta_n\abs{\widehat h_n(\chi)}
  \ge\alpha_n-\beta_n
  =\frac12.
\]
Thus $\mu_n$ is admissible at the endpoint $\delta=1/2$.

It remains to estimate the inverse from below. Write
\[
  \mu_n=\alpha_n(\delta_0-q_nh_n),
  \qquad
  q_n:=\frac{\beta_n}{\alpha_n}=\frac1{n+2}.
\]
Since
\[
  q_n\norm{h_n}_1\le\frac{n}{n+2}<1,
\]
$\mu_n$ is invertible and
\[
  G_n:=(\delta_0-q_nh_n)^{-1}
  =\sum_{k=0}^{\infty}q_n^kh_n^{*k}.
\]
For a finite $K$ set
\[
  S_{n,K}:=\sum_{k=0}^{K}q_n^kh_n^{*k}.
\]
Rather than applying $\widetilde\Lambda_n$ directly to the infinite Neumann
series, we use the exact resolvent identity
\[
  G_n
  =S_{n,K}+q_n^{K+1}h_n^{*(K+1)}*G_n.
\]
Consequently,
\[
  \norm{S_{n,K}}
  \le\left(1+(q_n\norm{h_n}_1)^{K+1}\right)\norm{G_n}
  \le2\norm{G_n},
\]
and hence
\begin{equation}
  \norm{G_n}
  \ge\frac12\abs{\widetilde\Lambda_n(S_{n,K})}.
  \label{eq:G-lower-L1}
\end{equation}
Take $K=K_n$. From \eqref{eq:hn-data-L1},
\begin{align*}
  \abs{\widetilde\Lambda_n(S_{n,K_n})}
  &\ge\operatorname{Re}\widetilde\Lambda_n(S_{n,K_n})\\
  &\ge c_*\sum_{k=0}^{K_n}q_n^kn^k\\
  &=c_*\sum_{k=0}^{K_n}
  \left(\frac{n}{n+2}\right)^k.
\end{align*}
Since $K_n/n\to0$, for all sufficiently large $n$ every term in the last sum
is at least $1/2$. Therefore
\[
  \abs{\widetilde\Lambda_n(S_{n,K_n})}
  \ge\frac{c_*}{2}(K_n+1).
\]
Combining this with \eqref{eq:G-lower-L1},
$\mu_n^{-1}=\alpha_n^{-1}G_n$, and $K_n\to\infty$, we obtain
\[
  \norm{\mu_n^{-1}}_{L^1(H)^\#}\longrightarrow\infty.
\]
\end{proof}

\begin{corollary}
\label{cor:critical-threshold-unitalized-L1}
For every nondiscrete locally compact Abelian group $H$, the critical threshold for
norm-controlled inversion in $L^1(H)^\#$, measured on the dual group $\widehat H$,
is exactly $1/2$. More precisely, Proposition~\ref{prop:L1-positive-half} gives
norm control for every $\delta>1/2$, whereas
Theorem~\ref{thm:sharp-half-unitalized-L1} shows that no uniform control is
possible at $\delta=1/2$.
\end{corollary}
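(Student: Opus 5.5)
This corollary simply combines two results already proved, so the plan is short.

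The first half, norm control above $1/2$, is Proposition~\ref{prop:L1-positive-half}. If $\mu=\lambda\delta_0+h$ has norm at most $1$ and visible infimum $\delta>1/2$, then Riemann--Lebesgue together with noncompactness of $\widehat H$ gives $\abs\lambda\ge\delta$. Since $L^1(H)^\#$ splits at the unit, Lemma~\ref{lem:nikolski-splitting} yields $c_1(\delta;L^1(H)^\#,\widehat H)\le(2\delta-1)^{-1}<\infty$. Hence $\delta_1(L^1(H)^\#,\widehat H)\le1/2$.

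The second half, failure at the endpoint, is Theorem~\ref{thm:sharp-half-unitalized-L1}. It produces admissible $\mu_n$ with norm at most $1$, visible infimum at least $1/2$, and $\norm{\mu_n^{-1}}\to\infty$. This shows $c_1(1/2)=\infty$.

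It remains to pass from the endpoint to the whole interval below it. For $0<\delta<1/2$ the admissible class only enlarges as $\delta$ decreases, so $c_1$ is non-increasing in $\delta$, and therefore $c_1(\delta)\ge c_1(1/2)=\infty$. Hence the infimum defining $\delta_1$ is exactly $1/2$, and it is not attained.

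No step is a real obstacle. The only point needing care is the monotonicity remark: one should check that the $\mu_n$ also satisfy $\inf\abs{\widehat\mu_n}\ge\delta$ for every $\delta\le1/2$, which is trivial since their visible infimum is at least $1/2$.
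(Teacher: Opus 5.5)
Your proposal is correct and is the same argument the paper uses. Proposition~\ref{prop:L1-positive-half} gives $c_1(\delta;L^1(H)^\#,\widehat H)\le(2\delta-1)^{-1}$ for $\delta>1/2$, and Theorem~\ref{thm:sharp-half-unitalized-L1} gives $c_1(1/2)=\infty$; monotonicity of the admissible class, the same remark made in Corollary~\ref{cor:critical-threshold-compact}, then extends the failure to every $0<\delta<1/2$.
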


\begin{remark}
For a discrete group $H$, the unit is already contained in $L^1(H)=\ell^1(H)$.
Since $\widehat H$ is compact by \cite[Theorem~1.2.5]{Rudin1990}, the Fourier
transform identifies $L^1(H)$ isometrically with $A(\widehat H)$; thus this case is
exactly the Fourier-algebra problem treated in Subsection~\ref{subsec:negative-compact-fourier}. Consequently, in the
infinite cases, Corollaries~\ref{cor:critical-threshold-compact} and
\ref{cor:critical-threshold-unitalized-L1} together cover both sides of the
compact/discrete duality. The threshold and the sharp upper majorant
$(2\delta-1)^{-1}$ for the nondiscrete unitized group algebra already appear in
\cite{Nikolski1999}; the point of the construction above is that the failure at the
endpoint is exhibited by an explicit sequence obtained from finite discrete blocks
and exact smearing.
\end{remark}

\section{Further remarks and open problems}
\label{sec:further-questions}

\begin{enumerate}

\item \textbf{Effective bounds in the full range.}
The compactness arguments in the proofs of
Theorems~\ref{thm:uniform-fourier-inversion} and
\ref{thm:uniform-measure-inversion} show that group-independent bounds exist for
every $\delta>1/2$, but they do not make those bounds effective throughout the
full range. Can one give an explicit upper bound, depending only on $\delta$, for
$\norm{f^{-1}}_{A(K)}$ whenever $K$ is compact Abelian,
$\norm f_{A(K)}\le1$, and $\inf_K\abs f\ge\delta$, and similarly an explicit
upper bound for $\norm{\mu^{-1}}_{M(G)}$ uniformly over all locally compact Abelian
groups $G$ under the assumptions of Theorem~B? This should be distinguished from
the explicit estimates available on narrower ranges: Nikolski's bound
$(2\delta^2-1)^{-1}$ applies universally for $\delta>1/\sqrt2$, and in the connected
Fourier-algebra case Theorem~\ref{thm:inverse-bound-connected} gives an explicit
bound for $\delta>2-\sqrt2$. Obtaining effective estimates in the newly covered
part of the range $\delta>1/2$ is a natural quantitative problem.

\item \textbf{The critical level for a large Fourier coefficient on the circle.}
Subsection~\ref{subsec:largest-coefficient} shows that sufficiently strong pointwise
separation from zero forces a large Fourier coefficient, while
Example~\ref{ex:no-large-fourier-coefficient} shows that this mechanism cannot by
itself reach the inversion threshold $1/2$. We formulate the corresponding extremal
question here specifically for the classical Wiener algebra on the circle. Let
$\delta_{\mathrm{fc}}(\T)$ be the infimum of those $1/2<\delta\le1$ for which every
$f\in A(\T)$ satisfying
\[
  \norm{f}_{A(\T)}\le1,
  \qquad
  \inf_{z\in\T}\abs{f(z)}\ge\delta
\]
necessarily has a Fourier coefficient of modulus strictly greater than $1/2$, that is,
$\norm{\fhat}_{\infty}>1/2$. Theorem~\ref{thm:improved-largest-coefficient} gives the
upper bound $\delta_{\mathrm{fc}}(\T)\le 2-\sqrt2$.

The lower bound can be improved by optimizing the parameter in the family from
Example~\ref{ex:no-large-fourier-coefficient}. For $0<a<1$ and $z\in\T$, put
\[
  B_a(z):=\frac{z-a}{1-az}
  =-a+(1-a^2)\sum_{n=1}^{\infty}a^{n-1}z^n,
  \qquad
  f_a(z):=\frac{B_a(z)}{1+2a}.
\]
Since $\abs{B_a}=1$ on $\T$ and
$\norm{B_a}_{A(\T)}=a+(1-a^2)/(1-a)=1+2a$, we have
\[
  \norm{f_a}_{A(\T)}=1,
  \qquad
  \inf_{z\in\T}\abs{f_a(z)}=\frac{1}{1+2a},
  \qquad
  \norm{\widehat{f_a}}_\infty
  =\frac{\max\{a,1-a^2\}}{1+2a}.
\]
The inequality $a/(1+2a)<1/2$ is automatic, whereas
\[
  \frac{1-a^2}{1+2a}\le\frac12
  \quad\Longleftrightarrow\quad
  2a^2+2a-1\ge0
  \quad\Longleftrightarrow\quad
  a\ge a_*:=\frac{\sqrt3-1}{2}.
\]
As $a\mapsto(1+2a)^{-1}$ is decreasing, the largest separation from zero in this
normalized one-factor family under the constraint
$\norm{\widehat{f_a}}_\infty\le1/2$ is attained at $a=a_*$ and equals
\[
  \inf_{\T}\abs{f_{a_*}}=\frac1{\sqrt3},
  \qquad
  \norm{\widehat{f_{a_*}}}_\infty=\frac12.
\]
For $a>a_*$ the largest coefficient is strictly smaller than $1/2$, and the
corresponding lower bounds approach $1/\sqrt3$ as $a\downarrow a_*$. Consequently,
\[
  \frac1{\sqrt3}\le \delta_{\mathrm{fc}}(\T)\le 2-\sqrt2.
\]
Thus, already for $A(\T)$, a direct dominant-coefficient argument applied to the
original function cannot work at or below $1/\sqrt3$, even though norm-controlled
inversion holds throughout the range $\delta>1/2$. The estimates above leave a gap between $1/\sqrt3$ and $2-\sqrt2$ for
$\delta_{\mathrm{fc}}(\T)$. Determining its exact value is a natural next question.
We emphasize that this formulation is deliberately restricted to the circle. For
disconnected compact Abelian groups the situation may be entirely different, and the
corresponding large-coefficient threshold should be regarded as a separate problem.

\item \textbf{Fourier and Fourier--Stieltjes algebras of non-Abelian groups.}
For a general, not necessarily Abelian, locally compact group $G$, it is natural to ask
for analogues of the results of this paper for the Fourier algebra $A(G)$ and the
Fourier--Stieltjes algebra $B(G)$. These algebras are still commutative as function
algebras under pointwise multiplication; the non-Abelian feature lies in the
underlying group and in the representation-theoretic structure of the algebras.
Some first results in this direction were obtained in
\cite[Section~3]{OhryskoWasilewski2020}: for $B(G)$, qualitative inversion was proved
above the level $1/2$, and norm-controlled inversion was obtained for
$\delta>1/\sqrt2$. It is natural to ask whether the threshold $1/2$ is also critical
for norm-controlled inversion in $B(G)$ and to understand the corresponding problem
for Fourier algebras of compact non-Abelian groups.

\item \textbf{Norm-controlled solutions of B\'ezout equations.}
Following \cite[Definition~0.2.3 and Subsection~0.2.4]{Nikolski1999}, consider
$(\mathcal A,X)=(A(K),K)$ for a compact Abelian group $K$, or
$(\mathcal A,X)=(M(G),\widehat G)$ for a locally compact Abelian group $G$.
For $n\ge2$ and $\mathbf f=(f_1,\ldots,f_n)\in\mathcal A^n$, use the normalization
\[
  \norm{\mathbf f}_{\mathcal A^n}
  :=\left(\sum_{j=1}^n\norm{f_j}_{\mathcal A}^2\right)^{1/2}\le1,
  \qquad
  \inf_{x\in X}\left(\sum_{j=1}^n\abs{f_j(x)}^2\right)^{1/2}\ge\delta.
\]
Here, in the measure-algebra case, $f_j(x)$ means the Fourier--Stieltjes transform
evaluated at $x\in\widehat G$. The problem is to solve
\[
  \sum_{j=1}^n f_jg_j=e,
  \qquad \mathbf g=(g_1,\ldots,g_n)\in\mathcal A^n,
\]
with a bound on $\norm{\mathbf g}_{\mathcal A^n}$ depending only on $\delta$ and
$n$. The unit $e$ is $1$ in $A(K)$ and $\delta_0$ in $M(G)$, where the product
is convolution.
For $0<\delta\le1$, let $c_n(\delta;\mathcal A,X)$ be the supremum, over all
such data $\mathbf f$, of
\[
  \inf\left\{\norm{\mathbf g}_{\mathcal A^n}:
       \mathbf g\in\mathcal A^n,\ \sum_{j=1}^n f_jg_j=e\right\},
\]
where the infimum is $+\infty$ if no solution exists, and set
\[
  \delta_n(\mathcal A,X)
  :=\inf\{0<\delta\le1:c_n(\delta;\mathcal A,X)<\infty\}.
\]
Nikolski's estimate \cite[Theorem~2.1.1]{Nikolski1999} gives, for both classes,
\[
  \sup_{n\ge1}c_n(\delta;\mathcal A,X)
  \le\frac{1}{2\delta^2-1},
  \qquad \delta>\frac1{\sqrt2},
\]
uniformly in the group. Padding scalar data with zeros shows that
$c_n\ge c_1$; hence, for infinite $K$ or $G$, respectively, the endpoint
obstruction from Section~\ref{sec:sharpness} also applies to these equations
for every $n$.

Does the conclusion of Theorems~A and~B extend to every fixed $n\ge2$ with the
same threshold $1/2$, with bounds independent of the underlying group? In
particular, is $\delta_n(A(K),K)=1/2$ for every infinite compact Abelian $K$, and
is $\delta_n(M(G),\widehat G)=1/2$ for every infinite locally compact Abelian $G$?
More strongly, for each $\delta>1/2$, can the solution bounds be chosen
independently of both the group and $n$? In Nikolski's terminology this asks for
complete $\delta$-visibility with a group-independent majorant. The usual
reduction to $\sum_j f_jf_j^*$, using complex conjugation for $A(K)$ and the
measure involution for $M(G)$, replaces $\delta$ by $\delta^2$
\cite[Lemma~1.4.2]{Nikolski1999}; thus this reduction does not settle the
range $1/2<\delta\le1/\sqrt2$.

\end{enumerate}

\section*{Use of artificial intelligence}
This work was supported by OpenAI through the ChatGPT for Academic Researchers program.

AI-assisted tools were used during the preparation of this manuscript for mathematical
verification and editorial support. The author has independently verified the content
and takes full responsibility for its accuracy and conclusions.


\begin{thebibliography}{99}

\bibitem{Arens1963}
R.~Arens,
\emph{The group of invertible elements of a commutative Banach algebra},
Studia Math., Special Series \textbf{1} (1963), 21--23.

\bibitem{Beurling1939}
A.~Beurling,
\emph{Sur les int\'egrales de Fourier absolument convergentes et leur application
\`a une transformation fonctionnelle},
in \emph{Neuvi\`eme Congr\`es des Math\'ematiciens Scandinaves
(Helsingfors, 1938)}, Mercators Tryckeri, Helsingfors, 1939, 345--366.

\bibitem{Cohen1961}
P.~J.~Cohen,
\emph{A note on constructive methods in Banach algebras},
Proc. Amer. Math. Soc. \textbf{12} (1961), no.~1, 159--163.

\bibitem{Gelfand1941}
I.~Gelfand,
\emph{Normierte Ringe},
Rec. Math. [Mat. Sbornik] N.S. \textbf{9(51)} (1941), no.~1, 3--24.

\bibitem{GlicksbergWik1972}
I.~Glicksberg and I.~Wik,
\emph{The range of Fourier--Stieltjes transforms of parts of measures},
in D.~Gulick and R.~L.~Lipsman (eds.), \emph{Conference on Harmonic Analysis
(College Park, Maryland, 1971)}, Lecture Notes in Mathematics, vol.~266,
Springer, Berlin--Heidelberg, 1972, 73--77.

\bibitem{Gorin1970}
E.~A.~Gorin,
\emph{A function-algebra variant of a theorem of Bohr--van Kampen},
Math. USSR-Sb. \textbf{11} (1970), no.~2, 233--243.

\bibitem{GrochenigKlotz2013}
K.~Gr\"ochenig and A.~Klotz,
\emph{Norm-controlled inversion in smooth Banach algebras, I},
J. Lond. Math. Soc. (2) \textbf{88} (2013), no.~1, 49--64.

\bibitem{Graham1974}
C.~C.~Graham,
\emph{A Riesz product proof of the Wiener--Pitt theorem},
Proc. Amer. Math. Soc. \textbf{44} (1974), no.~2, 312--314.

\bibitem{Kaniuth2009}
E.~Kaniuth,
\emph{A Course in Commutative Banach Algebras},
Graduate Texts in Mathematics, vol.~246, Springer, New York, 2009.

\bibitem{Newman1975}
D.~J.~Newman,
\emph{A simple proof of Wiener's $1/f$ theorem},
Proc. Amer. Math. Soc. \textbf{48} (1975), no.~1, 264--265.

\bibitem{Nikolskaia2001}
L.~N.~Nikolskaia,
\emph{Estimates of inverses and finite section inverses of Toeplitz operators on the Wiener algebra},
Acta Sci. Math. (Szeged) \textbf{67} (2001), no.~1--2, 395--409.

\bibitem{Nikolski1999}
N.~Nikolski,
\emph{In search of the invisible spectrum},
Ann. Inst. Fourier (Grenoble) \textbf{49} (1999), no.~6, 1925--1998.

\bibitem{Ohrysko2026}
P.~Ohrysko,
\emph{Inversion problem in algebras of integrable functions with summable Fourier transforms},
Math. Nachr. (2026), e70240.

\bibitem{OhryskoSandersWojciechowski2026}
P.~Ohrysko, T.~Sanders, and M.~Wojciechowski,
\emph{Wiener--Pitt sets for compact Abelian groups},
J. Fourier Anal. Appl. \textbf{32} (2026), no.~2, Paper No.~30.

\bibitem{OhryskoWasilewski2020}
P.~Ohrysko and M.~Wasilewski,
\emph{Inversion problem in measure and Fourier--Stieltjes algebras},
J. Funct. Anal. \textbf{278} (2020), no.~5, 108399.

\bibitem{NIST2010}
F.~W.~J.~Olver, D.~W.~Lozier, R.~F.~Boisvert, and C.~W.~Clark (eds.),
\emph{NIST Handbook of Mathematical Functions},
Cambridge University Press, New York, 2010.

\bibitem{Rudin1990}
W.~Rudin,
\emph{Fourier Analysis on Groups},
Wiley Classics Library, John Wiley \& Sons, Inc., New York, 1990,
reprint of the 1962 original.

\bibitem{SameiShepelska2019}
E.~Samei and V.~Shepelska,
\emph{Norm-controlled inversion in weighted convolution algebras},
J. Fourier Anal. Appl. \textbf{25} (2019), no.~6, 3018--3044.

\bibitem{Shapiro1979}
H.~S.~Shapiro,
\emph{A counterexample in harmonic analysis},
Banach Center Publ. \textbf{4} (1979), no.~1, 233--236.

\bibitem{Shreider1950}
Yu.~A.~\v{S}re\u{\i}der,
\emph{The structure of maximal ideals in rings of measures with convolution},
Mat. Sb. (N.S.) \textbf{27(69)} (1950), no.~2, 297--318;
English transl., Amer. Math. Soc. Translation (1953), no.~81, 28~pp.

\bibitem{Stafney1967}
J.~D.~Stafney,
\emph{An unbounded inverse property in the algebra of absolutely convergent Fourier series},
Proc. Amer. Math. Soc. \textbf{18} (1967), no.~3, 497--498.

\bibitem{Wiener1932}
N.~Wiener,
\emph{Tauberian theorems},
Ann. of Math. (2) \textbf{33} (1932), no.~1, 1--100.

\bibitem{WienerPitt1938}
N.~Wiener and H.~R.~Pitt,
\emph{On absolutely convergent Fourier--Stieltjes transforms},
Duke Math. J. \textbf{4} (1938), no.~2, 420--436.

\bibitem{Williamson1959}
J.~H.~Williamson,
\emph{A theorem on algebras of measures on topological groups},
Proc. Edinburgh Math. Soc. (2) \textbf{11} (1959), no.~4, 195--206.

\bibitem{Zelazko1973}
W.~\.{Z}elazko,
\emph{Banach Algebras},
translated from the Polish by M.~E.~Kuczma, Elsevier Publishing Co.,
Amsterdam--London--New York, and PWN--Polish Scientific Publishers,
Warsaw, 1973.

\end{thebibliography}
\end{document}